\documentclass[11pt,letterpaper]{article}
\usepackage[margin = 1in]{geometry}

\usepackage{lipsum}
\usepackage{amsfonts}
\usepackage{graphicx}
\graphicspath{ {images/} }
\usepackage{epstopdf}
\usepackage{dsfont}
\usepackage{mathtools}
\usepackage{empheq}
\usepackage{amsfonts}
\usepackage{amsgen,amsthm,amsmath,amstext,amsbsy,amsopn,amssymb,subcaption,stmaryrd,mathabx,mathrsfs}
\usepackage{comment}
\usepackage[font=footnotesize,labelfont=bf]{caption}

\usepackage{blkarray}
\usepackage{url}
\usepackage{longtable}
\usepackage{mathtools}
\usepackage{multirow}
\usepackage{array}

\usepackage{enumitem}
\usepackage{hyperref}
\usepackage{natbib}
\usepackage{booktabs}
\usepackage{algorithm}
\usepackage{algpseudocode}
\algrenewcommand\algorithmiccomment[1]{\hfill\texttt{//~#1}}
\usepackage{xcolor}
\usepackage[utf8]{inputenc} 
\usepackage[T1]{fontenc}

\ifpdf
  \DeclareGraphicsExtensions{.eps,.pdf,.png,.jpg}
\else
  \DeclareGraphicsExtensions{.eps}
\fi

\newcommand{\cF}{\mathcal{F}}

\newcommand{\cI}{\mathcal{I}}

\newcommand{\cL}{\mathcal{L}}

\newcommand{\cP}{\mathcal{P}}

\newcommand{\cR}{\mathcal{R}}
\newcommand{\cS}{\mathcal{S}}

\newcommand{\cW}{\mathcal{W}}
\newcommand{\cX}{\mathcal{X}}

\newcommand{\bbC}{\mathbb{C}}
\newcommand{\bbD}{\mathbb{D}}

\newcommand{\bbN}{\mathbb{N}}

\newcommand{\bbQ}{\mathbb{Q}}
\newcommand{\bbR}{\mathbb{R}}

\newcommand{\bbZ}{\mathbb{Z}}

\DeclarePairedDelimiter\ceil{\lceil}{\rceil}
\DeclarePairedDelimiter\floor{\lfloor}{\rfloor}
\DeclarePairedDelimiter\abs{\lvert}{\rvert}
\DeclarePairedDelimiter\norm{\lVert}{\rVert}

\newcommand{\dif}{\,\mathrm{d}}
\newcommand{\im}{\mathrm{i}}
\newcommand{\iid}{\overset{\mathrm{iid.}}{\sim}}
\newcommand{\define}{\overset{\mathrm{def}}{=}}

\DeclareMathOperator{\E}{\mathbb{E}} 
\newcommand{\indi}{{\mathds{1}}}
\newcommand{\Arg}{{\mathrm{Arg}}}

\DeclareMathOperator*{\argmax}{\arg\max}

\DeclareMathOperator\arctanh{arctanh}
\DeclareMathOperator\arccosh{arccosh}
\DeclareMathOperator\Log{Log}

\newtheorem{Theorem}{Theorem}

\newtheorem{Lemma}{Lemma}

\newtheorem{Proposition}{Proposition}

\newtheorem{Claim}{Claim}

\DeclareMathAlphabet\mathbfcal{OMS}{cmsy}{b}{n}

\usepackage{bbm}

\hypersetup{
    colorlinks=true,
    linkcolor=blue,
    filecolor=blue,      
    urlcolor=cyan,
    citecolor=blue
}

\makeatletter
\newcommand*{\rom}[1]{\expandafter\@slowromancap\romannumeral #1@}
\makeatother

\allowdisplaybreaks

\begin{document}
	\title{Sample Complexities of Estimating Gumbel--Max Watermark Proportions with and without Reduction to Pivotal Statistics
    }
    
	\author{
    Shuwen Chai\thanks{Equal contribution. Authors are listed in alphabetical order.}\\
    Northwestern University
    \and
    Qiaosen Wang\footnotemark[1]\\
    University of Chicago
}
	\date{\today}
	\maketitle

\begin{abstract}
    Watermarking promises statistical traceability of large language model (LLM) uses, but real documents rarely arrive as purely human-written or purely LLM-generated. This motivates a quantitative question beyond detection: what proportion of a document is generated from a pre-specified watermarked LLM? We study this watermark proportion estimation problem under the Gumbel--max watermarking mechanism, treating the next-token prediction distributions as unknown and arbitrary nuisance parameters subject to a non-degeneracy condition. We compare two observation regimes: in the full observation regime, the estimator observes the pseudorandom vector and the selected token at each position; in the more prevalent setting of pivotal reduction, it observes only a scalar pivot, which follows a one-dimensional Uniform--Beta mixture distribution. 
    Under pivotal reduction, we develop a Laguerre-polynomial estimator and establish a matching information-theoretic lower bound for the sample complexity. For full observation, we introduce an event-counting estimator and show a matching lower bound, yielding a substantially smaller sample complexity. As our results imply, although reducing to pivotal statistics is an elegant and prevalent choice, it is not always sample-efficient for estimating the proportion of watermarks.
\end{abstract}
\tableofcontents

\begin{sloppypar}

\section{Introduction}
As large language model (LLM) outputs become increasingly interleaved with human-written text, watermarking has emerged as a promising way to leave a statistical trace for the detection of potential LLM use \cite{aaronson2023watermarking, kirchenbauer2023watermark, kuditipudi2024robust, hu2024unbiased, christ2024undetectable, zhao2024provable}. In practice, however, human-AI collaboration has become highly efficient and prevalent, so that most documents are neither purely LLM-generated nor purely human-written. 
A human-written document may be appended with LLM-generated completion or vice versa, making binary Yes-or-No detection of LLM involvement not the end of the story. 
People are curious about a more quantitative question: from sampled text, can one recover the fraction of a document that carries a pre-specified watermark signal?
We call this estimand the watermark proportion and study the sharp sample complexity of estimating it under the Gumbel--max watermarking mechanism~\cite{aaronson2023watermarking}.

Denote by $\cW$ the token alphabet and by $\abs{\cW}$ its cardinality. 
At each time $t$, a probabilistic token generator, either human or LLM, performs next-token prediction (NTP) by sampling a token $w_t$ from a multinomial distribution $P_t=(p_{t,w})_{w\in\cW}$, where $p_{t,w}\geq 0$ and $\sum_{w\in\cW}p_{t,w}=1$. In the framework of Gumbel--max watermarking~\cite{aaronson2023watermarking}, a pseudorandom vector $U_t=(U_{t,1},U_{t,2},\cdots,U_{t,|\cW|})$ with i.i.d. $\mathrm{Unif}(0,1)$ entries is generated independently at each time\footnote{We use the standard idealized pseudorandomness assumption for the hash function.} $t=1,2,\cdots,N$, from a pseudorandom function computed based on the current position, preceding tokens, document metadata, etc. For non-watermarked contents, the token is sampled from the NTP distribution and is independent of the pseudorandom vector, that is,
\begin{align}\label{eq:nonwatermark_rule}
    (\textnormal{non-watermarked})\qquad U_t\sim \mathrm{Unif}(0,1)^{\otimes \abs{\cW}},\quad w_t\sim P_t,\quad w_t\perp U_t.
\end{align}
When the Gumbel--max watermark is activated,  the token is chosen by the following decoding rule:
\begin{align}
    \label{eq:gumbel_rule}
    w_t =  \cS(U_t,P_t)\define \argmax_{w \in \cW} \frac{\log U_{t,w}}{p_{t,w}}.
\end{align}
The Gumbel--max mechanism ensures that the marginal distribution of $w_t$ remains $P_t$ \cite{gumbel1954statistical, maddison2014sampling, jang2017categorical}. However, a token $w\in\cW$ with a larger pseudorandom value $U_{t,w}$ is more likely to be selected as the next token $w_t$. In this way, the participation of a watermarked LLM can be traced from the statistical correlation between $U_t$ and $w_t$. In contrast, for human-written natural languages or non-watermarked LLM-generated contents, the tokens are sampled from NTP distributions, totally independent of the pseudorandom vectors. Therefore, even if the LLM perfectly mimics the NTP distribution of human users, it can still be detected by inspecting the joint distribution between tokens and pseudorandom vectors.

To model hybrid documents containing both watermarked and non-watermarked tokens, we consider the following token-level mixture model, following~\cite{li2025optimal}. 
Concretely, for $t=1,2,\cdots,N$, we observe independently
\begin{align}\label{eq:full_data}
    (\textnormal{watermarked})\qquad U_t\sim \mathrm{Unif}(0,1)^{\otimes \abs{\cW}},\quad w_t\mid U_t\sim (1-\varepsilon) P_t+\varepsilon \delta_{\cS(U_t,P_t)}, 
\end{align}
where $\varepsilon\in[0,1]$ represents the proportion of watermarked content, and $\cS(U_t,P_t)$ is the aforementioned Gumbel--max decoding rule in \eqref{eq:gumbel_rule}. We treat NTP distributions $\{P_t\}_{t\in[N]}$ as unknown and arbitrary nuisance parameters. This black-box formulation accounts for the fact that the NTP distributions are typically inaccessible to external verifiers, due to commercial encapsulation by the service provider. 
The arbitrariness of $\{P_t\}_{t\in[N]}$ encompasses diverse contexts, sequential dependence structures, and temporal shifts in the token-generating process. Crucially, a regularity assumption must be adopted: we assume
\begin{align}\label{eq:not_one-hot}
    \max_{w\in\cW}\, p_{t,w}\leq 1-\Delta,\quad t=1,2,\cdots,N,
\end{align}
for some $\Delta\in(0,1)$. This assumption prevents the NTP distributions from being one-hot. A one-hot NTP distribution $P_t$ will break the correlation between $U_t$ and $w_t$, making the proportion-estimation problem statistically impossible.
In reality, one-hot $P_t$ does not exist since the intermediate values in LLMs are bounded for typical inputs and will pass through a softmax layer with a prescribed temperature.

In this paper, we study the sample complexity for estimating the Gumbel--max watermark proportion $\varepsilon$ under two different settings on the verifier side:
\begin{enumerate}
    \item \textbf{Full observation.} 
    We assume both pseudorandom vectors $\{U_t\}_{t\in[N]}$ and tokens $\{w_t\}_{t\in[N]}$ are observed, and hence the token alphabet $\cW$ is regarded as known.
    \item \textbf{Reduction to pivotal statistics.} We observe only \emph{pivotal statistics}, namely $\{Y_t\define U_{t,w_t}\}_{t\in[N]}$, which are real numbers in $(0,1)$. Since we do not observe the entire pseudorandom vector $U_t\in (0,1)^{\abs{\cW}}$ and the token $w_t\in\cW$, the vocabulary size $\abs{\cW}$ is unknown and can be an arbitrary integer no smaller than $2$.
\end{enumerate}
The pivotal reduction dates back to the germinal work of \cite{aaronson2023watermarking}, where the Gumbel--max watermark was first proposed. In \cite{aaronson2023watermarking}, a test fully based on pivotal statistics is developed to distinguish purely watermarked texts from their non-watermarked counterparts. In a substantial line of subsequent studies, the reduction to pivotal statistics is always adopted both to test the existence of Gumbel--max watermarks \cite{li2025statistical, cai2025optimal, li2026robust, he2026empirical, lattimore2026refined} and to estimate the watermark proportion \cite{li2025optimal}. The popularity of pivotal statistics is ascribed to its efficiency and effectiveness. Each pivotal statistic $Y_t=U_{t,w_t}$ serves as a tractable one-dimensional reduction of the original high-dimensional vector-token pair $(U_t,w_t)\in (0,1)^{\abs{\cW}}\times \cW$ while preserving part of the watermarking signals. In particular, the law of $Y_t$ is simply $\mathrm{Unif}(0,1)$ when the watermark is absent and becomes a mixture of Beta distributions when the corresponding token is purely from a Gumbel--max watermarked source. In the hybrid setup \eqref{eq:full_data}, the pivotal statistic follows
\begin{align}
    \label{eq:uniform_beta_mixture}
    Y_t\sim (1-\varepsilon)\mathrm{Unif}(0,1)+\varepsilon \sum_{w\in\cW}p_{t,w}\,\mathrm{Beta}(1/p_{t,w},1),
\end{align}
whose cumulative distribution function (CDF) is given by \(F(r)=(1-\varepsilon)r+\varepsilon\sum_{w\in\cW} r^{1/p_w}\).
When $\max_{w \in \cW} p_{t,w}<1$, the CDF of the pivotal statistic is distinct from the CDF of a uniform random variable. 
Therefore, $\varepsilon$ is identifiable from pivotal statistics whenever the regularity condition~\eqref{eq:not_one-hot} holds. Nevertheless, such a radical reduction may lose a non-negligible fraction of information and incur a worse minimax sample complexity than working with the full observation.

\subsection{Main Results}
In this paper, we establish the minimax-optimal sample complexities of estimating the Gumbel--max watermark proportion $\varepsilon$ under pivotal reduction and full observation. For a target accuracy \(\tau\in(0,1)\) and failure probability \(\delta\in(0,1)\), the sample-complexity question is: 

\emph{How large should the sample size \(N\) be so that the watermark proportion \(\varepsilon\) can be estimated within absolute error \(\tau\), with probability at least $1-\delta$ uniformly over all sequences of unknown NTP distributions that satisfy the regularity condition~\eqref{eq:not_one-hot}?}

\paragraph{Pivotal reduction leads to a nonparametric rate.}As we reveal, when only pivotal statistics $\{Y_t\}_{t\in[N]}$ are observed, estimating $\varepsilon$  requires a sample size of
\begin{align}\label{eq:sample_complexity_pivotal}
    N\gtrsim_\delta\, \left(1/\tau\right)^{2\vee \Theta(1/\sqrt{\Delta})}.
\end{align}
This sample complexity is equivalent to a nonparametric estimation rate of $\Omega_\delta(N^{-\{(1/2)\wedge\Theta(\sqrt{\Delta})\}})$, which, for small $\Delta$, is substantially slower than the parametric rate $N^{-1/2}$ in \cite{li2025optimal} conditioned on side information. In particular, \cite{li2025optimal} assumes additional access to purely watermarked samples $Y'_{t}\sim \sum_{w\in \cW}p_{t,w}\mathrm{Beta}(1/p_{t,w},1)$, which enables the estimation of the average CDF corresponding to the purely watermarked components. Such auxiliary samples are highly informative but rarely available in practice. The sample complexity they derive can therefore be much more optimistic than what we consider in this paper.

We also show that the lower bound \eqref{eq:sample_complexity_pivotal} can be achieved by an estimator that approximates $\varepsilon$ using Laguerre polynomials in $\{-\log(Y_t)\}_{t\in[N]}$. Our analysis breaks down the bias--variance trade-off of this estimator to a polynomial optimization problem on the complex plane:
\begin{align*}
    \inf_{f} \underbrace{\sup_{z\in[\Delta,1)}\,\abs{f(z)}}_{\textnormal{bias proxy}}\qquad \textnormal{s.t.}\ f(0)=1,\  \underbrace{\sup_{z\in\overline{\bbD}}\, \abs{f(z)}}_{\textnormal{variance proxy}}\leq A,
\end{align*}
where $A\geq 1$ and the minimization is taken over all polynomials with real coefficients that satisfy the constraints. The delicate infimum $\Theta(1)\cdot A^{-\Theta(\sqrt{\Delta})/(1/2-\Theta(\sqrt{\Delta}))}$ of the above optimization leads to the nonparametric rate.
\paragraph{Full observation restores parametric dependence on $\tau$.}When the full samples of pseudorandom vectors are provided, we show a sample complexity lower bound of
\begin{align}\label{eq:sample_complexity_full}
    N\gtrsim_{\delta,\abs{\cW}}\,(1/\tau)^2\cdot (1/\Delta)^{\abs{\cW}-1},
\end{align}
which can be achieved by an estimator based on the empirical frequency of a carefully designed event. 
Thus, in the large-sample regime where the alphabet size $\abs{\cW}$ is fixed while the sample size $n$ increases, the sample complexity of estimation under full observation~\eqref{eq:sample_complexity_full} is better than its pivot-based counterpart \eqref{eq:sample_complexity_pivotal}. Therefore, in this regime, the full samples contain strictly more information than the pivotal statistics, revealing that pivotal reduction is not always a loss-free choice.

\subsection{Related Work}

\paragraph{Gumbel--max watermark and reduction to pivotal statistics.}
The Gumbel--max mechanism was first introduced to statistical watermarking of LLMs by Aaronson and Kirchner in \cite{aaronson2023watermarking}, while the mechanism itself dates back to much earlier lines of work including~\cite{gumbel1954statistical, maddison2014sampling, jang2017categorical}. The reduction to pivotal statistics $\{Y_t=U_{t,w_t}\}_{t\in[N]}$ was first studied by \cite{aaronson2023watermarking}, where the authors use \( -\sum_{t=1}^N\log(1-Y_t)\) to detect:
\begin{align*}
    H_0:\ \varepsilon=0\ \textnormal{in \eqref{eq:full_data}}\qquad\textnormal{versus}\qquad H_1:\ \varepsilon=1\ \textnormal{in \eqref{eq:full_data}}.
\end{align*}
Following \cite{aaronson2023watermarking}, a line of subsequent work \cite{li2025statistical, cai2025optimal, he2026empirical, lattimore2026refined} also explores the same detection problem. In particular, the work~\cite{li2025statistical} considers all tests in the sum-score form $\indi\{\sum_{t=1}^N h(Y_t)\geq \alpha\}$ for a threshold $\alpha$ and a score function $h$. 
They find the score function that is optimal under their notion of efficiency rate, and show that the score $h(y)=-\log(1-y)$ chosen by \cite{aaronson2023watermarking} is sub-optimal. 
Another issue is raised in~\cite{cai2025optimal}: repeated context windows may undermine an imperfect pseudorandom generator and incur dependence between the pivotal statistics. They propose a test based on modified score functions that is optimal under the same notion of optimality as in \cite{li2025statistical}. 
Empirical comparisons between pivot-based testing procedures in~\cite{he2026empirical} suggest that sum-score tests previously studied in \cite{aaronson2023watermarking,li2025statistical,cai2025optimal} might not be optimal over all pivot-based tests. Lattimore~\cite{lattimore2026refined} replaces the score function $h(y)=-\log(1-y)$ in \cite{aaronson2023watermarking} with a truncated power-law of $1-Y_t$ and proves its Neyman--Pearson-type near-optimality over pivot-based tests.

The work \cite{li2026robust} initiates the study of the hybrid setup \eqref{eq:full_data} in which the tokens are sampled from a mixture of watermarked and non-watermarked sources. The focus therein remains on pivot-based detection, but the alternative hypothesis is changed from the purely watermarked version to $H_1: \varepsilon\asymp n^{-p}$ for $p\in[0,1]$. 
Subsequently, \cite{li2025optimal} moves from the detection of a nonzero $\varepsilon$ to estimating how large the proportion $\varepsilon$ is, which is most relevant to the goal of this paper. Specifically, the approach therein leverages the average cumulative distribution function (CDF) of the pivotal statistics:
\begin{align*}
    \overline{F}(r)=(1-\varepsilon)F_0(r)+\varepsilon\overline{F}_{P_{1:N}}(r),\quad r\in(0,1),
\end{align*}
where $F_0(r)\equiv r$ is the CDF of $\mathrm{Unif}(0,1)$, $F_{P_t}(r)=\sum_{w\in\cW} p_{t,w}r^{1/p_{t,w}}$, and $\overline{F}_{P_{1:N}}(r)=\sum_{t=1}^N F_{P_t}(r)/N$. It is observed that
\begin{align*}
    \varepsilon=\frac{\E_{F_0}[v]-\E_{\overline{F}}[v]}{\E_{F_0}[v]-\E_{\overline{F}_{P_{1:N}}}[v]}
\end{align*}
for any weight function $v:(0,1)\mapsto \bbR$ such that all the above expectations exist. Given a function $v$, $\E_{F_0}[v]$ is a known value and $\E_{\overline{F}}[v]$ can be estimated by the empirical average $\sum_{t=1}^N v(Y_t)/N$. However, $\E_{\overline{F}_{P_{1:N}}}[v]$, the expectation under the averaged pure watermark distribution, is not accessible from the pivotal samples $Y_{1:N}$. To overcome this issue, \cite{li2025optimal} assumes access to auxiliary samples $\{Y'_t\sim F_{P'_t}\}_{t\in N'}$ that are purely watermarked under NTP distributions $\{P'_t\}_{t\in N'}$ whose induced average pivotal CDF is close to that of $\{P_t\}_{t\in[N]}$. With this side information, $\E_{\overline{F}_{P_{1:N}}}[v]$ can be approximated with the auxiliary samples by $\sum_{t=1}^{N'}v(Y'_t)/N'$. This approach yields a rate of
\begin{align*}
    \abs{\widehat{\varepsilon}-\varepsilon}\lesssim \frac{\alpha_N}{\sqrt{N}}+\beta_N,
\end{align*}
where $\alpha_N$ depends on $\varepsilon$ and the density ratio $\dif \overline{F}_{P_{1:N}}/\dif F_0$, the magnitude of $\beta_N$ depends on how well this density ratio can be estimated using the auxiliary data. In contrast, our paper studies pivot-based proportion estimation in a more fundamental setting in which purely watermarked auxiliary samples are absent. As our theorems imply, the parametric rate $N^{-1/2}$ is unattainable when only pivotal samples from hybrid sources are available, and any estimator must suffer from a much slower nonparametric rate of $N^{-\{(1/2)\wedge c\sqrt{\Delta}\}}$.

Surprisingly, the aforementioned works on either the detection or the proportion estimation of Gumbel--max watermarks are all pivot-based: their procedures use only pivotal statistics, and the optimality conditions therein are established for pivot-based tests/estimators. The only known result that uses the full sample $(U_t,w_t)$ is the testing procedure in \cite{li2025likelihood}, which assumes that the NTP distributions are known or consistently estimable. Therefore, it remains open whether, and to what extent, the Gumbel--max watermark proportion can be estimated in an NTP-agnostic manner, which is among the motivations of this paper.

\paragraph{Polynomial approximation in statistical learning.} In the pivot-based setting, we estimate $\varepsilon$ by approximating it with Laguerre polynomials of transformed samples, and our lower bound construction is also guided by approximation-theoretic facts of polynomials. Recent years have witnessed the power of polynomial approximation methods in both algorithmic design and lower-bound constructions across various statistical and learning-theoretic topics. These include the application of Bernstein, Chebyshev polynomials, and some other good approximators to discrete distributions \cite{jiao2015minimax, wu2016minimax, orlitsky2016optimal, han2016minimax, han2018local, wu2019chebyshev, hao2020optimal, hao2020data}, the application of Hermite polynomials to Gaussian mixtures \cite{cai2010optimal, collier2017minimax, collier2020estimation, wu2020optimal, ma2022optimal, doss2023optimal, jung2026sharp, wang2026adaptive, chen2026sharp}, and other general topics \cite{ma2025best, polyanskiy2026dualizing}. Readers are referred to the monograph \cite{wu2020polynomial} for an overview. Our application of Laguerre polynomials to (transformed) Uniform-Beta mixtures adds a new recipe to this field.

\subsection{Paper Organization}
The remainder of this paper is organized as follows. In Section~\ref{sec:upper_bound_pivot} (resp. Section~\ref{sec:upper_bound_full}), we introduce our estimation procedure under pivotal reduction (resp. full observation) and present high-level ideas on how the performance guarantees are derived. In Section~\ref{sec:lower_bound}, information-theoretic lower bounds are provided to illustrate the worst-case (minimax) optimality of the aforementioned estimators. Section~\ref{sec:discussion} serves as a discussion of current limitations and future directions. Detailed proofs and auxiliary technical facts are deferred to the appendices.

\subsection{Notation}
We use the standard asymptotic notation, such as $O$ ($\lesssim$), $\Omega$ ($\gtrsim$), and $\Theta$ ($\asymp$). The subscripts in asymptotic notation denote the quantities on which hidden constants are allowed to depend. For example, $a=O_\delta(b)$ means that $a\leq C_\delta b$ for an absolute constant $C_\delta$ that depends only on $\delta$. The open and closed unit disks on the complex plane are denoted by
$\mathbb D=\{z\in\mathbb C: |z|<1\}$ and $\overline{\mathbb D}=\{z\in\mathbb C: |z|\le 1\}$, respectively. We use $\indi\{\cdot\}$ for the indicator function of an event and $\abs{\cdot}$ for the cardinality of a set. In light of the regularity condition \eqref{eq:not_one-hot}, we write
\begin{align}\label{eq:fixed_alphabet_ntp}
    \cP_\Delta^{\cW}\,\define \left\{P=(p_w)_{w\in\cW}\in [0,1]^{\abs{\cW}}:\ \sum_{w\in\cW}p_w=1,\ \max_{w\in\cW}p_w\leq 1-\Delta\right\}
\end{align}
as the class of $\Delta$-regular NTP distributions over a given token alphabet $\cW$, and 
\begin{align}\label{eq:unknown_alphabet_ntp}
    \cP_\Delta^{\geq 2}\,\define \left\{P=(p_j)_{j=1}^\infty\in [0,1]^{\infty}:\ 2\leq \abs{\{j:p_j>0\}}<\infty,\ \sum_{j\in \bbZ_+} p_j=1,\ \max_{j\in\bbZ_+ }p_j\leq 1-\Delta\right\}
\end{align}
as the class of $\Delta$-regular NTP distributions over unknown and arbitrary token alphabets whose cardinality is finite and at least $2$. The former notation \eqref{eq:fixed_alphabet_ntp} will be used as the family of NTP distributions under full observation, while the latter \eqref{eq:unknown_alphabet_ntp} appears under the pivotal reduction.

\section{Estimation under Pivotal Statistics Reduction}\label{sec:upper_bound_pivot}
In this section, we consider estimating $\varepsilon$ when only the pivotal statistics $\{Y_t\}_{t\in[N]}$ are observed. We show that an $\Theta_\delta((1/\tau)^{2\vee \Theta(1/\sqrt{\Delta})})$ sample complexity is attainable, that is, there exists an estimator that achieves an error bound of $O_\delta(N^{-\{(1/2)\wedge \Theta(\sqrt{\Delta})\}})$ using $N$ pivotal samples.
\begin{Theorem}\label{thm:upper_bound_pivotal}
    For any $\Delta\in(0,1)$ and $\delta\in(0,1)$, there exists an estimator $\widehat{\varepsilon}$ such that
    \begin{align*}
        \sup_{\varepsilon\in[0,1],\,\{P_t\}_{t\in[N]}\subset \cP_\Delta^{\geq 2}}\ P_{\{Y_t\}_{t\in[N]}}\left(\abs{\widehat{\varepsilon}-\varepsilon}> 1\wedge C \left(\frac{\log(e/\delta)}{N}\right)^{-\frac{2}{\pi}\arctan(\sqrt{\Delta})}\right)\leq \delta
    \end{align*}
    whenever $N\geq C'\log(e/\delta)$, where $C$ and $C'$ are universal absolute constants.
\end{Theorem}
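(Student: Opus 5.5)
The plan is to pass to the exponential scale, where the Uniform--Beta mixture \eqref{eq:uniform_beta_mixture} becomes a mixture of exponentials and Laguerre polynomials act as exact moment functionals. I then estimate $\varepsilon$ by a linear statistic whose bias and variance are controlled by a polynomial $f$. This reduces everything to the complex-analytic optimization problem described in the introduction. (I read the exponent in the statement as $+\frac{2}{\pi}\arctan\sqrt\Delta$, i.e.\ an error of order $(\log(e/\delta)/N)^{\frac{2}{\pi}\arctan\sqrt\Delta}$; since $\Delta<1$ this exponent is at most $1/2$.)

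\textbf{Step 1: exact moment identities.} Set $X_t=-\log Y_t$. If $Y\sim\mathrm{Beta}(1/p,1)$ then $Y^{1/p}\sim\mathrm{Unif}(0,1)$, so $X\sim p\cdot\mathrm{Exp}(1)$. Hence
\[
X_t\sim(1-\varepsilon)\,\mathrm{Exp}(1)+\varepsilon\sum_w p_{t,w}\,\bigl(p_{t,w}\,\mathrm{Exp}(1)\bigr).
\]
The generating function $\sum_k L_k(x)z^k=(1-z)^{-1}e^{-xz/(1-z)}$ gives, for $X\sim p\,\mathrm{Exp}(1)$, $\sum_k \E[L_k(X)]z^k=1/(1-(1-p)z)$, and therefore $\E L_k(X)=(1-p)^k$. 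For a real polynomial $f(u)=\sum_{k\le K}a_ku^k$ with $f(0)=1$, put $g=\sum_k a_kL_k$. Then
\[
\E g(X_t)=1-\varepsilon+\varepsilon\sum_w p_{t,w}f(1-p_{t,w}).
\]
Since $1-p_{t,w}\in[\Delta,1)$ by \eqref{eq:not_one-hot}, the estimator $\widehat\varepsilon=\mathrm{clip}_{[0,1]}\bigl(1-\mathrm{MoM}\{g(X_t)\}\bigr)$ has bias at most $\sup_{[\Delta,1)}|f|$, uniformly over $\cP^{\geq2}_\Delta$ and with no dependence on $|\cW|$. Here $\mathrm{MoM}$ is a median of means over $\asymp\log(e/\delta)$ blocks, which is why we need $N\ge C'\log(e/\delta)$. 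The estimator then satisfies
\[
|\widehat\varepsilon-\varepsilon|\lesssim \sup_{[\Delta,1)}|f|+\sqrt{V\log(e/\delta)/N}
\]
with probability $1-\delta$, where $V$ is a uniform bound on $\E g(X_t)^2$.

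\textbf{Step 2: variance via the disk (expected main obstacle).} I must show $\E g(X)^2\lesssim \sup_{|z|\le\rho}|f(z)|^2$ for some fixed $\rho\ge1$, uniformly over $p\in(0,1]$ and over mixtures. Under $\mathrm{Exp}(1)$ the $L_k$ are orthonormal, so the second moment equals $\sum a_k^2\le\sup_{\overline{\bbD}}|f|^2$ by Parseval. For $X\sim p\,\mathrm{Exp}(1)$ the bilinear generating function gives
\[
\E[L_j(X)L_k(X)]=[s^jt^k]\,\frac{1}{1-(1-p)(s+t)+(1-2p)st}.
\]
This lets me write $\E g^2$ as a double contour integral of $f(s)f(t)$ against this kernel. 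I would try to bound the integral by deforming the contours to radius $\rho$ slightly above $1$ and controlling the kernel there uniformly in $p$. If that fails near $p\to0$, I would instead truncate $X$ at level $\asymp\log N$, since the small-$p$ components concentrate near $0$ where $|L_k(x)|\le e^{x/2}$ is harmless. Either way, losing a constant-factor enlargement of the disk only changes constants in $A$.

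\textbf{Step 3: the polynomial optimization.} Let $\Omega=\bbD\setminus[\Delta,1)$ and let $\omega$ be the harmonic measure of the slit $[\Delta,1)$ at $0$ relative to $\Omega$. I take $h=\exp\bigl(\lambda(u+i\tilde u)\bigr)$, normalized so that $h(0)=1$, where $u$ is the harmonic function equal to $-1$ on the slit and $\frac{\omega}{1-\omega}$-scaled on the circle. This $h$ has modulus $A$ on $\partial\bbD$ and $A^{-(1-\omega)/\omega}$ on the slit, matching the two-constants bound. By conjugation symmetry of $\Omega$, $h$ has real Taylor coefficients. I then replace $h$ by a polynomial: take $h(rz)$ with $r<1$ close to $1$ and truncate its Taylor series at degree $K\asymp\log A$. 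This preserves both bounds up to constants and keeps $f(0)=1$. A conformal map of the slit disk computes $1-\omega=\frac4\pi\arctan\sqrt\Delta$; the case $\Delta\to1$ gives $\omega\to0$, which serves as a check.

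\textbf{Step 4: balancing.} Balancing $A\sqrt{\log(e/\delta)/N}$ against $A^{-(1-\omega)/\omega}$ gives $A=(N/\log(e/\delta))^{\omega/2}$. The resulting error is $(\log(e/\delta)/N)^{(1-\omega)/2}=(\log(e/\delta)/N)^{\frac2\pi\arctan\sqrt\Delta}$, and clipping to $[0,1]$ yields the $1\wedge$ in the statement.
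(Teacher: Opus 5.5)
Your Steps 1 and 4 are the paper's argument: the same Laguerre bias identity, median-of-means, and balancing. Your value $1-\omega=\frac{4}{\pi}\arctan\sqrt\Delta$ is also the correct harmonic measure.

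The genuine gap is in Step 3. Your $h=\exp(\lambda(u+\im\tilde u))$ is holomorphic only on the slit domain $\Omega=\bbD\setminus[\Delta,1)$, not on $\bbD$. Its boundary values on the two sides of the slit have equal modulus but different arguments, so $h$ cannot be continued across $(\Delta,1)$, and it has a branch point at $z=\Delta$. Its Taylor series at $0$ therefore has radius of convergence exactly $\Delta$, and ``dilate and truncate'' produces nothing that is controlled on $\overline{\bbD}$ or on $[\Delta,1)$. This is not a technicality: $h$ is the exact extremal of the two-constants problem on $\Omega$, and that extremal is never analytic across the slit, whereas the estimator needs a polynomial.

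The missing idea is to symmetrize across the slit. Put $\xi_\Delta(z)=(z-\Delta)/(1-\Delta z)$ and $s=\arctanh\sqrt{\xi_\Delta(z)}$. In this coordinate the two sides of the slit become the positive and negative real half-lines, $\partial\bbD$ becomes $\Im s=\pi/4$, and $z=0$ becomes $s=\im\arctan\sqrt\Delta$. After rescaling $\lambda$ and normalizing, your $h$ is just $e^{-\im cs}$. The even function $\cos(cs)$ with $c=4\lambda/\pi$ is a power series in $\xi_\Delta(z)$, hence holomorphic on $\bbD$. It loses only a factor $2$: $\abs{\cos(cs)}\le1$ on the slit, $\abs{\cos(cs)}\le\cosh\lambda$ on $\bbD$, and $\cos(cs)=\cosh(2\lambda\kappa_\Delta)$ at $z=0$. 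This is the paper's $g_{\lambda,a}$.

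Dilation also needs care. The map $z\mapsto rz$ moves the slit to $[r\Delta,r)$, so the function must be built for $a=r\Delta$, with $1-r\asymp1/\lambda$ to keep $e^{2\lambda(\kappa_\Delta-\kappa_a)}=O(1)$. The truncation degree is then of order $\lambda^2$, not $\log A$.

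Step 2 is also left open, and your proposed relaxation would lose the exponent. The kernel $1/(1-(1-p)(s+t)+(1-2p)st)$ you wrote down is also the generating function of $\frac{1}{2\pi}\int_0^{2\pi}\zeta_p(e^{\im\theta})^j\overline{\zeta_p(e^{\im\theta})^k}\dif\theta$ with $\zeta_p(z)=1-p+pz$. To see this, expand $\frac{1}{2\pi}\int_0^{2\pi}(1-s\zeta_p)^{-1}(1-t\overline{\zeta_p})^{-1}\dif\theta$. Hence, for $X\sim p\,\mathrm{Exp}(1)$, you get $\E g(X)^2=\frac{1}{2\pi}\int_0^{2\pi}\abs{f(1-p+pe^{\im\theta})}^2\dif\theta\le\sup_{\overline{\bbD}}\abs{f}^2$ for every $p\in(0,1]$. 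The inequality holds because the circle of radius $p$ about $1-p$ lies in $\overline{\bbD}$. No contour deformation or truncation is needed.

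This precision matters, because your claim that a fixed enlargement $\rho>1$ ``only changes constants in $A$'' is false. If the variance proxy is $\sup_{\rho\bbD}\abs{f}$, the relevant extremal problem becomes the two-constants problem for the slit inside $\rho\bbD$. There the harmonic measure of the slit at $0$ is strictly larger, by the maximum principle. So the exponent you could certify falls strictly below $\frac{2}{\pi}\arctan\sqrt\Delta$. Only enlargements by a factor $1+O(1/\lambda)$ come for free.
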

Note that $(2/\pi)\arctan(\sqrt{\Delta})\asymp \sqrt{\Delta}$ for $h\in(0,1)$ and that $(2/\pi)\arctan(\sqrt{\Delta})<(2/\pi)\cdot (\pi/4)=1/2$. Therefore, Theorem~\ref{thm:upper_bound_pivotal} is essentially the $O_\delta(N^{-\{(1/2)\wedge\Theta(\sqrt{\Delta})\}})$ rate that we desire. 
We present the high-level idea of Theorem~\ref{thm:upper_bound_pivotal} in the remainder of this section, while detailed proofs are deferred to Appendix~\ref{apdx:proof_upper_bound_pivotal}.

The crucial idea of Theorem~\ref{thm:upper_bound_pivotal} is to leverage the connection between Laguerre polynomial bases and mixtures of exponential distributions, which are the negative logarithm of the Uniform-Beta mixtures~\eqref{eq:uniform_beta_mixture} that pivotal statistics follow. We will construct a polynomial-based estimator and show that it achieves the claimed rate. Let $L_m:(0,+\infty)\to \bbR$ be the degree-$m$ Laguerre polynomial, and define $\ell_m:(0,1)\to \bbR$ by $\ell_m(y)=L_m(-\log(y))$. Then, by change-of-variables in the integral,
\begin{align}\label{eq:ell_orthogonal}
    \int_{(0,1)}\ell_m(y)\ell_n(y)\dif y=\int_{(0,+\infty)}L_m(x)L_n(x)e^{-x}\dif x=\indi\{m=n\},
\end{align}
and thus $\{\ell_m\}_{m=0}^\infty$ forms a system of orthonormal polynomials of $-\log(y)$ on the interval $(0,1)$. Given a polynomial $f(x)=\sum_{m=0}^M a_m x^m$ with $a_{0:M}\subset \bbR$, we define its corresponding Laguerre filter $\ell_f:(0,1)\mapsto \bbR$ by 
\begin{align}\label{eq:laguerre_filter}
    \ell_f(y)\define \sum_{m=0}^M a_m\ell_m(y),\quad y\in(0,1).
\end{align}
By the orthogonality condition \eqref{eq:ell_orthogonal}, we have $\E[\ell_f(Y_t)]=a_0$ when $Y_t$ is not watermarked. More generally, in the hybrid setting, $\E[\ell_f(Y_t)]=(1-\varepsilon)a_0+\varepsilon\mathrm{Rem}(f,P_t)$ for some remainder term $\mathrm{Rem}(f,P_t)$ depending on both $f$ and $P_t$. Therefore, the value of $\varepsilon$ can be extracted from the mean of $\ell_f(Y_t)$ if we manage to select a polynomial $f$ that maintains decent $a_0\neq 0$ while uniformly suppressing the remainder for arbitrary $P_t\in\cP_\Delta^{\geq 2}$. In fact, we have:
\begin{Lemma}\label{lem:bias_variance}
    Let $f(x)=1+\sum_{m=1}^M a_m x^m$ be a real-coefficient polynomial and $\ell_f$ be as in \eqref{eq:laguerre_filter}. Then, for every $t\in[N]$, we have
    \begin{align*}
        \abs{\E[\ell_f(Y_t)]-(1-\varepsilon)}\leq \sup_{z\in [\Delta,1)}\abs{f(z)},\qquad \E[\ell_f^2(Y_t)]\leq \sup_{z\in\overline{\bbD}}\,\abs{f(z)}^2.
    \end{align*}
\end{Lemma}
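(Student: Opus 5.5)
Under $-\log$, each mixture component becomes exponential. If $Y\sim\mathrm{Beta}(1/p,1)$, then $X=-\log Y\sim\mathrm{Exp}(1/p)$, and $\mathrm{Unif}(0,1)$ corresponds to $\mathrm{Exp}(1)$. So $X_t=-\log Y_t$ has law $(1-\varepsilon)\mathrm{Exp}(1)+\varepsilon\sum_w p_{t,w}\mathrm{Exp}(1/p_{t,w})$. The plan is to compute $\E[L_m(X)]$ exactly for $X\sim\mathrm{Exp}(\lambda)$ via the Laguerre generating function $\sum_m L_m(x)s^m=(1-s)^{-1}\exp(-xs/(1-s))$. Integrating against $\lambda e^{-\lambda x}$ gives
\begin{align*}
\sum_{m\ge0} s^m\,\E[L_m(X)]=\frac{1}{1-s}\cdot\frac{\lambda}{\lambda+s/(1-s)}=\frac{\lambda}{\lambda(1-s)+s}=\frac{1}{1-(1-1/\lambda)s}.
\end{align*}
Hence $\E[L_m(X)]=(1-1/\lambda)^m$. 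With $\lambda=1/p$ this is $(1-p)^m$, and with $\lambda=1$ it is $\indi\{m=0\}$. Alternatively, one can verify this from the explicit formula $L_m(x)=\sum_k\binom mk(-x)^k/k!$ and $\E[X^k]=k!\lambda^{-k}$, which avoids interchange issues; I would present it this way.

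\textbf{Bias bound.} Linearity gives $\E[\ell_f(Y_t)]=(1-\varepsilon)f(0)+\varepsilon\sum_w p_{t,w}f(1-p_{t,w})$, with $f(0)=1$. Every $w$ with $p_{t,w}>0$ has $1-p_{t,w}\in[\Delta,1)$, since $p_{t,w}\le1-\Delta$ and $p_{t,w}>0$. Zero-probability tokens contribute nothing. The remainder is therefore a convex combination of values $f(z)$ with $z\in[\Delta,1)$, so
\[
\bigl|\E[\ell_f(Y_t)]-(1-\varepsilon)\bigr|\le\varepsilon\sup_{[\Delta,1)}|f|\le\sup_{[\Delta,1)}|f|.
\]

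\textbf{Second moment.} This step is the main point. Write $\E[\ell_f^2(Y)]=\sum_{m,n}a_ma_n\E[L_m(X)L_n(X)]$. For the uniform component, orthonormality \eqref{eq:ell_orthogonal} gives $\sum_m a_m^2$. By Parseval on the circle, $\sum_m a_m^2=\frac1{2\pi}\int_0^{2\pi}|f(e^{i\theta})|^2\dif\theta\le\sup_{\overline{\bbD}}|f|^2$.

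For an $\mathrm{Exp}(\lambda)$ component with $\lambda=1/p\ge1$, I need the bivariate generating function
\begin{align*}
G(s,u)&=\sum_{m,n}s^mu^n\,\E[L_m L_n]\\
&=\frac{1}{(1-s)(1-u)}\cdot\frac{\lambda}{\lambda+\frac{s}{1-s}+\frac{u}{1-u}}\\
&=\frac{\lambda}{\lambda(1-s)(1-u)+s(1-u)+u(1-s)}.
\end{align*}
The goal is to express this as a mixture of rank-one kernels $\int \frac{1}{1-\alpha s}\frac{1}{1-\bar\alpha u}\,\mu(\dif\alpha)$ with $\mu$ a probability measure on $\overline{\bbD}$. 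Given such a representation, $\E[\ell_f^2]=\int|f(\alpha)|^2\mu(\dif\alpha)\le\sup_{\overline{\bbD}}|f|^2$.

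To find it, write $c=1-p\in[0,1)$. The denominator divided by $\lambda$ is $1-c(s+u)+(2c-1)su$, which should factor as $(1-\alpha s)(1-\bar\alpha u)$ averaged over a suitable $\alpha$. A concrete attempt: take $\alpha=c+i\sqrt{c(1-c)}\,\xi$ with a random sign $\xi$. Then $\E\alpha=c$ matches the linear coefficient, and $\E|\alpha|^2=c^2+c(1-c)=c$ lies within $\overline{\bbD}$. However, the right-hand side must be $1/(\text{denominator})$, not $\text{denominator}$, so this only matches terms up to first order.

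My first attempt would instead be an operator argument. Let $T$ be multiplication-free; concretely, show that the Gram matrix $K_{mn}=\E_\lambda[L_mL_n]$ has operator norm at most $1$ on $\ell^2$ when restricted appropriately, or directly that
\[
\E_\lambda[\ell_f^2]\le \frac1{2\pi}\int|f(e^{i\theta})|^2\dif\theta .
\]
If that fails, I would use a subordination trick. The density ratio $\lambda e^{-(\lambda-1)x}$ relative to $\mathrm{Exp}(1)$ is not bounded by $1$; it is bounded by $\lambda$, so a naive comparison is insufficient. Instead, write $\ell_f(Y)$ for $Y\sim\mathrm{Beta}(\lambda,1)$ as a conditional expectation of $\ell_f$ evaluated at a uniform variable. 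The Laguerre semigroup property $\E[L_m(X)\mid\cdot]$ under thinning/scaling yields multipliers $c^m$, i.e.\ $f(c\,\cdot)$, plus a contraction. Jensen then gives $\E[\ell_f^2]\le\sup_{\overline{\bbD}}|f|^2$ by the maximum principle.

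Either way, mixing over components and over $\varepsilon$ is convex and preserves the bound.
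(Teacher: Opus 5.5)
Your bias computation is correct and matches the paper's. The paper gets $\E[L_m(X)]=(1-p)^m$ from the Laplace transform $\int_0^\infty e^{-zx}L_m(x)\dif x=(z-1)^m/z^{m+1}$; your explicit-coefficient route is equally fine. Your generating function $G(s,u)$ and the Parseval bound for the uniform component are also right. So is your reduction of the second-moment bound to finding a probability measure $\mu$ on $\overline{\bbD}$ with $G(s,u)=\int(1-\alpha s)^{-1}(1-\bar\alpha u)^{-1}\mu(\dif\alpha)$.

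However, you never produce such a $\mu$, nor any other argument for the $\mathrm{Beta}(1/p,1)$ components. So the second inequality, which you yourself call the main point, is unproved. Your fallbacks do not close it:
\begin{itemize}
\item The target $\E_\lambda[\ell_f^2]\le\frac{1}{2\pi}\int_0^{2\pi}\abs{f(e^{\im\theta})}^2\dif\theta$ is false for every $p\in(0,1)$. For $f(z)=1+z$ one has $\ell_f(Y)=2-X$ with $\E X=p$ and $\E X^2=2p^2$, so $\E[\ell_f^2]=2+2(1-p)^2>2$. The Gram matrix has operator norm larger than $1$, and only the weaker sup-norm bound survives.
\item The scaling heuristic points to the wrong map. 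With $c=1-p$, any bound through the dilation $f(c\,\cdot)$ fails, since even $\sup_{\abs{z}\le c}\abs{1+z}^2=(2-p)^2$ is below the true value $4-4p+2p^2$.
\end{itemize}

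The missing idea is that $\mu$ is the uniform measure on the circle $\{\zeta_p(e^{\im\theta})\}$, where $\zeta_p(z)=1-p+pz$. This circle is internally tangent to the unit circle at $1$, so it lies in $\overline{\bbD}$. Write $A=1-(1-p)s$ and $B=1-(1-p)u$, and expand both factors in geometric series:
\begin{align*}
\frac{1}{2\pi}\int_0^{2\pi}\frac{\dif\theta}{(A-pse^{\im\theta})(B-pue^{-\im\theta})}=\frac{1}{AB-p^2su}=\frac{1}{1-(1-p)(s+u)+(1-2p)su}.
\end{align*}
This is your $G(s,u)$ at $\lambda=1/p$, and it is exactly the paper's argument. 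The paper identifies the Gram matrix $\E_{\mathrm{Beta}(1/p,1)}[\ell_j\ell_k]$ with $\frac{1}{2\pi}\int_0^{2\pi}\zeta_p(e^{\im\theta})^j\overline{\zeta_p(e^{\im\theta})^k}\dif\theta$ via generating functions. It then concludes $\E_{\mathrm{Beta}(1/p,1)}[\ell_f^2]=\frac{1}{2\pi}\int_0^{2\pi}\abs{f(1-p+pe^{\im\theta})}^2\dif\theta\le\sup_{\overline{\bbD}}\abs{f}^2$.

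Your scaling intuition can also be repaired into an equivalent proof. Here $X=-\log Y=pE$ with $E=-\log(Y^{1/p})\sim\mathrm{Exp}(1)$. The Laguerre multiplication formula $L_n(px)=\sum_{k=0}^n\binom{n}{k}p^k(1-p)^{n-k}L_k(x)$ then gives the pointwise identity $\ell_f(Y)=\ell_{f\circ\zeta_p}(Y^{1/p})$, with $Y^{1/p}\sim\mathrm{Unif}(0,1)$. Orthonormality plus Parseval yield the same formula exactly, with no Jensen step.
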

See Appendix~\ref{apdx:proof_bias_variance} for the proof. Lemma~\ref{lem:bias_variance} prompts us to design an estimator by
\begin{align}
    \widetilde{\varepsilon}_f=1-\frac{1}{N}\sum_{t=1}^N\ell_f(Y_t),
\end{align}
where $f(x)=1+\sum_{m=1}^M a_m x^m$ is a real polynomial to be determined later. By Lemma~\ref{lem:bias_variance} and Chebyshev's inequality, we can control the bias--variance decomposition of $\widetilde{\varepsilon}_f$ as:
\begin{align*}
    \abs{\widetilde{\varepsilon}_f-\varepsilon}\leq \sup_{z\in[\Delta,1)}\abs{f(z)}+\frac{1}{\delta\sqrt{N}}\sup_{z\in\overline{\bbD}}\,\abs{f(z)}.
\end{align*}
Using another estimator $\widehat{\varepsilon}_f$ based on the median-of-means tournament\footnote{Divide the observations $\{Y_t\}_{t=1}^{N}$ into multiple blocks. Replace the sample mean of $\{\ell_f(Y_t)\}_{t\in[N]}$ used in \(\widetilde{\varepsilon}_f\) by the median of the block-wise means} \cite{devroye2016sub, lugosi2019mean}, we can sharpen the $(1/\delta)$-dependence and obtain a better result that
\begin{align*}
    \abs{\widehat{\varepsilon}_f-\varepsilon}\lesssim \sup_{z\in[\Delta,1)}\abs{f(z)}+\sqrt{\frac{\log(e/\delta)}{N}}\sup_{z\in\overline{\bbD}}\,\abs{f(z)}
\end{align*}
as long as $N\gtrsim \log(e/\delta)$. Then, it suffices to optimize over $f$ to find the best weighted balance between the slit-maximum $\sup_{z\in[\Delta,1)}\abs{f(z)}$ and the disk-maximum $\sup_{z\in\overline{\bbD}}\abs{f(z)}$, which can be tackled by the following result:
\begin{Lemma}\label{lem:polynomial_tradeoff}
For any $\Delta\in(0,1)$ and $\lambda\geq 1$, there exists a real polynomial $f(x)=1+\sum_{m=1}^M a_m x^m$ such that
\begin{align*}
    \sup_{z\in[\Delta,1)}\abs{f(z)}\leq 8e^{-2\lambda\kappa_\Delta},\quad \sup_{z\in\overline{\bbD}}\abs{f(z)}\leq 8e^{\lambda(1-2\kappa_\Delta)},
\end{align*}
with degree $M\leq 2\lambda^2+16\lambda\log(8\lambda)+1$, where $\kappa_\Delta=(2/\pi)\arctan(\sqrt{\Delta})$.
\end{Lemma}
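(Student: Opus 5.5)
The plan is to first build a holomorphic function with the required two-sided behaviour, and only afterwards replace it by a polynomial. Concretely, I will look for a function $F$, holomorphic on a neighbourhood of $\overline{\bbD}\setminus[\Delta,1]$, with
\begin{align*}
F(0)=1,\qquad |F|\le e^{-2\lambda\kappa_\Delta}\ \text{on } [\Delta,1),\qquad |F|\le e^{\lambda(1-2\kappa_\Delta)}\ \text{on } \overline{\bbD}.
\end{align*}
The natural candidate is $F=\exp(\lambda(h-h(0)))$, where $h$ is holomorphic on the slit disk $\Omega=\bbD\setminus[\Delta,1)$ with $\operatorname{Re}h=0$ on the slit and $\operatorname{Re}h=1$ on the unit circle. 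Then $\operatorname{Re}h(0)=\omega(0)$, the harmonic measure of the circle seen from $0$ in $\Omega$. For $F$ to have the right sizes we need $\omega(0)=2\kappa_\Delta$ (so that $\operatorname{Re}h(0)$ matches $1-(1-2\kappa_\Delta)$ as the bounds dictate), and the imaginary part of $h(0)$ can be normalized to zero.

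To compute $\omega(0)$ explicitly, I will map $\Omega$ conformally to a standard domain. The map $z\mapsto \sqrt{\cdot}$ of a Möbius image does this: first send $z\mapsto \zeta=\frac{z}{(1-z)^2}$ or $z\mapsto\frac{(1-z)^2}{z}$, which sends the slit $[\Delta,1)$ and the circle to rays or segments, then apply a square root to land in a half-plane or a strip. In such coordinates the harmonic measure is an explicit angle divided by $\pi$. I expect the endpoint $\Delta$ to become a point whose argument involves $\arctan(\sqrt{\Delta})$; this is where the factor $(2/\pi)\arctan\sqrt{\Delta}$ should come from. I will choose the normalization so that $\operatorname{Re}h$ is $\le$ the required bounds everywhere, not only on the boundary arcs. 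It then only has to be checked on the boundary by the maximum principle for the harmonic function $\operatorname{Re}h$ (its sup bounds $|F|$). Real coefficients follow by choosing $h$ to commute with conjugation, which the symmetry of $\Omega$ about $\bbR$ permits.

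$F$ is not analytic across the slit, so it cannot be Taylor-expanded on $\overline{\bbD}$ directly. I will instead work with a dilated copy: define $F_\rho(z)=F$ built for the slightly shrunk configuration, i.e.\ apply the construction with the circle of radius $\rho^{-1}=1+1/\lambda$ and slit $[\Delta,\rho^{-1})$, or equivalently compose with $z\mapsto\rho z$. On the radius $R=1+\frac1{2\lambda}$, this $F_\rho$ is analytic away from the slit. Changing radii by $O(1/\lambda)$ changes $\lambda\cdot(\text{harmonic measure})$ only by $O(1)$, so it costs only a constant factor $e^{O(1)}$, which gets absorbed into the constant $8$. Taylor truncation at degree $M$ then has error at most $\sup_{|z|=R}|F_\rho|\,R^{-M}/(1-R^{-1})\lesssim \lambda e^{\lambda}\,e^{-M/(2\lambda)}$. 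Requiring this to be below $e^{-2\lambda\kappa_\Delta}$ gives $M\approx 2\lambda^2+O(\lambda\log\lambda)$, matching the stated degree bound. Finally, rescale so that the constant coefficient is exactly $1$.

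The main obstacle is the slit. The slit reaches the circle at $z=1$, so no fixed larger disk avoids the singular set. The dilation must therefore separate the slit endpoint from the expansion disk while keeping the $O(1/\lambda)$ loss in harmonic measure under control near $z=1$. I will handle this by explicit estimates in the conformal coordinates: bound $\operatorname{Re}h$ near the corner via the square-root local behaviour, which gives a change of order $\sqrt{1/\lambda}\cdot\lambda$ at worst. If that is too lossy, I will instead choose the auxiliary slit to stop at $1-1/\lambda$.
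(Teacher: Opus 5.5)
Your plan fails at the Taylor-truncation step, and the obstruction is the whole slit, not its endpoint at $z=1$. Normalize $h$ as you intend: $\Re h=0$ on $[\Delta,1)$, $\Re h=1$ on $\partial\bbD$, and $h(\bar z)=\overline{h(z)}$. Then $h(0)=2\kappa_\Delta$ is real, and on the two edges of the slit $h(x\pm\im 0)=\pm\im y(x)$, with $y(x)\neq 0$ for $x\in(\Delta,1)$. So $F=\exp(\lambda(h-h(0)))$ has boundary values $e^{-2\lambda\kappa_\Delta}e^{\pm\im\lambda y(x)}$ on the two edges. These differ except where $\lambda y(x)\in\pi\bbZ$, so $F$ is not even continuous across the slit. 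Equivalently, $\log\abs{F}=\lambda(\Re h-2\kappa_\Delta)$ has a strict minimum along the slit, so it is not harmonic there. Consequently neither $F$ nor any dilate $F(\rho\,\cdot)$ is holomorphic on a disk about $0$ that meets the slit, and its Taylor series at $0$ has radius of convergence at most $\Delta$ (resp.\ $\Delta/\rho$). The Cauchy estimate you write on $\abs{z}=R$ is therefore unavailable. Being analytic on that circle ``away from the slit'' is irrelevant, because the estimate needs holomorphy throughout $\abs{z}<R$, and that disk contains the slit. Stopping the auxiliary slit at $1-1/\lambda$ does not help: the bounds are demanded on $[\Delta,1)$ itself, where every polynomial is holomorphic.

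The missing idea is to symmetrize in $h$. Crossing the slit corresponds to $h\mapsto -h$, so any even entire function of $h$ has matching boundary values on the two edges. It therefore extends holomorphically across $(\Delta,1)$ by Morera's theorem, with a removable singularity at the branch point $\Delta$. Take $g=\cosh(\lambda h)/\cosh(2\lambda\kappa_\Delta)$. It is holomorphic on all of $\bbD$ with real Taylor coefficients and $g(0)=1$. On the slit, $\abs{g}=\abs{\cos(\lambda y)}/\cosh(2\lambda\kappa_\Delta)\le 2e^{-2\lambda\kappa_\Delta}$. On $\bbD$, using $\abs{\cosh(u+\im v)}\le\cosh u$, we get $\abs{g}\le\cosh(\lambda)/\cosh(2\lambda\kappa_\Delta)\le 2e^{\lambda(1-2\kappa_\Delta)}$. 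Smallness on the slit now comes from real oscillation rather than from decay of the modulus. This is exactly the paper's $g_{\lambda,a}(z)=D_\lambda(\xi_a(z))/D_\lambda(-a)$ with $D_\lambda(\zeta^2)=\cos((4\lambda/\pi)\arctanh\zeta)$. In the strip coordinate $w$ from the proof of Claim~\ref{clm:tradeoff_optimality} (with $\Delta$ replaced by $a$), it is $\cosh(\lambda w)/\cosh(2\lambda\kappa_a)$. With $g$ in place of $F$, the rest of your outline goes through as in the paper. Set $\rho=1-1/(2\lambda)$ and $a=\rho\Delta$, and let $f=g_{\lambda,a}(\rho\,\cdot)$, which is holomorphic and bounded on $\abs{z}<1/\rho$. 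Truncate its Taylor series using Cauchy's estimate at degree $M=\lceil(\lambda+\log(8\lambda))/(-\log\rho)\rceil$. Finally, absorb the factor $e^{2\lambda(\kappa_\Delta-\kappa_a)}\le 3$ into the constant $8$.
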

See Appendix~\ref{apdx:proof_polynomial_tradeoff} for the proof. Lemma~\ref{lem:polynomial_tradeoff} further reduces the bias--variance tradeoff to
\begin{align*}
    \abs{\widehat{\varepsilon}_f-\varepsilon}\lesssim e^{-2\lambda\kappa_\Delta}+\sqrt{\frac{\log(e/\delta)}{N}}e^{\lambda(1-2\kappa_\Delta)},
\end{align*}
with the first term growing with $\lambda$ and the second decreasing with $\lambda$. Therefore, choosing $\lambda=\log(\sqrt{N/\log(e/\delta)})$ (which is $\geq 1$ when $N\gtrsim \log(e/\delta)$) to balance the two terms, we eventually obtain a rate of $O_\delta(n^{-(2/\pi)\arctan(\sqrt{\Delta})})=O_\delta(n^{-\{(1/2)\wedge \Theta(\sqrt{\Delta})\}})$.

The error exponent $(2/\pi)\arctan(\sqrt{\Delta})\asymp \sqrt{\Delta}$ originates from the delicate bias--variance trade-off between the slit-maximum and the disk-maximum of the polynomial $f$ as in Lemma~\ref{lem:polynomial_tradeoff}. In fact, such a tradeoff has already been order-wise optimal not only over real polynomials but also over all complex-valued functions that are holomorphic in $\bbD$ and continuous in $\overline{\bbD}$. See Section~\ref{sec:discussion} for more discussion.

\section{Estimation with Full Observation}\label{sec:upper_bound_full}
In this section, we consider the setting in which the full data \(\{(U_t,w_t)\}_{t\in[N]}\) are observed. It is intuitively true that a faster estimation rate is achievable in this setting than in the pivotal-only setting, since substantially more structures are observed. This intuition can be justified by an indicator-based estimator, which we will elaborate on below. 

Let $\beta_\Delta=\Delta/(1-\Delta)$. Consider the indicator function
\begin{align}
    B(u,w)=\indi\left\{u_w<\prod_{v\in\cW\backslash\{w\}}u_v^{1/\beta_\Delta}\right\},\quad (u,w)\in(0,1)^{\abs{\cW}}\times \cW.
\end{align}
The distribution of $B(U,w)$ contributes a signal gap of $\Delta^{\abs{\cW}-1}$ to separate watermarked and non-watermarked contents, as seen from the following lemma.
\begin{Lemma}\label{lem:different_bern}
    For any $P\in\cP^{\cW}_\Delta$, it holds that
    \begin{align*}
        B(U,w)\sim
        \begin{dcases}
            \mathrm{Bern}(\Delta^{\abs{\cW}-1}),\quad & U\sim \mathrm{Unif}(0,1)^{\otimes \abs{\cW}},\ w\sim P,\ w\perp U;\\
             \mathrm{Bern}(0),\quad &U\sim \mathrm{Unif}(0,1)^{\otimes \abs{\cW}},\ w\mid U\sim \delta_{\cS(U,P)}.
        \end{dcases}
    \end{align*}
\end{Lemma}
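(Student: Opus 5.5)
The plan is to handle the two regimes separately: a direct moment computation for the non-watermarked case, and a deterministic inequality for the watermarked case. Throughout, write $k=\abs{\cW}$ and note the identity $\beta_\Delta/(1+\beta_\Delta)=\Delta$, since $1+\beta_\Delta=1/(1-\Delta)$.

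\emph{Non-watermarked case.} Condition on $w$; since $w\perp U$, the vector $U$ is still $\mathrm{Unif}(0,1)^{\otimes k}$. Set $Z=\prod_{v\neq w}U_v^{1/\beta_\Delta}\in(0,1)$. Then $U_w$ is independent of $Z$, so
\begin{align*}
P(B(U,w)=1\mid w)=\E[Z]=\prod_{v\neq w}\E\big[U_v^{1/\beta_\Delta}\big]=\Big(\frac{1}{1+1/\beta_\Delta}\Big)^{k-1}=\Big(\frac{\beta_\Delta}{1+\beta_\Delta}\Big)^{k-1}=\Delta^{k-1}.
\end{align*}
This value does not depend on $w$, so averaging over $w\sim P$ gives $B(U,w)\sim\mathrm{Bern}(\Delta^{k-1})$ for every $P$. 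The regularity of $P$ is not even needed here.

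\emph{Watermarked case.} Here I will show that $B(U,\cS(U,P))=0$ almost surely. Let $w=\cS(U,P)$. Almost surely all $U_v\in(0,1)$ and the argmax is unique, so $p_w>0$ and $\log U_w/p_w\ge \log U_v/p_v$ for every $v$ with $p_v>0$.
\begin{itemize}
\item For $v\neq w$ with $p_v>0$, multiplying by $p_v>0$ gives $\log U_v\le (p_v/p_w)\log U_w$.
\item For $v$ with $p_v=0$, the same inequality $\log U_v\le 0=(p_v/p_w)\log U_w$ holds trivially.
\end{itemize}
Summing over $v\neq w$ yields
\begin{align*}
\sum_{v\neq w}\log U_v\le \frac{1-p_w}{p_w}\log U_w .
\end{align*}
Because $p_w\le 1-\Delta$, we have $(1-p_w)/p_w\ge \Delta/(1-\Delta)=\beta_\Delta$. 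Since $\log U_w<0$, this gives $\frac{1-p_w}{p_w}\log U_w\le \beta_\Delta\log U_w$. Dividing by $\beta_\Delta$ then gives $\frac{1}{\beta_\Delta}\sum_{v\neq w}\log U_v\le \log U_w$, that is, $U_w\ge \prod_{v\neq w}U_v^{1/\beta_\Delta}$. Hence $B=0$ almost surely.

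The only delicate point is bookkeeping for zero-probability tokens and ties, which I would handle with the conventions above on a probability-one event. Otherwise the argument is a short calculation, and the key step is recognizing that the threshold exponent $1/\beta_\Delta$ is exactly what the bound $p_w\le1-\Delta$ allows.
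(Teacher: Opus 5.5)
Your proof is correct and follows essentially the same route as the paper: the non-watermarked case is the same product-of-moments computation $\int_0^1 u^{1/\beta_\Delta}\dif u=\Delta$. The watermarked case is the same deterministic consequence of the Gumbel--max inequalities; you write it additively in logarithms, using $\log U_w<0$ and $(1-p_w)/p_w\ge\beta_\Delta$, whereas the paper multiplies $U_w^{p_v}\ge U_v^{p_w}$ over $v\neq w$ and then uses $p_w/(1-p_w)\le 1/\beta_\Delta$ together with $U_v<1$.
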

The event $\{u_w<\prod_{v\in\cW\backslash\{w\}}u_v^{1/\beta_\Delta}\}$ has a clean intuition.
For the watermarked contents, we know that $\log U_v \leq (p_v/p_w) \log U_w$ according to the Gumbel--max rule~\eqref{eq:gumbel_rule}.
Summing over all $v \neq w$ gives that $\log(\prod_{v\in\cW\backslash\{w\}}U_v) \leq [(1-p_w)/p_w] \log U_w \leq \beta_\Delta \log U_w$ and thus $B(U,w)=0$ for all $w$ selected from the Gumbel--max rule.
See Appendix~\ref{apdx:proof_different_bern} for the complete proof. 

According to Lemma~\ref{lem:different_bern}, we have
\begin{align*}
    \frac{1}{N}\sum_{t=1}^N B(U_t,w_t)\sim \frac{ (1-\varepsilon)}{N}\sum_{t=1}^NB_t,\quad B_t\iid \mathrm{Bern}(h^{\abs{\cW}-1}).
\end{align*}
Thus, we can estimate $\varepsilon$ by
\begin{align}\label{eq:estimator_full}
    \widehat{\varepsilon}=0\vee \left\{1-\frac{\sum_{t=1}^N B(U_t,w_t)}{N\Delta^{\abs{\cW}-1}}\right\}\wedge 1.
\end{align}
Note that the inner term is an unbiased estimator of $\varepsilon$. It is the summation of $N$ independent variables, each of which has an absolute value within \(1/(Nh^{\abs{\cW}-1})\), with total variance
\begin{align*}
    \frac{N\cdot \Delta^{\abs{\cW}-1}(1-\Delta^{\abs{\cW}-1})}{(N\Delta^{\abs{\cW}-1})^2}\leq \frac{1}{N\Delta^{\abs{\cW}-1}}.
\end{align*}
Therefore, by Bernstein's inequality, the inner term has an error within 
\begin{align*}
    \sqrt{\frac{2\log(2/\delta)}{N\Delta^{\abs{\cW}-1}}}+\frac{2\log(2/\delta)}{3N\Delta^{\abs{\cW}-1}} 
\end{align*}
with probability at least $1-\delta$. Finally, the truncation operation $0\vee\cdots\wedge 1$ ensures that the error is within $[0,1]$. In summary, we conclude that:
\begin{Theorem}
     Let $\widehat{\varepsilon}$ be the estimator as in \eqref{eq:estimator_full} based on full observation $\{(U_t,w_t)\}_{t\in [N]}$. For any $\cW$ with $\abs{\cW}\geq 2$, $\Delta\in(0,1-1/\abs{\cW}]$, and $\delta\in(0,1)$, we have
    \begin{align*}
        \sup_{\varepsilon\in[0,1],\, \{P_t\}_{t\in [N]}\subset\cP_\Delta^{\cW}}\ P_{\{(U_t,w_t)\}_{t\in[N]}}\left(\abs{\widehat{\varepsilon}-\varepsilon}> 1\wedge \sqrt{\frac{4\log(2/\delta)}{N\Delta^{\abs{\cW}-1}}}\right)\leq \delta.
    \end{align*}
\end{Theorem}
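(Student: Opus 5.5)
The plan is to combine Lemma~\ref{lem:different_bern} with a Bernstein bound for a sum of independent bounded variables, and then use the clipping to $[0,1]$ to convert everything into the stated form. Write $q=\Delta^{\abs{\cW}-1}$ and $X_t=B(U_t,w_t)$. Under model \eqref{eq:full_data}, conditionally on $U_t$, the token $w_t$ is drawn from $P_t$ independently of $U_t$ with probability $1-\varepsilon$, and equals $\cS(U_t,P_t)$ with probability $\varepsilon$. Mixing the two cases of Lemma~\ref{lem:different_bern} therefore gives $X_t\sim\mathrm{Bern}((1-\varepsilon)q)$. The pairs $(U_t,w_t)$ are independent across $t$, so the $X_t$ are independent. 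This holds for every sequence $\{P_t\}\subset\cP_\Delta^{\cW}$, so the bound will be uniform over the nuisance parameters.

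The restriction $\Delta\le 1-1/\abs{\cW}$ is only there so that $\cP_\Delta^{\cW}$ is nonempty. Lemma~\ref{lem:different_bern} needs $\beta_\Delta\le (1-p_w)/p_w$ for all $w$, which is exactly $p_w\le 1-\Delta$. I would double-check this inside the lemma's proof but take the lemma as given.

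Next, set $T=1-\frac{1}{Nq}\sum_t X_t$, so that $\E T=\varepsilon$. The summands $X_t/(Nq)$ are centered at $(1-\varepsilon)/N$ and deviate from that center by at most $1/(Nq)$ in absolute value. The total variance is at most $\sum_t q/(Nq)^2=1/(Nq)$. Bernstein's inequality then gives, with probability at least $1-\delta$,
\begin{align*}
\abs{T-\varepsilon}\le \sqrt{\frac{2\log(2/\delta)}{Nq}}+\frac{2\log(2/\delta)}{3Nq}.
\end{align*}
Clipping to $[0,1]$ is a projection onto a set containing $\varepsilon$, so it cannot increase the error: $\abs{\widehat\varepsilon-\varepsilon}\le\abs{T-\varepsilon}$. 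It also gives $\abs{\widehat\varepsilon-\varepsilon}\le 1$ deterministically.

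The main obstacle is the last step, which absorbs the linear Bernstein term into a single square-root term with constant $4$. Let $s=\log(2/\delta)/(Nq)$. I would split into two cases.

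\textbf{Case $s\le c$ for a suitable constant $c$.} Here $\tfrac23 s\le \tfrac23\sqrt{c}\sqrt s$, so the right-hand side is at most $(\sqrt2+\tfrac23\sqrt c)\sqrt s$. This is at most $2\sqrt s=\sqrt{4s}$ whenever $\sqrt c\le \tfrac32(2-\sqrt2)$, i.e.\ $c\approx 0.77$.

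\textbf{Case $s> c$.} It suffices that $\sqrt{4s}\ge 1$, i.e.\ $s\ge 1/4$. Since $1/4<0.77$, this case is covered, and the claimed bound $1\wedge\sqrt{4s}$ equals $1$, which holds trivially.

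Combining the two cases gives the stated inequality uniformly over $\varepsilon$ and $\{P_t\}$.
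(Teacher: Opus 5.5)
Your proposal is correct and follows the same route as the paper. Lemma~\ref{lem:different_bern} gives $B(U_t,w_t)\sim\mathrm{Bern}((1-\varepsilon)\Delta^{\abs{\cW}-1})$ independently across $t$, the unclipped estimator is unbiased with variance at most $1/(N\Delta^{\abs{\cW}-1})$, Bernstein's inequality controls its deviation, and clipping to $[0,1]$ cannot increase the error. Your case split on $s=\log(2/\delta)/(N\Delta^{\abs{\cW}-1})$, with any threshold $1/4\le c\le 0.77$, makes explicit how the linear Bernstein term is absorbed into $1\wedge\sqrt{4s}$; the paper leaves this step implicit when it passes from the Bernstein bound to the stated rate.
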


\section{Information-Theoretic Lower Bounds}\label{sec:lower_bound}
The current section is dedicated to proving information-theoretic lower bounds under full observation and pivotal reduction, respectively. To begin with, we claim that a trivial $\Omega_\delta(1/\tau^2)$ sample complexity ($\Omega_\delta(1/\sqrt{N})$ estimation error) is inevitable in both settings. It suffices to prove this for the more informative setup of full observation. In particular, we consider testing
\begin{align*}
    H_0:\ \varepsilon=\frac{1}{2}+\tau,\  p_{t,w}\equiv 1/\abs{\cW}\qquad \textnormal{versus}\qquad H_1:\ \varepsilon=\frac{1}{2},\ p_{t,w}\equiv 1/\abs{\cW},
\end{align*}
where the nuisance NTP distributions are stationary and flat. By Le Cam's two-point argument, any estimator must suffer an error proportional to $\tau=\tau(N)$ if the above pair of hypotheses forces the minimal sum of type-I and type-II errors to be at least $\Omega(\delta)$ under sample size $N$. In fact, the chi-square divergence between the two underlying distributions is $\chi^2(Q_{H_0},Q_{H_1})=4\tau^2(\abs{\cW}-1)/(\abs{\cW}+1)\leq 4\tau^2$. Therefore, a lower bound of $\Omega_\delta(1/\sqrt{N})$ can be obtained by setting $\chi^2(Q_{H_0},Q_{H_1})\lesssim_\delta 1/N$. However, this parametric lower bound is only useful when $\Delta=\Omega(1)$. When $\Delta$ is diminishing, a subtle mixture-versus-mixture construction can produce much sharper lower bounds.

\subsection{Lower Bound under Pivotal Statistics Reduction}\label{sec:lower_bound_pivot}
We first consider the pivot-based setting, in which we manage to show:
\begin{Theorem}\label{thm:lower_bound_pivotal}
    Consider estimating $\varepsilon$ based on pivotal statistics $\{Y_t\}_{t\in[N]}$. For any $\Delta\in(0,1)$ and $\delta\in(0,1)$, it holds that
    \begin{align*}
        \inf_{\widehat{\varepsilon}}\, \sup_{\varepsilon\in[0,1],\, \{P_t\}_{t\in[N]}\subset \cP_\Delta^{\geq 2}}\, P_{\{Y_t\}_{t\in[N]}}\left(\abs{\widehat{\varepsilon}-\varepsilon}\geq  \frac{1}{2}\wedge c_\delta N^{-\{(1/2)\wedge c\sqrt{\Delta}\}}\right)>\delta,
    \end{align*}
    for a universal constant $c>0$ and a constant $c_\delta>0$ that depends only on $\delta$.
\end{Theorem}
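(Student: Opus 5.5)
The plan is a Le Cam two-point argument with two \emph{composite} hypotheses, i.e.\ mixture versus mixture. The parametric part $c_\delta N^{-1/2}$ is already supplied by the flat-NTP construction given just before this subsection. So it suffices to show that for small $\Delta$ no estimator beats $N^{-c\sqrt{\Delta}}$. Since the $P_t$ are arbitrary, a Bayes argument lets me draw them i.i.d.\ from a prior, which makes $Y_1,\dots,Y_N$ i.i.d.\ from an averaged law. I then take two priors with $\varepsilon_0=1/2$ and $\varepsilon_1=1/2+\tau$, and require their induced single-sample laws $Q_0,Q_1$ to satisfy $\chi^2(Q_1,Q_0)\le c_\delta'/N$. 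Then $(1+\chi^2)^N-1=O_\delta(1)$, and the total-variation bound yields the claim with error $\ge\tau/2$.

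\textbf{Reduction to a moment problem.} Work with $X=-\log Y$. A $\mathrm{Beta}(1/p,1)$ variable maps to an exponential with mean $s=p$, and the uniform component maps to an exponential with mean $s=1$. Hence $X$ is a scale mixture $\int s^{-1}e^{-x/s}\,d\mu(s)$ with
\[
\mu=(1-\varepsilon)\delta_1+\varepsilon\,\E_{P}\Big[\sum_w p_w\delta_{p_w}\Big],
\]
where the expectation is over the prior on $P$. The key identity is $\int_0^\infty L_m(x)\,s^{-1}e^{-x/s}\,dx=(1-s)^m$. Together with the orthonormality \eqref{eq:ell_orthogonal} and the bound $dQ_0/dx\ge \tfrac12 e^{-x}$ coming from the uniform component, this gives
\[
\chi^2(Q_1,Q_0)\le 2\sum_{m\ge0}\Big(\int(1-s)^m\,d\nu(s)\Big)^2,\qquad \nu=\mu_1-\mu_0 .
\]
The atom at $s=1$ only affects the $m=0$ term. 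Writing $z=1-s$, the task becomes the following. Find a signed measure $\sigma$ on $z\in[\Delta,1-\Delta']$ with total mass $-\tau$, so that it cancels the extra $\tau$ at $z=0$, and with $\sum_{m\ge1}\big(\int z^m d\sigma\big)^2\lesssim 1/N$. In addition, $\sigma$ must be realizable as $\varepsilon_1\rho_1-\varepsilon_0\rho_0$ for admissible $\rho_i$.

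\textbf{Constructing $\sigma$ (main obstacle).} This is exactly the dual of the slit-versus-disk polynomial problem behind Lemma~\ref{lem:polynomial_tradeoff}. I would first build the extremal object directly by conformal mapping. Map $\overline{\bbD}\setminus[\Delta,1)$ to an annulus or half-plane via the same map that produces $\kappa_\Delta=(2/\pi)\arctan\sqrt{\Delta}$. Take $\sigma$ to be $-\tau$ times a smoothed harmonic-measure-type density on the slit, normalized so that its Cauchy transform $\int (1-wz)^{-1}d\sigma(z)=\sum_m w^m\int z^m d\sigma$ has $H^2(\bbD)$ norm, minus the constant term, of order $\tau e^{\lambda(1-2\kappa)}\cdot e^{-2\lambda\kappa}$-type tradeoff. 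Equivalently, I would invoke Hahn--Banach on $C([\Delta,1))\times H^2$. The optimality of Lemma~\ref{lem:polynomial_tradeoff} (no $f$ with $f(0)=1$, disk-norm $\le A$, slit-norm $\ll A^{-\kappa/(1/2-\kappa)}$) yields a measure $\sigma$ with the required moment decay and mass $\tau\asymp N^{-c\sqrt\Delta}$. The delicate part is proving the matching lower bound on that extremal problem, rather than just the upper bound. I expect to do this via a two-constant / Hadamard three-lines estimate for the harmonic measure of the slit.

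\textbf{Realizability.} I take $\rho_0$ to have a smooth positive density on the slit. Then $\rho_1=(\varepsilon_0\rho_0+\sigma)/\varepsilon_1$ is nonnegative once $\tau$ is small, and total masses match by construction. Any probability measure $\rho$ on $s\in[\Delta',1-\Delta]$ must arise as $\E_P[\sum_w p_w\delta_{p_w}]$ with $P\in\cP_\Delta^{\geq2}$. For this I would use two-token NTPs $(p,1-p)$ with $p\in[\Delta,1-\Delta]$. These produce $p\delta_p+(1-p)\delta_{1-p}$, so I need $\rho$ of the form $\int(p\delta_p+(1-p)\delta_{1-p})\,d\pi(p)$. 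This holds for $\rho$ with density $r(s)$ satisfying a symmetry-compatible condition. If that is too restrictive, I will allow uniform NTPs on $k$ tokens, which give atoms $\delta_{1/k}$, and approximate. The moment construction must therefore be carried out within this admissible cone, which I would handle by restricting $\sigma$ to be supported on $[\Delta,1/2]$ and pairing with its reflection weighted by $(1-p)/p$. Since the reflected part lives at $z\in[1/2,1-\Delta]$, its moments decay geometrically, which only changes constants.
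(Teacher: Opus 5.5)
Your reduction is correct and coincides with the paper's. In Laguerre coordinates a $\mathrm{Beta}(1/p,1)$ component has $m$-th coefficient $(1-p)^m$, the uniform component has weight at least $1/2$ under $Q_0$, and Parseval gives $\chi^2\le 2\sum_{m\ge1}(\int z^m\dif\sigma)^2$ (cf.\ Lemma~\ref{lem:almost_uniform}).

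Your moment step differs from the paper's. The paper matches moments $1,\dots,M$ exactly on $[\Delta,1/2]$ via the Chebyshev extremal property, Hahn--Banach and Riesz. This gives $\norm{\sigma}_{\mathrm{TV}}\le\cosh(M\rho_\Delta)$ and every higher moment at most $2^{-m}\norm{\sigma}_{\mathrm{TV}}$. Your duality route is only sketched. Note that the $\chi^2$ constraint dualizes to $\norm{f}_{H^2}$, not the disk sup-norm of Claim~\ref{clm:tradeoff_optimality}, and the slit must be cut off away from $z=1$ for $\ell^2$-summability. Still, this is not where the argument breaks.

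The genuine gap is realizability. Nonnegativity of $\rho_1$ is not the relevant condition: $\rho_0,\rho_1$ must lie in the convex hull of $\{\sum_w p_w\delta_{1-p_w}:P\in\cP_\Delta^{\geq 2}\}$. Any NTP with a heavy token $1-z$, $z<1/2$, necessarily puts mass $z$ on atoms in $(1/2,1)$.

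Your concrete fix with two-token NTPs $(1-z,z)$ does not work. The reflected atom at $1-z\in[1/2,1-\Delta]$ contributes $\int z(1-z)^{m-1}\dif\sigma(z)$ to the $m$-th moment. This is not controlled by the moments of $\sigma$ and decays only like $(1-\Delta)^m$, so the claim that it ``only changes constants'' is unfounded.

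In fact no priors on two-token NTPs can give the theorem. Take $f(z)=(1-2z)/(1-(1-\eta)z)$ with $\eta\in(0,1)$, Taylor-truncated as in Lemma~\ref{lem:polynomial_tradeoff}. Then $f(0)=1$, $\sup_{\overline{\bbD}}\abs{f}\le 3/\eta$, and the bias functional of Lemma~\ref{lem:bias_variance} at $P=(1-z,z)$ is
\begin{align*}
    (1-z)f(z)+zf(1-z)=\frac{\eta(1-2z)^2}{(1-z+\eta z)(z+\eta(1-z))}\in\left[0,\frac{2\eta}{\Delta}\right],\qquad z\in[\Delta,1/2].
\end{align*}
Median-of-means therefore yields error $O(\eta/\Delta+\eta^{-1}\sqrt{\log(e/\delta)/N})=O_\delta(\Delta^{-1/2}N^{-1/4})$ on this submodel. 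This is $o(N^{-c\sqrt{\Delta}})$ as $N\to\infty$ whenever $c\sqrt{\Delta}<1/4$.

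Adding uniform NTPs, as in your fallback, only creates atoms at $1-1/k\ge 1/2$. They cancel a two-token reflection exactly only when $z=1/k$, which leaves about $1/\Delta$ usable points. A mass-one signed measure on $n$ nonzero points cannot have $n$ vanishing moments, so these points cannot carry $M\to\infty$ vanishing moments. Saying you will ``approximate'' is not a construction.

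The missing idea is the paper's signed construction on NTP space. For rational $z=a/b\in[\Delta,1/2]$, let $P_z=(1-z,1/b,\dots,1/b)$ with $a$ light tokens, and let $P'_z$ be uniform on $b$ tokens. Then $\Sigma_z=(\delta_{P_z}-z\delta_{P'_z})/(1-z)$ has $g_{\Sigma_z}=\phi_z$ exactly, because the light atoms cancel, and $\norm{\Sigma_z}_{\mathrm{TV}}\le 3$. Integrating $\Sigma_z$ against $\sigma$, after rational rounding, gives $\Sigma$. Then $\Lambda_0=\abs{\Sigma}/\norm{\Sigma}_{\mathrm{TV}}$ and $\Lambda_1=(1-2\tau)\Lambda_0+2\tau\Sigma$ are genuine priors whenever $\tau\lesssim 1/\norm{\Sigma}_{\mathrm{TV}}$, and the density difference is exactly $\tau(g_\Sigma-1)$.
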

The improvement over the naive parametric lower bound $\Omega_\delta(1/\sqrt{N})$ is due to the use of mixtures over the nuisance NTP class rather than fixing a single sequence of $P_{1:N}$. We consider testing
\begin{align*}
    H_0:\ \varepsilon=\frac{1}{2}+\tau,\ P_{1:N}\iid \Lambda_0\qquad \textnormal{versus}\qquad H_1:\ \varepsilon=\frac{1}{2},\ P_{1:N}\iid \Lambda_1,
\end{align*}
where $\Lambda_0$ and $\Lambda_1$ are particularly chosen priors on $\cP_\Delta^{\geq 2}$ that hide lower-order information through moment matching.

The construction of $\Lambda_0$ and $\Lambda_1$ is as follows. For $P=(p_1,p_2,\cdots,p_J)\in \cP_\Delta^{\geq 2}$, we denote its corresponding purely watermarked pivotal density and CDF by
\begin{align*}
    g_P(r)=\sum_{j=1}^J r^{1/p_j-1},\quad G_P(r)=\sum_{j=1}^J p_j r^{1/p_j},\quad r\in[0,1].
\end{align*}
For a signed measure $\Sigma$ over $\cP_\Delta^{\geq 2}$, we use the notation $g_\Sigma$ (resp. $G_\Sigma$) to denote the mixture density $\int_{P\in\cP_\Delta^{\geq 2}} g_P\dif \Sigma(P)$ (resp. mixture CDF $\int_{P\in\cP_\Delta^{\geq 2}} G_P\dif \Sigma(P)$).
\begin{Proposition}\label{prop:matching_gumbel_pivotal}
    For any $\Delta\in(0,1/2)$ and $M\in \bbZ_+$, define $\rho_\Delta=\arccosh(\frac{1+2\Delta}{1-2\Delta})$. There exists a signed measure $\Sigma$ on $\cP_\Delta^{\geq 2}$ that satisfies
    \begin{enumerate}
        \item normalization:
        \begin{align*}
            \Sigma(\cP_\Delta^{\geq 2})=1,\quad 1\leq \norm{\Sigma}_{\mathrm{TV}}\leq 3\cosh\left(M\rho_\Delta\right);
        \end{align*}
        \item approximate uniformity:
        \begin{align*}
            \norm{g_\Sigma-1}_{L^2[0,1]}\leq 2^{-M} e^{M\rho_\Delta}.
        \end{align*}
    \end{enumerate}
\end{Proposition}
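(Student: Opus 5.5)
The plan is to pass to Laguerre coordinates, where the $L^2$ condition becomes a statement about moments, and then to build $\Sigma$ as a finite signed combination of simple NTP distributions by polynomial interpolation at Chebyshev nodes. The key steps are: an exact moment identity, a two-atom reduction, a Chebyshev interpolation construction for the weights, and a tail estimate. I expect the tail estimate to be the main obstacle, and I am not certain the version proposed below closes.

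\textbf{Step 1: moment identity.} Expand $g_\Sigma-1$ in the orthonormal system $\{\ell_m\}$ of \eqref{eq:ell_orthogonal}. For an exponent $a>-1$, substituting $x=-\log r$ and using the Laplace transform of Laguerre polynomials gives
\begin{align*}
\int_0^1 r^{a}\ell_m(r)\dif r=\int_0^\infty e^{-(a+1)x}L_m(x)\dif x=\frac{a^m}{(a+1)^{m+1}}.
\end{align*}
Taking $a=1/p_j-1$, so that $a/(a+1)=1-p_j$ and $1/(a+1)=p_j$, this yields $\langle g_P,\ell_m\rangle=\sum_j p_j(1-p_j)^m$. Since $\langle 1,\ell_m\rangle=\indi\{m=0\}$, Parseval gives
\begin{align*}
\norm{g_\Sigma-1}_{L^2[0,1]}^2=\sum_{m\ge 0}\Bigl(\int \nu_P(z^m)\dif\Sigma(P)-\indi\{m=0\}\Bigr)^2,\qquad \nu_P\define\sum_j p_j\delta_{1-p_j}.
\end{align*}
For $m=0$ the term is $\Sigma(\cP_\Delta^{\geq 2})-1$, which vanishes by the normalization. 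So it suffices to construct $\Sigma$ whose pushforward $\nu_\Sigma=\int\nu_P\dif\Sigma$ satisfies two conditions:
\begin{enumerate}
\item[(i)] $\nu_\Sigma(z^m)=0$ for $1\le m\le M$;
\item[(ii)] the tail $\sum_{m>M}\nu_\Sigma(z^m)^2$ is at most $4^{-M}e^{2M\rho_\Delta}$.
\end{enumerate}

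\textbf{Step 2: two-atom reduction.} Restrict $\Sigma$ to two-atom distributions $P_p=(p,1-p)$ with $p\in[\Delta,1/2]$; these lie in $\cP_\Delta^{\geq 2}$. Then $\nu_{P_p}(z^m)=p(1-p)^m+(1-p)p^m=p(1-p)\,\phi_m(p)$, where $\phi_m$ is a polynomial. After dividing by $p(1-p)$ and symmetrizing under $p\leftrightarrow 1-p$, the natural variable is $s=(1-2p)^2$, which ranges over $[0,(1-2\Delta)^2]$. I would attempt one of two equivalent reformulations. The first is to rewrite the moment conditions as conditions on an interval on which the extremal point sits at distance $\frac{1+2\Delta}{1-2\Delta}$ in the affine-normalized variable. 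The second is the direct one: the interval $[2\Delta,1]$ has endpoint ratio giving exactly $\arccosh\frac{1+2\Delta}{1-2\Delta}$ when $0$ is mapped to $-\frac{1+2\Delta}{1-2\Delta}$.

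\textbf{Step 3: Chebyshev interpolation for the weights.} Pick $M+1$ Chebyshev nodes $p_0,\dots,p_M$ in the relevant interval and solve the linear system for weights $c_k$ such that $\Sigma=\sum_k c_k\delta_{P_{p_k}}$ satisfies the normalization together with (i). The weights are then Lagrange-interpolation weights evaluated at the extrapolation point corresponding to the Gaussian-free constant term. The standard bound for the Lebesgue function of Chebyshev nodes extrapolated outside $[-1,1]$ gives $\sum_k|c_k|\le |T_M(x_0)|+O(1)\le 3\cosh(M\rho_\Delta)$ with $x_0=\frac{1+2\Delta}{1-2\Delta}$. This is the TV bound in item 1. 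The lower bound $\norm{\Sigma}_{\mathrm{TV}}\ge 1$ is immediate from $\Sigma(\cP_\Delta^{\geq 2})=1$.

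\textbf{Step 4: tail bound (main obstacle).} For $m>M$ the moments do not vanish, and the atoms $1-p_k$ can be close to $1-\Delta$. A crude bound $\norm{\Sigma}_{\mathrm{TV}}\cdot\sup z^m$ therefore does not yield $2^{-M}$. What I would try first is the following. Use (i) to replace $z^m$ by $z^m-q(z)$ for the best degree-$M$ polynomial approximation $q$ of $z^m$ on the support. Equivalently, write $\nu_\Sigma(z^m)$ as a divided difference of $z\mapsto z^m$ at the nodes, times the interpolation kernel. The Chebyshev structure then gives the geometric factor $2^{-M}$, namely the leading coefficient $2^{1-M}$ of the monic Chebyshev polynomial, multiplied by $e^{M\rho_\Delta}$ coming from the extrapolation. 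Summing the squares over $m>M$ should then converge, because the divided differences of $z^m$ grow at most polynomially in $m$ relative to the controlled factor.

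If this fails because the atoms at $1-p$ are too close to $1$, the fallback is to choose the node interval so that the symmetric variable absorbs both atoms. In that case only $s\le(1-2\Delta)^2<1$ enters the tail.
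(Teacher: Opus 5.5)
\textbf{The restriction to two-atom distributions in Step 2 already makes the stated bound unattainable.} So no choice of Chebyshev nodes or weights can close Step 4, and neither can your fallback.

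Write $\Sigma=\int\delta_{P_p}\dif\mu(p)$ with $\mu$ a signed measure on $[\Delta,1/2]$ of total mass $1$, and set $d_m=\nu_\Sigma(z^m)-\indi\{m=0\}$. Since $\sum_{m=0}^{L}[p(1-p)^m+(1-p)p^m]=2-(1-p)^{L+1}-p^{L+1}$, you get
\begin{align*}
\sum_{m=0}^{L}d_m=1-\int\bigl[(1-p)^{L+1}+p^{L+1}\bigr]\dif\mu(p),\qquad \abs*{\int\bigl[(1-p)^{L+1}+p^{L+1}\bigr]\dif\mu(p)}\le 2\norm{\Sigma}_{\mathrm{TV}}e^{-\Delta(L+1)}.
\end{align*}
Take $L+1=\lceil\log(4\norm{\Sigma}_{\mathrm{TV}})/\Delta\rceil$. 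Then Cauchy--Schwarz and Bessel's inequality give
\[
\norm{g_\Sigma-1}_{L^2[0,1]}\ge\norm{d}_{\ell^2}\ge \frac{1}{2\sqrt{L+1}}.
\]
Under the budget $\norm{\Sigma}_{\mathrm{TV}}\le 3\cosh(M\rho_\Delta)$, this lower bound is of order $\sqrt{\Delta/(M\rho_\Delta+1)}$ and decays only like $M^{-1/2}$. The target $2^{-M}e^{M\rho_\Delta}$ decays exponentially when $\Delta<1/18$. For example, with $\Delta=10^{-4}$ and $M=20$ the lower bound is about $3\cdot 10^{-3}$ while the target is about $2\cdot 10^{-6}$.

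The underlying reason is that $g_P(1)=\sum_m\nu_P(z^m)$ equals the number of atoms of $P$. Every two-atom mixture therefore takes the value $2$ at $r=1$ instead of $1$. Because the Laguerre nodes $1-p$ reach up to $1-\Delta$, this discrepancy must be spread over only $O(\Delta^{-1}\log\norm{\Sigma}_{\mathrm{TV}})$ coefficients. Relatedly, the factor $2^{-M}$ is not a Chebyshev leading coefficient. In your divided-difference heuristic, the nodes of $\nu_\Sigma$ near $1-\Delta$ produce decay like $(1-\Delta)^m$, and reparametrizing by $s=(1-2p)^2$ leaves those nodes unchanged.

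\textbf{The missing idea is the paper's exact cancellation of near-$1$ nodes, using distributions with many small atoms.} For rational $z=a/b\in[\Delta,1/2]$, take $P_z=(1-z,1/b,\dots,1/b)$ with $a$ small atoms, let $P'_z$ be uniform on $b$ atoms, and set $\Sigma_z=(\delta_{P_z}-z\delta_{P'_z})/(1-z)$.

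Then the $r^{b-1}$ terms cancel and $g_{\Sigma_z}=\frac{1}{1-z}r^{z/(1-z)}$. In your notation this is $\nu_{\Sigma_z}=\delta_z$, and $\norm{\Sigma_z}_{\mathrm{TV}}=\frac{1+z}{1-z}\le 3$.

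The problem thus reduces to plain moment matching on $[\Delta,1/2]$. One needs a signed measure $\sigma$ there with $\int z^m\dif\sigma=\indi\{m=0\}$ for $m\le M$ and $\norm{\sigma}_{\mathrm{TV}}\le\cosh(M\rho_\Delta)$. Such a $\sigma$ comes from the Chebyshev extremal property (the affine map to $[-1,1]$ sends $0$ to $-\frac{1+2\Delta}{1-2\Delta}$), followed by Hahn--Banach, Riesz representation, and Carath\'eodory.

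The tail coefficients are then at most $2^{-m}\norm{\sigma}_{\mathrm{TV}}$ simply because the support lies in $[\Delta,1/2]$; this is where $2^{-M}$ comes from. The last step rounds the finitely many support points of $\sigma$ to rationals, using $\norm{\phi_z-\phi_w}_{L^2[0,1]}\le\sqrt3\abs{z-w}$.
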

See Appendix~\ref{apdx:proof_matching_gumbel_pivotal} for the proof. With $\Sigma$ specified by Proposition~\ref{prop:matching_gumbel_pivotal}, we define  
\begin{align*}
    \Lambda_0=\frac{\abs{\Sigma}}{\norm{\Sigma}_\mathrm{TV}}=\frac{\Sigma^++\Sigma^-}{\norm{\Sigma}_\mathrm{TV}},\quad \Lambda_1=(1-2\tau)\Lambda_0+2\tau \Sigma.
\end{align*}
Then, $\Lambda_0$ is obviously a probability measure on $\cP_\Delta^{\geq 2}$, while $\Lambda_1$ satisfies
\begin{align*}
    \Lambda_1(\cP_\Delta^{\geq 2})=1-2\tau+2\tau=1,\quad \dif \Lambda_1=\left(\frac{1-2\tau}{\norm{\Sigma}_\mathrm{TV}}+2\tau\right)\dif \Sigma^++\left(\frac{1-2\tau}{\norm{\Sigma}_\mathrm{TV}}-2\tau\right)\dif \Sigma^-.
\end{align*}
Therefore, to make $\Lambda_1$ a genuine probability measure, we need
\begin{align*}
    (1-2\tau)/\norm{\Sigma}_\mathrm{TV}-2\tau\geq 0\quad \Longleftrightarrow\quad 0\leq \tau\leq \frac{1}{2+2\norm{\Sigma}_\mathrm{TV}},
\end{align*}
which is satisfied once $0\leq \tau\leq (1/8)e^{-M\rho_\Delta}$. Now, consider two distributions
\begin{align*}
    Q_0=(\frac{1}{2}+\tau) \mathrm{Unif}(0,1)+(\frac{1}{2}-\tau) G_{\Lambda_0},\quad Q_1=\frac{1}{2}\mathrm{Unif}(0,1)+\frac{1}{2}G_{\Lambda_1},
\end{align*}
and let $q_0$ and $q_1$ be their densities, respectively. We have
\begin{align*}
    q_1(r)-q_0(r)=-\tau+g_{\frac{1}{2}\Lambda_1-(\frac{1}{2}-\tau)\Lambda_0}=\tau (g_\Sigma-1).
\end{align*}
Meanwhile, we have $q_0(r)\geq 1/2$, and therefore,
\begin{align*}
    \chi^2(Q_1,Q_0)=\int\frac{(q_1-q_0)^2}{q_0}\leq 2\tau^2\norm{g_\Sigma-1}_{L^2[0,1]}^2\leq 2\tau^2 2^{-2M}e^{2M\rho_\Delta}.
\end{align*}
The indistinguishability between $Q_0^{\otimes N}$ and $Q_1^{\otimes N}$ (resulting an error lower bound $\tau$) is thus reduced to
\begin{align*}
    \tau^2 2^{-2M}e^{2M\rho_\Delta}\leq \frac{c_\delta}{N},\quad 0\leq \tau \leq (1/8)e^{-M\rho_\Delta}
\end{align*}
that is,
\begin{align*}
    \tau \lesssim \min\left\{e^{M(\log(2)-\rho_\Delta)}\sqrt{\frac{c_\delta}{N}},\,e^{-M\rho_\Delta}\right\}.
\end{align*}
When $0<\rho_\Delta\leq \log(2)$, that is, $0<\Delta\leq 1/18$, the first term is non-decreasing, while the second term is non-increasing. Setting $M=1\vee\floor{\log(c_\delta/N)/(2\log(2))}$ to balance these two terms, we maximize the right-hand side and obtain (recall also that $\tau\leq 1/2$)
\begin{align*}
    \tau\lesssim 1\wedge c_\delta N^{-\rho_\Delta/(2\log(2))}\asymp 1\wedge c_\delta N^{-\Theta(\sqrt{\Delta})}.
\end{align*}
For $\Delta\in(1/18,1)$, we use the trivial parametric lower bound $\Omega_\delta(N^{-1/2})$. Combining the two terms, we obtain a minimax lower bound of $1\wedge \Omega_\delta(N^{-\{(1/2)\wedge \Theta(\sqrt{\Delta})\}})$.

\subsection{Lower Bound for Estimating with Full Observation}\label{sec:lower_bound_full}
We then move on to the full observation setting, in which we present a lower bound of:
\begin{Theorem}\label{thm:lower_bound_full}
    Consider estimating $\varepsilon$ based on full observation $\{(U_t,w_t)\}_{t\in[N]}$. For any $\cW$ with $\abs{\cW}\geq 2$, any $\Delta\in(0,1-1/\abs{\cW})$, and $\delta\in(0,1)$, it holds that
    \begin{align*}
        \inf_{\widehat{\varepsilon}}\, \sup_{\varepsilon\in[0,1],\, \{P_t\}_{t\in[N]}\subset \cP_\Delta^{\cW}}\, P_{\{(U_t,w_t)\}_{t\in[N]}}\left(\abs{\widehat{\varepsilon}-\varepsilon}\geq c_{\delta,\abs{\cW}}\left(1\wedge\sqrt{\frac{1}{N\Delta^{\abs{\cW}-1}}}\right)\right)>\delta,
    \end{align*}
    for a constant $c_{\delta,\abs{\cW}}>0$ that depends only on $\delta$ and $\abs{\cW}$.
\end{Theorem}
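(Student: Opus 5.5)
The plan is Le Cam's two-point method with \emph{mixtures over the nuisance} NTP distributions, as in Section~\ref{sec:lower_bound_pivot}, now applied to the joint density of $(U,w)$. For $P\in\cP_\Delta^{\cW}$ and proportion $\varepsilon$, the density of one full observation with respect to Lebesgue $\times$ counting measure is
\begin{align*}
    q_{\varepsilon,P}(u,w)=(1-\varepsilon)p_w+\varepsilon\,\indi\{w=\cS(u,P)\}.
\end{align*}
I will test $H_0:\varepsilon=\tfrac12+\tau,\ P_{1:N}\iid\Lambda_0$ against $H_1:\varepsilon=\tfrac12,\ P_{1:N}\iid\Lambda_1$. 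Writing $q_i$ for the corresponding mixed densities, I want the difference $q_1-q_0$ to vanish off a set of mass $O_{\abs{\cW}}(\Delta^{\abs{\cW}-1})$. The natural candidate is
\begin{align*}
    R=\{(u,w):B(u,w)=1\}.
\end{align*}
By Lemma~\ref{lem:different_bern} and its proof, no Gumbel--max choice ever lands in $R$, and $R$ has mass $\Delta^{\abs{\cW}-1}$ under the independent model. On $R$ the densities are $(1-\varepsilon)\E_{\Lambda_i}p_w$, so they differ by only $O(\tau)$ if the priors have comparable means.

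\textbf{Step 1 (exact matching off $R$).} I need priors with
\begin{align*}
    (\tfrac12-\tau)\E_{\Lambda_0}p_w+(\tfrac12+\tau)\Lambda_0(\cS(u,P)=w)=\tfrac12\E_{\Lambda_1}p_w+\tfrac12\Lambda_1(\cS(u,P)=w)\quad\textnormal{for all }(u,w)\notin R.
\end{align*}
I would first do $\abs{\cW}=2$ with $P=(p,1-p)$ and $p\in[\Delta,1-\Delta]$. Here $\cS(u,P)=1$ iff $p\geq s(u)\define \log u_1/(\log u_1+\log u_2)$. Moreover $(u,1)\notin R$ iff $s(u)\leq 1-\Delta$, and symmetrically for $w=2$, so the condition is only needed for $s\in[\Delta,1-\Delta]$. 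With $F_i(s)=\Lambda_i(p\geq s)$ and $m_i=\E_{\Lambda_i}p$, the condition becomes the affine relation
\begin{align*}
    F_1(s)=(1+2\tau)F_0(s)+(m_0-m_1)-2\tau m_0\quad\textnormal{on }[\Delta,1-\Delta].
\end{align*}
I take $\Lambda_0$ with a smooth positive density bounded away from zero on $[\Delta,1-\Delta]$, plus atoms at the endpoints. I then define $\Lambda_1$ by the displayed relation on the interior and fix its endpoint atoms so that $F_1$ is a valid survival function and its mean equals the prescribed $m_1$. This is a small fixed-point or linear system in the atom weights, solvable for $\tau\leq c$. For general $\abs{\cW}$, $\{w=\cS(u,P)\}$ is determined by the $\abs{\cW}-1$ ratios $\log u_v/\log u_w$. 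I would try a product-type or iterated version of the same survival-function perturbation, supported on distributions whose coordinates are all $\gtrsim_{\abs{\cW}}1$, apart from the direction in which one coordinate approaches $1-\Delta$.

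\textbf{Step 2 (chi-square).} Since $q_1=q_0$ off $R$, and on $R$ we have $\abs{q_1-q_0}\lesssim\tau\max_w\E p_w$ and $q_0\geq(\tfrac12-\tau)\E_{\Lambda_0}p_w\gtrsim_{\abs{\cW}}1$ by the support restriction,
\begin{align*}
    \chi^2(Q_1,Q_0)\lesssim_{\abs{\cW}}\tau^2\,\mathrm{Leb}\otimes\#(R)\lesssim_{\abs{\cW}}\tau^2\Delta^{\abs{\cW}-1}.
\end{align*}
Observations are i.i.d. under each hypothesis because the $P_t$ are drawn i.i.d. from the priors. Hence $\chi^2(Q_1^{\otimes N},Q_0^{\otimes N})\leq(1+\chi^2)^N-1$ stays bounded by a $\delta$-dependent constant when $\tau\asymp c_{\delta,\abs{\cW}}/\sqrt{N\Delta^{\abs{\cW}-1}}$. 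Capping $\tau$ at a constant gives the $1\wedge$ form, and Le Cam's lemma yields the error probability exceeding $\delta$. For $\Delta$ bounded away from $0$, where the construction's constants may degrade, I fall back on the flat-NTP parametric bound already proved, absorbing $\Delta^{\abs{\cW}-1}\asymp_{\abs{\cW}}1$ into $c_{\delta,\abs{\cW}}$.

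\textbf{Main obstacle.} The hard step is Step 1 for $\abs{\cW}\geq3$: exhibiting genuine nonnegative priors so that the Gumbel--max selection probability, as a function of $u$, is exactly affine in the independent-token probability outside $R$. If exact matching fails there, I would settle for matching up to an error whose squared $L^2(q_0)$-norm is $O(\tau^2\Delta^{\abs{\cW}-1})$. Alternatively, I would enlarge the exceptional set to a region of comparable mass $\Theta_{\abs{\cW}}(\Delta^{\abs{\cW}-1})$ near the corners $u_w\approx0$, on which mismatch is permitted.
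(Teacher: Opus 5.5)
\textbf{There is a genuine gap, and it is already in Step 1.} The target ``$q_1=q_0$ off $R$'' is not merely hard for $\abs{\cW}\geq 3$; it is impossible for every $\abs{\cW}\geq 2$ whenever $\Delta\le 1/2$. Under both hypotheses the $U$-marginal is $\mathrm{Unif}(0,1)^{\otimes\abs{\cW}}$, so $\sum_{w}(q_1-q_0)(u,w)=0$ for every $u$. Since $\beta_\Delta\le 1$, each $u$ has at most one token with $B(u,w)=1$: with $a_v=-\log u_v$, two such tokens $w\neq w'$ would need $a_w>a_{w'}/\beta_\Delta\geq a_{w'}$ and, symmetrically, $a_{w'}>a_w$. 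So if $q_1-q_0$ vanishes off $R$, it also vanishes on $R$, i.e.\ $Q_0=Q_1$. This contradicts $Q_0(R)-Q_1(R)=-\tau\Delta^{\abs{\cW}-1}$, which follows from Lemma~\ref{lem:different_bern}.

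You can see the failure directly in your binary computation. For $s(u)<\Delta$, every $P$ in the support picks token $1$ and $(u,1)\notin R$, which forces $m_1=(1-2\tau)m_0+2\tau$. For $s(u)>1-\Delta$, every $P$ picks token $2$ and $(u,2)\notin R$, which forces $m_1=(1-2\tau)m_0$. So the claim that ``the condition is only needed for $s\in[\Delta,1-\Delta]$'' is wrong. Imposing it only there silently allows mismatch on $\{s(u)\notin[\Delta,1-\Delta]\}\times\{1,2\}$, a set strictly larger than $R$. With that reinterpretation the binary case does close: the mismatch is $O(\tau)$ on mass $4\Delta$, and even the symmetric two-point prior on $(1-\Delta,\Delta)$, $(\Delta,1-\Delta)$ with $\Lambda_1=\Lambda_0$ works. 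But Step 2 must then be redone on the enlarged set, and the template you plan to generalize is infeasible.

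\textbf{For $\abs{\cW}\geq 3$ you give no construction, and that construction is the whole proof.} The exceptional set must be enlarged beyond $R$ so that its fibers contain at least two tokens, and you need explicit priors realizing it. The paper uses the two-level family $P^{(w,S)}$, with $p_w=1/(1+\beta_\Delta\abs{S})$ and $p_v=\beta_\Delta/(1+\beta_\Delta\abs{S})$ for $v\in S$. Under $P^{(w,S)}$, Gumbel--max selects $w$ iff $u_v\le u_w^{\beta_\Delta}$ for all $v\in S$.

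Inclusion--exclusion with $\Xi_w=\sum_{\emptyset\neq S\subseteq\cW\backslash\{w\}}(-1)^{\abs{S}+1}\delta_{(w,S)}$ then collapses the watermarked density:
\begin{enumerate}
\item at token $w$ it becomes $1-\indi\{u_v>u_w^{\beta_\Delta}\ \forall v\neq w\}$;
\item at $v\neq w$ it becomes $\indi\{u_w\le u_v^{1/\beta_\Delta},\ u_x>u_v\ \forall x\notin\{w,v\}\}$.
\end{enumerate}
Both are indicators of sets of measure $O_{\abs{\cW}}(\beta_\Delta^{\abs{\cW}-1})$. The symmetrization $\Xi=\abs{\cW}^{-1}\sum_w\Xi_w$ makes the non-watermarked density identically $1/\abs{\cW}$.

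Taking $\Lambda_0$ uniform on $\cI$ and $\Lambda_1=(1-2\tau)\Lambda_0+2\tau\Xi$ (a probability measure for $\tau\lesssim_{\abs{\cW}}1$) gives $q_1-q_0=\tau(s_\Xi-r_\Xi)$ and $\chi^2(Q_1,Q_0)\lesssim_{\abs{\cW}}\tau^2\beta_\Delta^{\abs{\cW}-1}$. The paper bounds this in $L^2$. For the symmetrized $\Xi$, however, the difference actually vanishes outside those small sets, so this is precisely your ``enlarged exceptional set'' fallback made concrete. Naming that fallback without producing such a signed mixture leaves the theorem unproved.
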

Following the same spirit as in Section~\ref{sec:lower_bound_pivot}, we prove Theorem~\ref{thm:lower_bound_full} by constructing mixtures over the nuisance NTP class $\cP_\Delta^{\cW}$ and reducing the estimation lower bound to the hardness of testing a pair of composite hypotheses. 

However, since $\cW$ is known with a prescribed cardinality under full observation, the construction of mixtures becomes more restricted. We specify a modified construction of mixtures as follows. For $\Delta\in(0,1/2]$, let $\beta_\Delta=\Delta/(1-\Delta)\in(0,1]$. For each $w\in \cW$ and $\emptyset\neq S\subseteq \cW\backslash\{w\}$, define $P^{(w,S)}$ as a probability distribution over $\cW$ specified by
\begin{align}\label{eq:pws}
    P^{(w,S)}=(p^{(w,S)}_u)_{u\in \cW},\quad p^{(w,S)}_u=\begin{dcases}
        \frac{1}{1+\beta_\Delta\abs{S}},\quad &u=w;\\
        \frac{\beta_\Delta}{1+\beta_\Delta\abs{S}},\quad &u\in S;\\
        0,\quad &\textnormal{otherwise}.
    \end{dcases}
\end{align}
We have $P^{(w,S)}\in\cP_\Delta^{\cW}$ since
\begin{align*}
    \max_{u\in\cW} p^{(w,S)}_u=\frac{1}{1+\beta_\Delta\abs{S}}\leq \frac{1}{1+\beta_\Delta}=\Delta\leq 1-\Delta.
\end{align*}
Let 
\begin{align}\label{eq:index_set}
    \cI=\left\{(w,S):\ w\in\cW,\ \emptyset\neq S\subseteq \cW\backslash\{w\}\right\}.
\end{align}
We have $\abs{\cI}=\abs{\cW}(2^{\abs{\cW}-1}-1)$. Instead of putting mixing priors on entire NTP class $\cP_\Delta^{\cW}$, we construct priors $\Lambda_0$ and $\Lambda_1$ on the subset $\{P^{(w,S)}:\, (w,S)\in\cI\}$. Then, it suffices to find priors on the index set $\cI$ whose induced priors over $\{P^{(w,S)}:\, (w,S)\in\cI\}$ can almost hide the difference in watermark proportions. The detailed prior construction and the proof of Theorem~\ref{thm:lower_bound_full} are deferred to Appendix~\ref{apdx:proof_lower_bound_full}.

\section{Discussion}\label{sec:discussion}
Several discussions are in order.
\paragraph{Proportion estimation under other watermarking mechanisms.}
This paper considers the Gumbel--max watermarking. There are other watermarking mechanisms, including the green-red list \cite{kirchenbauer2023watermark} and the inverse transform \cite{kuditipudi2024robust}. As shown in \cite{li2025optimal}, the proportion of watermarks cannot be consistently estimated under the green-red list mechanism even with side information. The pivotal distribution under the inverse transform mechanism admits a closed form only in the infinite-sample asymptotic regime, thereby hindering the study of sample complexity. Consequently, there is no natural generalization from the Gumbel--max mechanisms to the others, and the proportion estimation problem under different watermarking mechanisms should plausibly be investigated case-by-case.

\paragraph{Optimality of Lemma~\ref{lem:polynomial_tradeoff}.}
In Section~\ref{sec:upper_bound_pivot}, we quantify the bias--variance trade-off through Lemma~\ref{lem:polynomial_tradeoff} which essentially states that:

\textit{For any $\Delta\in(0,1)$ and any absolute constant $A$ that is large enough, there exists a real polynomial $f$ with $f(0)=1$ such that $\sup_{z\in\overline{\bbD}}\, \abs{f(z)}\leq A$ and $\sup_{z\in[\Delta,1)}\abs{f(z)}\leq O(A^{-2\kappa_\Delta/(1-2\kappa_\Delta)})$.}

We claim that the above trade-off scaling is unavoidable even for all complex-valued functions that are holomorphic in $\bbD$ and continuous in $\overline{\bbD}$, as justified by the following result (proved in Appendix~\ref{apdx:proof_tradeoff_optimality}):
\begin{Claim}\label{clm:tradeoff_optimality}
    For any $A>0$ and $\Delta\in(0,1)$, for $f\in H(\bbD)\cap C(\overline{\bbD})$ that satisfies $f(0)=1$ and $\sup_{z\in\overline{\bbD}}\,\abs{f(z)}\leq M$, it must hold that $\sup_{z\in[\Delta,1)}\abs{f(z)}\geq A^{-2\kappa_\Delta/(1-2\kappa_\Delta)}$, where $\kappa_\Delta=(2/\pi)\arctan(\sqrt{\Delta})$.
\end{Claim}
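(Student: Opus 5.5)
The plan is a two-constants (harmonic measure) argument on the slit disk $\Omega=\bbD\setminus[\Delta,1)$. (The bound in the statement should read $\sup_{\overline{\bbD}}\abs{f}\leq A$.)

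\textbf{Reductions.} Since $f(0)=1$, the maximum principle forces $A\geq 1$. Let $m=\sup_{z\in[\Delta,1)}\abs{f(z)}$. We have $m>0$: otherwise $f$ vanishes on a segment with accumulation points inside $\bbD$, hence $f\equiv 0$, contradicting $f(0)=1$.

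\textbf{Two-constants inequality.} On $\Omega$ the function $\log\abs{f}$ is subharmonic. Its boundary values are at most $\log A$ on the unit circle and at most $\log m$ on both sides of the slit. Let $\omega(z)$ be the harmonic measure of the slit relative to $\Omega$, and set $h=(1-\omega)\log A+\omega\log m$. Since $f$ is continuous on $\overline{\bbD}$, the limsup of $\log\abs{f}-h$ is at most $0$ at every boundary point. The only possible exceptions are the two endpoints $\Delta$ and $1$, a polar set that the extended maximum principle ignores because $\log\abs{f}$ is bounded above. Evaluating at $0$ gives
\begin{align*}
    0=\log\abs{f(0)}\leq (1-\omega(0))\log A+\omega(0)\log m.
\end{align*}
It therefore suffices to show $1-\omega(0)=2\kappa_\Delta$, i.e.\ the unit circle has harmonic measure $2\kappa_\Delta$ at $0$. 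Rearranging then gives $m\geq A^{-2\kappa_\Delta/(1-2\kappa_\Delta)}$.

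\textbf{Computing the harmonic measure.} Harmonic measure is conformally invariant, so I would map $\Omega$ to a half-plane in three steps.
\begin{enumerate}
    \item The Koebe map $k(z)=4z/(1-z)^2$ sends $\bbD$ onto $\bbC\setminus(-\infty,-1]$, with the unit circle going to the ray $(-\infty,-1]$ and $k(0)=0$. Since $k$ is increasing on $[0,1)$, the slit $[\Delta,1)$ goes to $[a,\infty)$ with $a=4\Delta/(1-\Delta)^2$. So $k(\Omega)=\bbC\setminus\big((-\infty,-1]\cup[a,\infty)\big)$.
    \item The Möbius map $w\mapsto (w+1)/(a-w)$ sends $-1\mapsto 0$, $a\mapsto\infty$ and $\infty\mapsto -1$. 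Hence $(-\infty,-1]$ goes to $[-1,0]$, $[a,\infty)$ goes to $(-\infty,-1]$, and the domain becomes $\bbC\setminus(-\infty,0]$.
    \item The principal square root then gives the right half-plane. The circle's image is the segment $\{iy:\abs{y}\leq 1\}$, the slit's image is $\{iy:\abs{y}\geq1\}$, and the origin goes to $x_0=1/\sqrt{a}$.
\end{enumerate}
By the Poisson kernel, the harmonic measure of $i[-1,1]$ at $x_0>0$ is $(2/\pi)\arctan(1/x_0)=(2/\pi)\arctan(\sqrt a)$. Now $\sqrt a=2\sqrt\Delta/(1-\Delta)$, and the identity $2\arctan x=\arctan\big(2x/(1-x^2)\big)$ for $x\in(0,1)$ gives $(2/\pi)\arctan\sqrt a=(4/\pi)\arctan\sqrt\Delta=2\kappa_\Delta$, as required.

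\textbf{Main obstacle.} The hard part is the bookkeeping in the conformal map: checking which boundary arcs correspond to which, that both sides of the slit land on $\abs{y}\geq1$, and that the branch choices are consistent. The boundary regularity at the slit endpoints matters only through the polar-set remark in the two-constants step.
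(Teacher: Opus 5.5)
Your argument is correct and is essentially the paper's proof. The paper composes $f$ with an explicit conformal map $\Psi$ from the strip $\{0<\Re w<1\}$ onto $\bbD\setminus[\Delta,1)$ (slit $\leftrightarrow \Re w=0$, circle $\leftrightarrow \Re w=1$, $2\kappa_\Delta\mapsto 0$) and applies Hadamard's three-lines theorem; this is exactly your two-constants inequality with the circle's harmonic measure at $0$ equal to $2\kappa_\Delta$, the value your Koebe--M\"obius--square-root computation recovers. You are also right that the hypothesis should read $\sup_{\overline{\bbD}}\abs{f}\leq A$, and your endpoint remark is harmless, indeed unnecessary, since continuity already gives $\abs{f(1)}\leq m\leq A$.
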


\paragraph{Adaptivity.}
Our estimators in Section~\ref{sec:upper_bound_pivot} and Section~\ref{sec:upper_bound_full} require knowing the regularity parameter $\Delta$, but can be made adaptive to $\Delta\in[0,1]$ using Lepskii's method \cite{lepskii1991problem, lepskii1992asymptotically}. Take the pivot-based setting as an example. We can partition $[0,1]$ into grids of $\Delta_j\asymp j/\log^2(N)$, and run our estimator for each of these $\Delta_j$. Then, Lepskii's procedure can produce an aggregated estimator with rate $1\wedge C(\log(\log(N)/\delta)/N)^{\kappa_\Delta}$, only incurring iterated logarithmic adaptation cost. The full observation estimator can also be made adaptive by running Lepskii's method on modified grids.

\paragraph{Precise optimal sample complexities.}
In the setting of pivotal reduction, both our upper and lower bounds of sample complexity read $\Theta_\delta((1/\tau)^{2\vee\Theta(1/\sqrt{\Delta})})$. However, the $\Theta(1/\sqrt{\Delta})$ term on the exponent may contain different leading constants for upper and lower bounds, making them not of the same order. This is a natural drawback of the polynomial method, as also noted in the results of \cite{orlitsky2016optimal,wu2019chebyshev, hao2020optimal}. In addition, the upper and lower bounds for full observation differ by a $\abs{\cW}$-dependent term when the size of the alphabet can grow with $N$. To derive the most precise optimal sample complexities, one must carefully pin down the choice of polynomials in both the design of estimators and the construction of lower bound instances.

\bibliographystyle{alpha}
\bibliography{reference}

\appendix
\section{Remaining Proof of Upper Bounds}\label{apdx:proof_upper_bound}

\subsection{Proof of Lemma~\ref{lem:bias_variance}}\label{apdx:proof_bias_variance}
\begin{proof}[Proof of Lemma~\ref{lem:bias_variance}]
We first control the bias. Using Lemma~\ref{lem:laguerre_laplace}, we have
\begin{align*}
    \E[\ell_f(Y_t)]&=1+\sum_{m=1}^M a_m \E[\ell_m(Y_t)]\\
    &=1+\sum_{m=1}^M a_m \int_{(0,1)}\left[(1-\varepsilon)+\varepsilon \sum_{w\in\cW} r^{1/p_{t,w}}\right] L_m(-\log(r))\dif r\\
    &=1+\sum_{m=1}^Ma_m\int_{(0,+\infty)}\left[(1-\varepsilon)+\varepsilon\sum_{w:p_{t,w}>0}e^{-x/p_{t,w}}\right]L_m(x)\dif x\\
    &=1+\varepsilon\sum_{m=1}^Ma_m\sum_{w:p_{t,w}>0}\int_{(0,+\infty)}e^{-x/p_{t,w}}L_m(x)\dif x \\
    &=1+\varepsilon\sum_{m=1}^Ma_m\sum_{w:p_{t,w}>0}\frac{(1/p_{t,w}-1)^m}{(1/p_{t,w})^{m+1}}\\
    &=1+\varepsilon\sum_{m=1}^Ma_m\sum_{w:p_{t,w}>0} p_{t,w}(1-p_{t,w})^m\\
    &=1-\varepsilon+\varepsilon\sum_{w:p_{t,w}>0}p_{t,w}f(1-p_{t,w}).
\end{align*}
Since $1-p_{t,w}\in [\Delta,1)$ for those $p_{t,w}>0$, we obtain that
\begin{align*}
    \abs{\E[1-\ell_f(Y_t)]-\varepsilon}=\varepsilon\abs*{\sum_{w:p_{t,w}>0}p_{t,w}f(1-p_{t,w})}\leq \sup_{z\in[\Delta,1)}\abs{f(z)}.
\end{align*}

We move on to control the second moment. For $p\in(0,1]$, define the Gram matrix (operator)
\begin{align*}
    G_{j,k}(p)=\E_{Y\sim \mathrm{Beta}(1/p,1)}[\ell_j(Y)\ell_k(Y)].
\end{align*}
By Lemma~\ref{lem:laguerre_generating_function}, we have
\begin{align*}
    \sum_{j,k\geq 0}G_{j,k}(p)u^jv^k&=\int (1/p)e^{-y/p}\frac{e^{-uy(1-u)-vy(1-v)}}{(1-u)(1-v)}\dif y\\
    &=\frac{1}{1-(1-p)(u+v)+(1-2p)uv},\quad u,v\in(-1,1).
\end{align*}
Let $\zeta_p(z)=1-p+pz$. Define the kernel
\begin{align*}
    H_{j,k}(p)=\frac{1}{2\pi}\int_0^{2\pi}\zeta_p(e^{\im\theta})^j\overline{\zeta_p(e^{\im \theta})^k}\dif \theta.
\end{align*}
We compute its generating function
\begin{align*}
    \sum_{j,k\geq 0}H_{j,k}(p)u^jv^k&=\frac{1}{2\pi}\int_0^{2\pi}\frac{1}{1-u\zeta_p(e^{\im \theta})}\cdot \frac{1}{1-v\overline{\zeta_p(e^{\im \theta})}}\dif \theta\\
    &=\frac{1}{1-(1-p)(u+v)+(1-2p)uv},\quad \abs{u}<1,\ \abs{v}<1,
\end{align*}
where the last line follows from a direct evaluation of the contour integral. Therefore, we have $G_{j,k}(p)\equiv H_{j,k}(p)$, and thus for every $p\in(0,1]$,
\begin{align*}
    \E_{Y\sim\mathrm{Beta}(1/p,1)}[\ell_f^2(Y)]=\sum_{j,k=0}^M a_j \overline{a_k}G_{j,k}(p)&=\sum_{j,k=0}^M a_j \overline{a_k}H_{j,k}(p)\\
    &=\frac{1}{2\pi}\int_0^{2\pi}\sum_{j,k=0}^M a_j(1-p+pe^{\im\theta})^j\overline{a_k(1-p+pe^{\im\theta})^k}\dif \theta\\
    &=\frac{1}{2\pi}\int_0^{2\pi}\abs{f(1-p+pe^{\im\theta})}^2\dif \theta\\
    &\leq \sup_{z\in\overline{\bbD}}\,\abs{f(z)}^2.
\end{align*}
Consequently,
\begin{align*}
    \E[\ell_f^2(Y_t)]&=(1-\varepsilon)\E_{Y\sim\mathrm{Beta}(1,1)}[\ell_f^2(Y)]+\varepsilon \sum_{w\in\cW:p_{t,w}>0} p_{t,w} \E_{Y\sim\mathrm{Beta}(1/p_{t,w},1)}[\ell_f^2(Y)]\\
    &\leq(1-\varepsilon)\sup_{z\in\overline{\bbD}}\,\abs{f(z)}^2+\varepsilon \left(\sum_{w\in\cW:p_{t,w}>0}p_{t,w}\right)\cdot \sup_{z\in\overline{\bbD}}\,\abs{f(z)}^2\\
    &\leq \sup_{z\in\overline{\bbD}}\,\abs{f(z)}^2.
\end{align*}
\end{proof}
\subsection{Proof of Lemma~\ref{lem:polynomial_tradeoff}}\label{apdx:proof_polynomial_tradeoff}
The proof of Lemma~\ref{lem:polynomial_tradeoff} is divided into two parts. We first find a closed-form analytic function $g_{\lambda,a}$ in $\bbD$ that satisfies similar conditions. Then, we choose $f$ as the degree-$M$ truncated Taylor series of dilated $g_{\lambda,a}$ for appropriate values of $(M,a)$ depending on $\lambda$ and $\Delta$. To begin with, for $a\in(0,1)$ and $\lambda\geq 1$, we define
\begin{align*}
    \xi_a(z)=\frac{z-a}{1-az},\quad B_\lambda(z)=\cos\left((4\lambda/\pi)\arctanh(z)\right)
\end{align*}
for $z\in\bbD$. Here, the complex function $\arctanh(\cdot)$ is defined through
\begin{align*}
    \arctanh(z)=\sum_{k=0}^\infty\frac{z^k}{2k+1},\quad z\in \bbD.
\end{align*}
Since $B_\lambda(z)=B_\lambda(-z)$, we can expand the holomorphic function $B_\lambda$ without odd-order terms:
\begin{align*}
    B_\lambda(z)=\sum_{k=0}^\infty b_{k} z^{2k},\quad z\in\bbD.
\end{align*}
Also, since $B_\lambda(\overline{z})=\overline{B_\lambda(z)}$, all coefficients $\{b_k\}_{k=0}^\infty$ are real. We can then define another holomorphic function $D_\lambda$ on $\bbD$ by
\begin{align*}
    D_\lambda(z)=\sum_{k=0}^\infty b_{k}  z^k,\quad z\in\bbD.
\end{align*}
This function $D_\lambda$ is also well-defined on $\bbD$ since for each $z\in\bbD$, there exists $w\in\bbD$ such that $w^2=z$ and thus $D_\lambda(z)=B_\lambda(w)$. We then define $g_{\lambda,a}$ by
\begin{align*}
    g_{\lambda,a}(z)=\frac{D_\lambda(\xi_a(z))}{D_\lambda(-a)},\quad z\in\bbD.
\end{align*}
Since $\xi_a$ is a disk automorphism, $g_{\lambda,a}$ is well-defined as a single-valued complex function. Recall that $\{b_k\}_{k=0}^\infty$ are all real, and thus
\begin{align*}
    g_{\lambda,a}(\overline{z})=\frac{D_\lambda(\xi_a(\overline{z}))}{D_\lambda(-a)}=\frac{D_\lambda(\overline{\xi_a(z)})}{D_\lambda(-a)}=\frac{\overline{D_\lambda(\xi_a(z))}}{D_\lambda(-a)}=\overline{g_{\lambda,a}(z)}.
\end{align*}
Therefore, the Taylor coefficients of $g_{\lambda,a}$ are also all real. We claim that:
\begin{Lemma}\label{lem:helper_function}
    For $a\in(0,1)$, $\lambda\geq 1$, and $g_{\lambda,a}$ defined as above, we have
    \begin{align*}
        g_{\lambda,a}(0)=1,\quad \sup_{z\in[a,1)}\abs{g_{\lambda,a}(z)}\leq 2e^{-2\lambda \kappa_a},\quad \sup_{z\in\bbD}\,\abs{g_{\lambda,a}(z)}\leq 2e^{-\lambda(1-2\kappa_a)}, 
    \end{align*}
    where $\kappa_a=(2/\pi)\arctan(\sqrt{a})$ is defined in analogy to $\kappa_\Delta$.
\end{Lemma}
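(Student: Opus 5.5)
The plan is to reduce everything to the closed form $D_\lambda(w^2)=B_\lambda(w)=\cos\bigl((4\lambda/\pi)\arctanh(w)\bigr)$, where $\arctanh$ is the principal branch $\arctanh(w)=\tfrac12\log\frac{1+w}{1-w}$ on $\bbD$. Equivalently, $\arctanh(w)=\sum_k w^{2k+1}/(2k+1)$; I read the displayed series as this odd series. Since $B_\lambda$ is even, $D_\lambda(z)=B_\lambda(\pm\sqrt z)$ does not depend on the choice of square root. Each of the three claims then comes from one evaluation of $B_\lambda$ and one Möbius fact.

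\emph{Normalization.} Since $\xi_a(0)=-a$, we get $g_{\lambda,a}(0)=D_\lambda(-a)/D_\lambda(-a)=1$. This requires $D_\lambda(-a)\neq 0$, which follows from the computation below. Take $w=\im\sqrt a$, so that $w^2=-a$. Then $\arctanh(\im\sqrt a)=\im\arctan(\sqrt a)$, and since $\cos(\im x)=\cosh x$ we obtain
\begin{align*}
D_\lambda(-a)=\cosh\bigl((4\lambda/\pi)\arctan\sqrt a\bigr)=\cosh(2\lambda\kappa_a)\geq \tfrac12 e^{2\lambda\kappa_a}>0 .
\end{align*}
This single identity is where $\kappa_a$ enters, and it supplies the denominator bound for both remaining estimates.

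\emph{Slit bound.} For $z\in[a,1)$ we have $\xi_a(z)\in[0,1)$. Choose the real root $w=\sqrt{\xi_a(z)}\in[0,1)$. Then $\arctanh(w)$ is real and $|B_\lambda(w)|\leq 1$. Hence $|g_{\lambda,a}(z)|\leq 1/\cosh(2\lambda\kappa_a)\leq 2e^{-2\lambda\kappa_a}$.

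\emph{Disk bound.} Here the stated exponent should read $+\lambda(1-2\kappa_a)$; this is the version consistent with Lemma~\ref{lem:polynomial_tradeoff}, and it is the one I would prove. Since $\xi_a$ is a disk automorphism, it suffices to bound $|D_\lambda(\zeta)|$ for $\zeta\in\bbD$, that is, $|B_\lambda(w)|$ for $w\in\bbD$. The map $w\mapsto (1+w)/(1-w)$ sends $\bbD$ onto the right half-plane. Therefore $\operatorname{Im}\arctanh(w)=\tfrac12\arg\frac{1+w}{1-w}\in(-\pi/4,\pi/4)$, so $\arctanh$ maps $\bbD$ into the strip $\{|\operatorname{Im}|<\pi/4\}$. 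Using $|\cos(x+\im y)|\leq\cosh y$, this gives $|B_\lambda(w)|\leq\cosh\bigl((4\lambda/\pi)(\pi/4)\bigr)=\cosh\lambda\leq e^{\lambda}$. Dividing by $\cosh(2\lambda\kappa_a)\geq\tfrac12e^{2\lambda\kappa_a}$ yields $\sup_{\bbD}|g_{\lambda,a}|\leq 2e^{\lambda(1-2\kappa_a)}$.

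The main care point is bookkeeping rather than estimation. One must confirm that the composition with $\sqrt{\cdot}$ is branch-independent, which follows from the evenness of $B_\lambda$. One must also use the principal branch of $\arctanh$ consistently, so that the strip bound $|\operatorname{Im}\arctanh|<\pi/4$ and the identity $\arctanh(\im s)=\im\arctan s$ both hold. The actual estimates are one line each.
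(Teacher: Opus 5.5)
Your proof is correct and follows the paper's argument step for step. It uses $\xi_a(0)=-a$ together with $D_\lambda(-a)=B_\lambda(\im\sqrt a)=\cosh(2\lambda\kappa_a)$, the bound $\abs{\cos(\cdot)}\leq 1$ for real arguments on the slit, and the strip bound $\abs{\Im\arctanh}<\pi/4$ on $\bbD$, which gives $\cosh\lambda/\cosh(2\lambda\kappa_a)\leq 2e^{\lambda(1-2\kappa_a)}$. You are also right on three points of detail: the disk exponent in the statement should read $+\lambda(1-2\kappa_a)$, which is what the paper's proof actually derives and what Lemma~\ref{lem:polynomial_tradeoff} uses; the $\arctanh$ series should be the odd one; and your inequality $\abs{\cos(x+\im y)}\leq\cosh y$ is the correct form of the paper's loosely written equality.
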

\begin{proof}
First, it is straightforward that $\xi_a(0)=-a$ and thus $g_{\lambda,a}(0)=1$. When $z\in[a,1)$, $\xi_a(z)\in[0,1)$ is real and so is $\arctanh(\xi_a(z))$, and thus
\begin{align*}
    g_{\lambda,a}(z)=\frac{D_\lambda(\xi_a(z))}{D_\lambda(-a)}&=\frac{B_\lambda(\sqrt{\xi_a(z)})}{B_\lambda(\im \sqrt{a})}\\
    &=\frac{\cos((4\lambda/\pi)\arctanh(\sqrt{\xi_a(z)}))}{\cos((4\lambda/\pi)\arctanh(\im \sqrt{a}))}\\
    &\leq \frac{1}{\cos((4\lambda/\pi)\arctanh(\im \sqrt{a}))}\\
    &=\frac{1}{\cos(\im (4\lambda/\pi)\arctan(\sqrt{a}))}\\
    &=\frac{1}{\cosh(2\lambda \kappa_a)}\\
    &\leq 2e^{-2\lambda \kappa_a}.
\end{align*}
It remains to bound the disk supremum. When $z\in\bbD$, $\zeta=(1+z)/(1-z)$ has positive real part. Hence, we can write
\begin{align*}
    \arctanh(z)=\frac{1}{2}\Log\left(\frac{1+z}{1-z}\right),\quad z\in\bbD,
\end{align*}
where $\Log$ is the principal branch defined as $\Log(\zeta)=\log(\abs{\zeta})+\im \Arg(\zeta)$ with $\Arg(\zeta)\in(-\pi/2,\pi/2)$ for every $\zeta:\Re\zeta>0$. Therefore, for $z\in\bbD$, we have
\begin{align*}
    \abs{\Im \arctanh(z)}=\frac{1}{2}\Arg\left(\frac{1+z}{1-z}\right)\leq \frac{\pi}{4}.
\end{align*}
Consequently, 
\begin{align*}
    \sup_{z\in\bbD}\, \abs{B_\lambda(z)}=\sup_{z\in\bbD}\,\abs{\cos((4\lambda/\pi)\arctanh(z))}=\sup_{z\in\bbD}\, \abs{\cosh((4\lambda/\pi)\cdot \Im \arctanh(z))}\leq \cosh(\lambda).
\end{align*}
Again, by the fact that every $z\in\bbD$ can be written as $z=w^2$ for some $w\in\bbD$, we conclude that
\begin{align*}
    \sup_{z\in\bbD}\, \abs{D_\lambda(z)}\leq \sup_{z\in\bbD}\, \abs{B_\lambda(z)}\leq \cosh(\lambda),
\end{align*}
which eventually leads to
\begin{align*}
    \sup_{z\in\bbD}\,\abs{g_{\lambda,a}(z)}\leq \frac{\cosh(\lambda)}{\cosh(2\lambda\kappa_a)}\leq 2e^{\lambda(1-2\kappa_a)}.
\end{align*}
\end{proof}
We can now prove Lemma~\ref{lem:polynomial_tradeoff}.
\begin{proof}[Proof of Lemma~\ref{lem:polynomial_tradeoff}]
For $\Delta\in(0,1)$ and $\lambda\geq 1$, let 
\begin{align*}
    a=\Delta\left(1-\frac{1}{2\lambda}\right),\quad \rho=1-\frac{1}{2\lambda}.
\end{align*}
Then, $1/2\leq \rho<1$ and $0<\Delta/2\leq a=\rho \Delta<\Delta$. We define $f_{\lambda,\Delta}(z)=g_{\lambda,a}(\rho z)$ for $g_{\lambda,a}$ as in Lemma~\ref{lem:helper_function}. Since $g_{\lambda,a}$ is holomorphic in $\bbD$, $f_{\lambda,\Delta}$ is holomorphic in the dilated disk $\bbD(0,1/\rho)\define \{z:\, \abs{z}<1/\rho\}$. Therefore, we can write
\begin{align*}
    f_{\lambda,\Delta}(z)=\sum_{m=0}^\infty \frac{f_{\lambda,\Delta}^{(m)}(0)}{m!}z^m,\quad z\in\bbD(0,1/\rho).
\end{align*}
Meanwhile, we use $z\in\bbD(0,1/\rho)\Leftrightarrow \rho z\in\bbD$ to see that
\begin{align*}
    \sup_{z\in\bbD(0,1/\rho)}\, \abs{f_{\lambda,\Delta}(z)}=\sup_{z\in\bbD}\, \abs{g_{\lambda,a}(z)}\leq 2e^{\lambda(1-2\kappa_a)},
\end{align*}
and use $z\in [\Delta,1)\Leftrightarrow\rho z\in [a,1)$ to conclude that
\begin{align*}
    \sup_{z\in [\Delta,1)}\abs{f_{\lambda,\Delta}(z)}\leq \sup_{z\in [a,1)}\abs{g_{\lambda,a}(z)}\leq 2e^{-2\lambda \kappa_a}.
\end{align*}
By Cauchy's estimate, we can further claim that
\begin{align}\label{eq:cauchy_estimate}
    \abs{f_{\lambda,\Delta}^{(m)}(0)}\leq \frac{m!\sup_{z\in\bbD(1/\rho)}\abs{f_{\lambda,\Delta}(z)}}{(1/\rho)^m}\leq m!\rho^m\cdot 2e^{\lambda(1-2\kappa_a)}.
\end{align}
For $M\in\bbN$, let $f_{\lambda,\Delta,M}$ be the order-$M$ Taylor truncation of $f_{\lambda,\Delta}$, that is,
\begin{align*}
    f_{\lambda,\Delta,M}(z)=\sum_{m=0}^M \frac{f_{\lambda,\Delta}^{(m)}(0)}{m!}z^m,\quad z\in\bbC.
\end{align*}
Recall that $g_{\lambda,a}$ has real Taylor coefficients, and so does $f_{\lambda,\Delta}$. Therefore, $f_{\lambda,\Delta,M}$ is indeed a finite-degree polynomial with real coefficients. We further check that
\begin{align*}
    f_{\lambda,\Delta,M}(0)=f_{\lambda,\Delta}(0)=g_{\lambda,a}(0)=1,
\end{align*}
meaning that the constant term is indeed $1$. Also, using the previous Cauchy's estimate \eqref{eq:cauchy_estimate}, we have for every $z\in\overline{\bbD}$ that
\begin{align*}
    \abs{f_{\lambda,\Delta,M}(z)-f_{\lambda,\Delta}(z)}\leq \sum_{m\geq M+1}\frac{\abs{f_{\lambda,\Delta}^{(m)}(0)}}{m!}\leq 2e^{\lambda(1-2\kappa_a)}\sum_{m\geq M+1}\rho^m\leq 2e^{\lambda(1-2\kappa_a)}\frac{\rho^{M+1}}{1-\rho}.
\end{align*}
Choose 
\begin{align}
    M=\ceil*{\frac{\lambda+\log(8\lambda)}{-\log(1-1/(2\lambda))}},
\end{align}
which yields
\begin{align*}
    \frac{\rho^{M+1}}{1-\rho}=\frac{(1-1/(2\lambda))^{M+1}}{1-(1-1/(2\lambda))}\leq \frac{e^{-\lambda-\log(8\lambda)}}{1/(2\lambda)}=\frac{1}{4}e^{-\lambda}.
\end{align*}
Therefore, with this choice of $M$, we can achieve
\begin{align*}
    \sup_{z\in\bbD}\, \abs{f_{\lambda,\Delta,M}(z)-f_{\lambda,\Delta}(z)}\leq \frac{1}{4}e^{-\lambda}.
\end{align*}
Therefore, by the triangle inequality and that $\kappa_a\in(0,1/2)$,
\begin{align*}
    \sup_{z\in\overline{\bbD}}\, \abs{f_{\lambda,\Delta,M}(z)}\leq 2e^{\lambda(1-2\kappa_a)}+\frac{1}{4}e^{-\lambda}\leq \frac{9}{4}e^{\lambda(1-2\kappa_a)},
\end{align*}
and
\begin{align*}
     \sup_{z\in [\Delta,1)}\abs{f_{\lambda,\Delta,M}(z)}\leq 2e^{-2\lambda \kappa_a}+\frac{1}{4}e^{-\lambda}\leq \frac{9}{4}e^{-2\lambda\kappa_a}.
\end{align*}
To translate from $e^{-2\lambda\kappa_a}$ to $e^{-\lambda\kappa_\Delta}$, we notice that
\begin{align*}
    \frac{e^{-2\lambda\kappa_a}}{e^{-2\lambda\kappa_\Delta}}=e^{2\lambda (\kappa_\Delta-\kappa_a)}&= \exp\left((4\lambda/\pi)(\arctan(\sqrt{\Delta})-\arctan(\sqrt{a}))\right)\\
    &\leq \exp\left((4\lambda/\pi)\cdot (\Delta-a)\cdot \sup_{x\in[a,\Delta]}\frac{1}{2\sqrt{x}(1+x)}\right)\\
    &= \exp\left((2\Delta/\pi)\cdot \frac{1}{\sqrt{a}(1+a)}\right)\\
    &\leq \exp\left((2\Delta/\pi)\cdot \frac{1}{\sqrt{h/2}}\right)\\
    &\leq e^{2\sqrt{2}/\pi}\leq 3.
\end{align*}
The desired conclusions then follow.
\end{proof}

\subsection{Proof of Theorem~\ref{thm:upper_bound_pivotal}}\label{apdx:proof_upper_bound_pivotal}
To prove Theorem~\ref{thm:upper_bound_pivotal}, we first state the following performance guarantee for the median-of-means estimator.
\begin{Lemma}[Median-of-means]\label{lem:median_of_means}
    Let $\delta\in(0,1)$ and $N\geq 24\log(e/\delta)$. Suppose $X_{1:N}$ are independent random variables such that
    \begin{align*}
        \abs{\E[X_t]-\theta}\leq B,\quad \E[X_t^2]\leq V^2,\quad t\in[N],
    \end{align*}
    for some $\theta\in\bbR$, $B\geq 0$, and $V^2\geq 0$. Let $K=\ceil{8\log(e/\delta)}$ and $q=\floor{N/K}$, and divide the index set $[Kq]$ into $K$ equal-sized blocks $\{S_k=\{(k-1)q+1,(k-1)q+2,\cdots,kq\}\}_{k=1}^K$. Define 
    \begin{align*}
        Z_k=\frac{1}{q}\sum_{t\in B_k} X_t,\quad \widehat{\theta}=\mathrm{median}\{Z_1,Z_2,\cdots,Z_k\}.
    \end{align*}
    Then,
    \begin{align*}
        P_{X_{1:N}}\left(\abs{\widehat{\theta}-\theta}>B+6\sqrt{2}V\sqrt{\frac{\log(e/\delta)}{N}}\right)\leq \delta.
    \end{align*}
\end{Lemma}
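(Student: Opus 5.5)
We will prove Lemma~\ref{lem:median_of_means} by the standard median-of-means argument: first show that each block mean is close to $\theta$ with constant probability, then boost this with a binomial tail bound.

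\textbf{Step 1: a single block.} Fix a block $S_k$ of size $q$. The variables are independent, so
\begin{align*}
    \mathrm{Var}(Z_k)=\frac{1}{q^2}\sum_{t\in S_k}\mathrm{Var}(X_t)\leq \frac{V^2}{q}.
\end{align*}
Moreover $\abs{\E[Z_k]-\theta}\leq B$, since $\E[Z_k]$ is an average of the $\E[X_t]$. Chebyshev's inequality then gives
\begin{align*}
    P\bigl(\abs{Z_k-\theta}>B+r\bigr)\leq P\bigl(\abs{Z_k-\E Z_k}>r\bigr)\leq \frac{V^2}{qr^2}.
\end{align*}
We take $r=2V/\sqrt{q}$, so this probability is at most $1/4$.

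\textbf{Step 2: from blocks to the median.} If $\abs{\widehat{\theta}-\theta}>B+r$, then at least $K/2$ of the block means satisfy $\abs{Z_k-\theta}>B+r$. The blocks are disjoint, so the indicators $\indi\{\abs{Z_k-\theta}>B+r\}$ are independent, each with mean at most $1/4$. Hoeffding's inequality bounds the probability that their sum is at least $K/2$ by $\exp(-2K(1/4)^2)=e^{-K/8}$. Since $K\geq 8\log(e/\delta)$, this is at most $\delta/e\leq\delta$.

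\textbf{Step 3: converting $q$ to $N$.} We need $r=2V/\sqrt{q}\leq 6\sqrt{2}\,V\sqrt{\log(e/\delta)/N}$, that is, $q\geq N/(18\log(e/\delta))$. We have $K=\ceil{8\log(e/\delta)}\leq 9\log(e/\delta)$, because $\log(e/\delta)\geq1$. The assumption $N\geq 24\log(e/\delta)$ gives $N/K\geq 24/9>2$, so $q=\floor{N/K}\geq N/(2K)\geq N/(18\log(e/\delta))$. Hence the claimed radius dominates $B+r$, which proves the bound.

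\textbf{Main obstacle.} The only delicate step is this constant bookkeeping: the floor in $q$ and the ceiling in $K$ must be absorbed into the factor $6\sqrt{2}$. We also read $B_k$ in the definition of $Z_k$ as $S_k$, and the median as taken over all $K$ blocks. Discarding the $N-Kq$ leftover samples is harmless.
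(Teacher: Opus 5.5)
Your proposal is correct and follows the paper's proof step for step: Chebyshev on each block at radius $2V/\sqrt{q}$ gives failure probability at most $1/4$, Hoeffding on the independent block indicators gives $e^{-K/8}\leq\delta$, and the same bookkeeping ($q\geq N/(2K)$ with $K\leq 9\log(e/\delta)$) yields the constant $6\sqrt{2}$. Your Hoeffding exponent $2K(1/4)^2$ is a cleaner form of the paper's garbled intermediate expression, and you correctly read $B_k$ as $S_k$ and the median as taken over all $K$ blocks.
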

\begin{proof}
    By Chebyshev's inequality,
    \begin{align*}
        P_{X_{1:N}}\left(\abs{Z_k-\E[Z_k]}>2V/\sqrt{q}\right)\leq \frac{1}{4}.
    \end{align*}
    Also,
    \begin{align*}
        \abs{\E[Z_k]-\theta}\leq \frac{1}{q}\sum_{t\in B_k}\abs{E[X_t]-\theta}\leq B.
    \end{align*}
    Therefore, by the triangle inequality, we have
    \begin{align*}
         P_{X_{1:N}}\left(\abs{Z_k-\theta}>B+2V/\sqrt{q}\right)\leq \frac{1}{4}.
    \end{align*}
    Let $I_k=\indi\{\abs{Z_k-\theta}>B+2V/\sqrt{q}\}$ be the indicator of this bad event. Then, $I_{1:K}$ are independent and $\E[I_k]\leq 1/4$ for all $k\in [K]$. If $\abs{\widehat{\theta}-\theta}>B+2V/\sqrt{q}$, then $I_k=1$ for at least $\ceil{K/2}$ blocks. Therefore,
    \begin{align*}
        P_{X_{1:N}}\left(\abs{\widehat{\theta}-\theta}>B+2V/\sqrt{q}\right)\leq P_{X_{1:N}}\left(\sum_{k=1}^K I_k\geq K/2\right)&\leq \exp\left(-\frac{2(K/2-1/4)^2}{K}\right)\\
        &\leq \exp\left(-K/8\right)\\
        &\leq \delta,
    \end{align*}
    where we have used Hoeffding's inequality and $K=\ceil{8\log(e/\delta)}>1$. Finally, we notice that
    \begin{align*}
        B+2V/\sqrt{q}=B+\frac{2V}{\sqrt{\floor{N/K}}}\leq B+2V\sqrt{\frac{2\ceil{8\log(e/\delta)}}{N}}\leq B+6\sqrt{2}V\sqrt{\frac{\log(e/\delta)}{N}},
    \end{align*}
    which recovers the desired bound.
\end{proof}
\begin{proof}[Proof of Theorem~\ref{thm:upper_bound_pivotal}] We set $X_t=\ell_f(Y_t)$, $B=\sup_{z\in[\Delta,1)}\abs{f(z)}$, and $V^2=\sup_{z\in\overline{\bbD}}\abs{f(z)}^2$ in Lemma~\ref{lem:median_of_means}. Then, according to Lemma~\ref{lem:bias_variance}, the resulting median-of-means estimator
\begin{align*}
    \widehat{\theta}=\mathrm{median}\left\{\frac{1}{q}\sum_{t\in B_k}\ell_f(Y_t):\ k\in [K]\right\}
\end{align*}
can estimate $\theta=1-\varepsilon$ up to an error of
\begin{align*}
    \sup_{z\in[\Delta,1)}\abs{f(z)}+6\sqrt{2}\sup_{z\in\overline{\bbD}}\abs{f(z)}\sqrt{\frac{\log(e/\delta)}{N}},
\end{align*}
with probability at least $1-\delta$. Let
\begin{align*}
    \widehat{\varepsilon}\define 0\vee(1-\widehat{\theta})\wedge 1=0\vee\left(1-\mathrm{median}\left\{\frac{1}{q}\sum_{t\in B_k}\ell_f(Y_t):\ k\in [K]\right\}\right)\wedge 1.
\end{align*}
Then, we have whenever $N\geq 24\log(e/\delta)$ that
\begin{align*}
    P_{Y_{1:N}}\left(\abs{\widehat{\varepsilon}-\varepsilon}>1\wedge \left(\sup_{z\in[\Delta,1)}\abs{f(z)}+6\sqrt{2}\sup_{z\in\overline{\bbD}}\abs{f(z)}\sqrt{\frac{\log(e/\delta)}{N}}\right)\right)\leq \delta.
\end{align*}
According to Lemma~\ref{lem:polynomial_tradeoff}, for every $\lambda\geq 1$, we can choose a real polynomial $f$ to make the error term satisfy
\begin{align*}
    \sup_{z\in[\Delta,1)}\abs{f(z)}+6\sqrt{2}\sup_{z\in\overline{\bbD}}\abs{f(z)}\sqrt{\frac{\log(e/\delta)}{N}}\leq 8e^{-2\lambda\kappa_\Delta}+48\sqrt{2}e^{\lambda(1-2\kappa_\Delta)}\sqrt{\frac{\log(e/\delta)}{N}}.
\end{align*}
Taking $\lambda=(1/2)\log(N/\log(e/\delta))\geq \log(24)/2>1$, we have $e^{\lambda}=\sqrt{N/\log(e/\delta)}$, and the upper bound becomes
\begin{align*}
    1\wedge (8+48\sqrt{2})\left(\frac{\log(e/\delta)}{N}\right)^{\kappa_\Delta}.
\end{align*}
\end{proof}

\subsection{Proof of Lemma~\ref{lem:different_bern}}\label{apdx:proof_different_bern}
\begin{proof}[Proof of Lemma~\ref{lem:different_bern}]
When $(U,w)$ is sampled under the Gumbel--max mechanism with NTP distribution $P\in\cP_\Delta^{\cW}$, we have
\begin{align*}
    U_{w}^{1/p_w}\geq U_v^{1/p_v},\quad \forall \, v\in\cW\backslash\{v\}.
\end{align*}
That is, for every $v\in\cW\backslash\{w\}$, \(  U_w^{p_v}\geq U_v^{p_w}\), and thus
\begin{align*}
    U_w^{1-p_w}=\prod_{v\neq w}U_w^{p_v}\geq \prod_{v\neq w}U_v^{p_w}.
\end{align*}
Therefore, we use $p_w\leq 1-\Delta$ to claim that
\begin{align*}
    U_w\geq \prod_{v\neq w} U_v^{p_w/(1-p_w)}\geq \prod_{v\neq w} U_v^{(1-\Delta)/\Delta}=\prod_{v\neq w} U_v^{1/\beta_\Delta},
\end{align*}
which means $B(U,w)=0$ almost surely in the purely watermarked setting. To show the conclusion for purely non-watermarked tokens, we use the independence between $w$ and $U$ to claim that
\begin{align*}
    P(B(U,w)=1)&=\E_{U_{1:\abs{\cW}}\iid \mathrm{Unif}(0,1)}\left(\indi\left\{U_1<\prod_{2\leq j\leq \abs{\cW}} U_j^{1/\beta_\Delta}\right\}\right)\\
    &=\E_{U_{2:\abs{\cW}}\iid \mathrm{Unif}(0,1)}\left[\E_{U_1\sim  \mathrm{Unif}(0,1)}\left(\indi\left\{U_1<\prod_{2\leq j\leq \abs{\cW}} U_j^{1/\beta_\Delta}\right\}\mid U_{2:\abs{\cW}}\right)\right]\\
    &=\E_{U_{2:\abs{\cW}}\iid \mathrm{Unif}(0,1)}\left[\prod_{2\leq j\leq \abs{\cW}} U_j^{1/\beta_\Delta}\right]\\
    &=\prod_{2\leq j\leq \abs{\cW}} \left(\int_{(0,1)} u^{1/\beta_\Delta}\dif u\right)\\
    &=\Delta^{\abs{\cW}-1}.
\end{align*}
The desired conclusion then follows.
\end{proof}

\subsection{Proof of Claim~\ref{clm:tradeoff_optimality}}\label{apdx:proof_tradeoff_optimality}
\begin{proof}[Proof of Claim~\ref{clm:tradeoff_optimality}]
For any polynomial $f$ such that $f(0)=1$ and $\sup_{z\in\overline{\bbD}}\,\abs{f(z)}\leq A$, let 
    \begin{align*}
        a=\sup_{z\in[\Delta,1)}\abs{f(z)}.
    \end{align*}
    We claim $a\in(0,A]$. The reason for $a>0$ is as follows: if this is not true, we would have $f(z)\equiv 0$ on $[\Delta,1]$, which implies $f(z)\equiv 0$ in $\bbD$ since $f\in H(\bbD)$. This contradicts our assumption $f(0)=1\neq 0$. 

    Define the conformal mapping:
    \begin{align*}
        w\mapsto z=\Psi(w)\define \frac{(\frac{e^{\im \pi w/2}-1}{e^{\im\pi w/2}+1})^2+\Delta}{1+\Delta(\frac{e^{\im \pi w/2}-1}{e^{\im\pi w/2}+1})^2}.
    \end{align*}
    Then, we have
    \begin{align*}
        w\in\{\zeta:0<\Re \zeta<1\}\Leftrightarrow z\in \bbD\backslash[\Delta,1),
    \end{align*}
    and also
    \begin{align*}
        w\in\{\zeta:\Re \zeta=0\}\Leftrightarrow z\in[\Delta,1),\quad w\in\{\zeta:\Re \zeta =1\}\Leftrightarrow z\in \partial\bbD\backslash\{0\},\quad w=\infty \Leftrightarrow z=1,
    \end{align*}
    and finally,
    \begin{align*}
        w=\frac{4}{\pi}\arctan(\sqrt{\Delta})\define 2\kappa_\Delta\Leftrightarrow z=0.
    \end{align*}
    Since $f\in H(\bbD)\cap C(\overline{\bbD})$, the composite function $g(w)=f\circ \Psi(w)$ is a bounded holomorphic function on the strip $\{\zeta:0<\Re \zeta<1\}$, and is continuous on its closure. Define
    \begin{align*}
        G(x)=\sup_{y\in\bbR }\, \abs{g(x+\im y)},\quad 0\leq x\leq 1.
    \end{align*}
    By assumption, we have
    \begin{align*}
        G(0)\leq a,\quad G(1)\leq A,\quad G(2\kappa_\Delta)\geq 1.
    \end{align*}
    According to Hadamard three-lines theorem (Lemma \ref{lem:hadamard}), we have
    \begin{align*}
        G(2\kappa_\Delta)\leq G(0)^{1-2\kappa_\Delta}G(1)^{2\kappa_\Delta},
    \end{align*}
    which implies
    \begin{align*}
        1\leq a^{1-2\kappa_\Delta}A^{2\kappa_\Delta}.
    \end{align*}
    Rearranging the terms yields the desired lower bound.
    
\end{proof}

\section{Remaining Proof of Lower Bounds}\label{apdx:proof_lower_bound}
\subsection{Proof of Proposition~\ref{prop:matching_gumbel_pivotal}}\label{apdx:proof_matching_gumbel_pivotal}
We first state the following moment-matching result on a bounded interval.
\begin{Lemma}\label{lem:matching_plain}
For any $\Delta\in(0,1/2)$, any positive integer $M$, and $\rho_\Delta=\arccosh(\frac{1+2h}{1-2h})$, there exists a signed measure $\sigma$ on $[\Delta,1/2]$ supported on at most $2M+4$ points that satisfies
\begin{enumerate}
    \item normalization:
    \begin{align*}
        \sigma([\Delta,1/2])=1,\quad \norm{\sigma}_\mathrm{TV}\leq \cosh(M\rho_\Delta);
    \end{align*}
    \item zero moments:
    \begin{align*}
        \int_{[\Delta,1/2]} r^m \dif \sigma(r)=0,\quad 1\leq m\leq M.
    \end{align*}
\end{enumerate}
\end{Lemma}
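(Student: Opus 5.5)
The plan is to read the constraints as a point-evaluation functional on polynomials and to represent that functional by a quadrature rule at Chebyshev extremal nodes. The two conditions say exactly that $\int q\,\dif\sigma=q(0)$ for every polynomial $q$ of degree at most $M$. Indeed, $\int 1\,\dif\sigma=1=q(0)$ for $q\equiv 1$, and $\int r^m\dif\sigma=0=0^m$ for $1\le m\le M$. So $\sigma$ should represent evaluation at the point $0$, which lies outside $[\Delta,1/2]$. The smallest total variation of such a representation is the norm of this functional, namely $\sup\{|q(0)|:\ \sup_{[\Delta,1/2]}|q|\le 1\}$. By the Chebyshev extremal property this equals $|T_M(x_0)|$, where $x_0$ is the image of $0$ under the affine map sending $[\Delta,1/2]$ onto $[-1,1]$.

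\textbf{Step 1 (affine change of variables).} Set $x=\bigl(2r-(\Delta+\tfrac12)\bigr)/(\tfrac12-\Delta)$. Then $r=0$ corresponds to
\begin{align*}
x_0=-\frac{1+2\Delta}{1-2\Delta}<-1,\qquad |x_0|=\cosh\rho_\Delta,
\end{align*}
so $|T_M(x_0)|=\cosh(M\rho_\Delta)$ by the definition of $\rho_\Delta$. Polynomials of degree at most $M$ in $r$ are the same as polynomials of degree at most $M$ in $x$. It therefore suffices to build a signed measure $\mu$ on $[-1,1]$ with $\int q\,\dif\mu=q(x_0)$ for all $q$ with $\deg q\le M$ and $\norm{\mu}_{\mathrm{TV}}\le\cosh(M\rho_\Delta)$, and then push $\mu$ forward to $[\Delta,1/2]$.

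\textbf{Step 2 (explicit quadrature).} Take the $M+1$ Chebyshev extremal nodes $y_j=\cos(j\pi/M)$, $j=0,\dots,M$, and let $\ell_j$ be the corresponding Lagrange basis polynomials. Lagrange interpolation is exact in degree at most $M$, so
\begin{align*}
q(x_0)=\sum_{j=0}^M \ell_j(x_0)\,q(y_j)\qquad\text{for all }q\text{ with }\deg q\le M.
\end{align*}
Hence $\mu=\sum_j \ell_j(x_0)\,\delta_{y_j}$ satisfies both the normalization and the zero-moment conditions. Its support has $M+1\le 2M+4$ points.

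\textbf{Step 3 (the total-variation bound; main obstacle).} We must show $\sum_j|\ell_j(x_0)|=|T_M(x_0)|$. Write $\ell_j(x)=\omega(x)/\bigl((x-y_j)\,\omega'(y_j)\bigr)$ with $\omega(x)=\prod_k(x-y_k)$.
\begin{itemize}
\item Since $x_0<-1\le y_j$ for every $j$, the factor $x_0-y_j$ is negative for all $j$, and $\omega(x_0)$ does not depend on $j$.
\item Because the nodes are distinct and ordered, the signs of $\omega'(y_j)$ alternate in $j$.
\item Consequently $\operatorname{sign}\ell_j(x_0)=s\,(-1)^j$ for a fixed sign $s\in\{\pm1\}$.
\item Since $T_M(y_j)=(-1)^j$, this gives $\sum_j|\ell_j(x_0)|=s\sum_j\ell_j(x_0)T_M(y_j)$.
\item Applying exact interpolation to $q=T_M$, the right-hand side equals $s\,T_M(x_0)=|T_M(x_0)|=\cosh(M\rho_\Delta)$.
\end{itemize}
The point needing care is the sign bookkeeping: $x_0$ must lie strictly to one side of all the nodes so that every factor $x_0-y_j$ has the same sign.

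\textbf{Step 4 (conclusion).} Pushing $\mu$ forward under the inverse affine map gives the desired $\sigma$ on $[\Delta,1/2]$. Its total variation equals $\cosh(M\rho_\Delta)$, which is the required bound.
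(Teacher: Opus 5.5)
Your proof is correct, and it takes a genuinely different, constructive route from the paper. The paper argues by duality. It computes the norm of the evaluation functional $f\mapsto f(0)$ on polynomials of degree at most $M$, with the sup-norm on $[\Delta,1/2]$, by citing the Chebyshev extremal property. It then extends this functional to $C([\Delta,1/2])$ by Hahn--Banach without increasing the norm, and represents the extension by a signed measure $\tilde\sigma$ via Riesz. Finally it applies Carath\'eodory to $\tilde\sigma^{+}$ and $\tilde\sigma^{-}$ separately, obtaining at most $2(M+2)$ atoms while preserving moments $0,\dots,M$ and hence the total variation.

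You instead write the norming measure down explicitly. You use Lagrange interpolation at the $M+1$ alternation points $\cos(j\pi/M)$ of $T_M$. The sign pattern $\mathrm{sign}\,\ell_j(x_0)=(-1)^{M+j}$ holds because $x_0<-1$ strictly for $\Delta\in(0,1/2)$, and it turns $\sum_j|\ell_j(x_0)|$ into $(-1)^M T_M(x_0)=\cosh(M\rho_\Delta)$. Your bookkeeping also handles the sign that the paper's expression $T_M\bigl(-\frac{1/2+\Delta}{1/2-\Delta}\bigr)$ leaves implicit, since that quantity equals $(-1)^M\cosh(M\rho_\Delta)$.

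What your approach buys:
\begin{itemize}
\item It is fully elementary, with no functional analysis.
\item The support is smaller: $M+1$ points rather than $2M+4$.
\item The total variation equals $\cosh(M\rho_\Delta)$ exactly.
\item It proves the relevant half of the Chebyshev extremal inequality rather than citing it. Your opening appeal to that property is only motivation and could be dropped.
\end{itemize}

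The paper's duality template is more generic. It needs only the value of the functional's norm, not an explicit extremal polynomial with a known alternation set, so it would transfer to other function classes or evaluation points where no closed-form quadrature is available. For this particular lemma, though, your construction is the cleaner argument.
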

\begin{proof}
Let $\cF_M$ be the space of real polynomials with degree at most $M$ equipped with the sup-norm
\begin{align*}
    \norm{f}_\infty=\sup_{z\in [\Delta,1/2]}\abs{f(r)},\quad f\in\cF_M.
\end{align*}
Consider the linear functional $\cL_M$ defined by:
\begin{align*}
    \cL_M:\ f\mapsto f(0),\quad f\in\cF_M.
\end{align*}
By the Chebyshev extremal property, we know that
\begin{align*}
    \norm{\cL_M}\define \sup_{f\in\cF_M,\, f\nequiv0}\frac{\abs{\cL_M(f)}}{\norm{f}_\infty}=\sup_{f\in\cF_M,\, f\nequiv0}\frac{\abs{f(0)}}{\sup_{z\in[\Delta,1/2]}\abs{f(z)}}=T_M\left(-\frac{1/2+\Delta}{1/2-\Delta}\right)=\cosh(M\rho_\Delta),
\end{align*}
where $T_M$ is the degree-$M$ Chebyshev inequality of the first kind. Since $\cF_M$ is a subspace of $C([\Delta,1/2])$, according to the Hahn--Banach theorem, $\cL_M$ has an extension $\cL:\, C([\Delta,1/2])\to \bbR$ with the same norm. That is, there exists a linear functional $\cL$ on $C([\Delta,1/2])$ satisfying
\begin{align*}
    \cL(f)=\cL_M(f),\quad f\in\cF_M,
\end{align*}
and
\begin{align*}
    \norm{\cL}\define \sup_{f\in C([\Delta,1/2]),\, f\nequiv0}\frac{\abs{\cL(f)}}{\norm{f}_\infty}=\cosh(M\rho_\Delta).
\end{align*}
Then, by the Riesz representation theorem, there exists a finite signed measure $\tilde{\sigma}$ on $[\Delta,1/2]$ such that
\begin{align*}
    \cL(f)=\int_{[\Delta,1/2]}f(z)\dif \tilde{\sigma}(z),\quad f\in C([\Delta,1/2]),
\end{align*}
and thus $\norm{\tilde{\sigma}}_\mathrm{TV}=\norm{\cL}=\cosh(M\rho_\Delta)$. Meanwhile, applying the above identity with $f(r)=r^m$ for $m=0,1,2,\cdots,M$, we obtain
\begin{align*}
    \int_{[\Delta,1/2]} z^m \dif\tilde{\sigma}(z)=\delta_{m,0}.
\end{align*}
Finally, we consider the Jordan decomposition of $\tilde{\sigma}$:
\begin{align*}
    \tilde{\sigma}=\tilde{\sigma}^+-\tilde{\sigma}^-,
\end{align*}
where $\tilde{\sigma}^\pm$ are nonnegative measures. By Carath\'{e}odory's convex hull theorem, there exist two nonnegative measures $\sigma^\pm$ on $[\Delta,1/2]$, each supported on at most $M+2$ points, such that
\begin{align*}
    \int_{[\Delta,1/2]} z^m\dif \tilde{\sigma}^\pm(z)=\int_{[\Delta,1/2]} z^m\dif \sigma^\pm(z),\quad m=0,1,2\cdots,M.
\end{align*}
Therefore, as implied by the identities for $m=1,2,\cdots,M$, the signed measure $\sigma=\sigma^+-\sigma^-$ satisfies zero moments of order $1$ to $M$. Finally, the $m=0$ identity implies that
\begin{align*}
    \norm{\sigma}_\mathrm{TV}=\int_{[\Delta,1/2]}\dif \sigma^+(z)+\int_{[\Delta,1/2]}\dif \sigma^-(z)=\int_{[\Delta,1/2]}\dif \tilde{\sigma}^+(z)+\int_{[\Delta,1/2]}\dif \tilde{\sigma}^-(z)=\norm{\tilde{\sigma}}_\mathrm{TV}.
\end{align*}
The desired conclusions then follow.
\end{proof}
Then, we show that any $\sigma$ that satisfies the conditions in Lemma~\ref{lem:matching_plain} must induce a Beta mixture density that is almost uniform.
\begin{Lemma}\label{lem:almost_uniform}
    Let $\sigma$ be the signed measure as in Lemma~\ref{lem:matching_plain}. We have
    \begin{align*}
        \norm{\phi_\sigma-1}_{L^2[0,1]}\leq \frac{1}{\sqrt{3}} 2^{-M} e^{M\rho_\Delta},
    \end{align*}
    where $\phi_\sigma(r)=\int_{[\Delta,1/2]} \phi_z(r)\dif \sigma(z)$ and
    \begin{align*}
        \phi_z(r)\define \frac{1}{1-z}r^{z/(1-z)}
    \end{align*}
    is the density of $\mathrm{Beta}(1/(1-z),1)$.
\end{Lemma}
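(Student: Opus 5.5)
The plan is to expand everything in the orthonormal Laguerre system $\{\ell_m\}_{m\ge 0}$ from \eqref{eq:ell_orthogonal} and read off the $L^2$ norm by Parseval. The coefficients of $\phi_\sigma$ in this basis turn out to be exactly the moments of $\sigma$. Lemma~\ref{lem:matching_plain} then kills the low-order coefficients, and the support $[\Delta,1/2]$ makes the high-order ones decay geometrically.

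\emph{Step 1: Laguerre coefficients of a single Beta density.} Fix $z\in[\Delta,1/2]$ and put $p=1-z\in[1/2,1-\Delta]$, so that $\phi_z$ is the $\mathrm{Beta}(1/p,1)$ density. The computation in the proof of Lemma~\ref{lem:bias_variance} (via Lemma~\ref{lem:laguerre_laplace}) gives $\int_{(0,+\infty)} e^{-x/p}L_m(x)\dif x = p(1-p)^m$. Under $x=-\log r$, the law of $-\log Y$ for $Y\sim\phi_z$ has density $(1/p)e^{-x/p}$. Hence
\begin{align*}
    \int_{(0,1)}\phi_z(r)\ell_m(r)\dif r=\E_{Y\sim\mathrm{Beta}(1/p,1)}[\ell_m(Y)]=(1-p)^m=z^m .
\end{align*}
Since $z\le 1/2$, we have $\phi_z(r)\le 2$, so $\phi_z\in L^2[0,1]$. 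Completeness of the Laguerre polynomials in $L^2((0,\infty),e^{-x}\dif x)$ transfers to $\{\ell_m\}$ in $L^2[0,1]$. Therefore $\phi_z=\sum_{m\ge0} z^m\ell_m$ in $L^2[0,1]$, and indeed $\sum_m z^{2m}=1/(1-z^2)=\norm{\phi_z}^2$ is consistent.

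\emph{Step 2: pass to the mixture.} Because $\sigma$ is supported on finitely many points (at most $2M+4$), $\phi_\sigma$ is a finite linear combination of the $\phi_z$. Hence it lies in $L^2$, and linearity of the inner product gives
\begin{align*}
    \langle \phi_\sigma,\ell_m\rangle=\int_{[\Delta,1/2]} z^m\dif\sigma(z)=:\mu_m .
\end{align*}
Here $\mu_0=\sigma([\Delta,1/2])=1$ and $\mu_m=0$ for $1\le m\le M$ by Lemma~\ref{lem:matching_plain}. Since $\ell_0\equiv 1$, the function $\phi_\sigma-1$ has coefficients $0$ for $m\le M$ and $\mu_m$ for $m>M$.

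\emph{Step 3: Parseval and tail bound.} For $m>M$, $\abs{\mu_m}\le 2^{-m}\norm{\sigma}_{\mathrm{TV}}\le 2^{-m}\cosh(M\rho_\Delta)\le 2^{-m}e^{M\rho_\Delta}$. Parseval then gives
\begin{align*}
    \norm{\phi_\sigma-1}_{L^2[0,1]}^2=\sum_{m>M}\mu_m^2\le e^{2M\rho_\Delta}\sum_{m\ge M+1}4^{-m}=\frac{1}{3}\,4^{-M}e^{2M\rho_\Delta}.
\end{align*}
Taking square roots yields the claimed bound $\frac{1}{\sqrt3}2^{-M}e^{M\rho_\Delta}$.

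The only genuinely delicate point is Step 1: justifying the $L^2$ expansion, i.e.\ completeness of $\{\ell_m\}$, so that Parseval holds with equality. If completeness is to be avoided, there is a direct alternative. Use the generating function (Lemma~\ref{lem:laguerre_generating_function}) to compute $\int\phi_z\phi_{z'}=1/(1-zz')$. Then $\norm{\phi_\sigma-1}^2=\iint \frac{1}{1-zz'}\dif\sigma\dif\sigma-1=\sum_{m>M}\mu_m^2$, expanding the geometric series using that $zz'\le 1/4$. This gives the same identity without appeal to completeness.
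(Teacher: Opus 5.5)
Your proposal is correct and follows essentially the paper's argument: the Laguerre coefficients of $\phi_\sigma$ are the moments $\int z^m\dif\sigma(z)$, which vanish for $1\le m\le M$ and are bounded by $2^{-m}\norm{\sigma}_{\mathrm{TV}}$ for $m>M$, and Parseval finishes. You rightly note that this upper bound needs Parseval's identity (completeness of $\{\ell_m\}$ in $L^2[0,1]$) rather than Bessel's inequality, a point the paper passes over by citing ``Parseval's inequality''; your kernel identity $\int_0^1\phi_z\phi_{z'}\dif r=1/(1-zz')$ gives a clean alternative that avoids completeness altogether.
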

\begin{proof}
    Combining the moment identities for $\sigma$ in Lemma~\ref{lem:matching_plain} with Lemma~\ref{lem:laguerre_beta_product}, we use Fubini's theorem to claim that
    \begin{align*}
        \int_{[0,1]}\ell_m(r)\phi_\sigma(r)\dif r=\int_{z\in [\Delta,1/2]}\int_{r\in [0,1]}\ell_m(r)\phi_z(r)\dif r\dif \sigma(z)=\int_{z\in[\Delta,1/2]}z^m \dif \sigma(z)=\delta_{m,0}
    \end{align*}
    for $m=0,1,2,\cdots,M$. For $m>M$, we have
    \begin{align*}
        \abs*{\int_{[0,1]}\ell_m(r)\phi_\sigma(r)\dif r}=\abs*{\int_{z\in[\Delta,1/2]}z^m \dif \sigma(z)}\leq 2^{-m}\norm{\sigma}_\mathrm{TV}.
    \end{align*}
    Therefore, by Parseval's inequality, we have
    \begin{align*}
        \norm{\phi_\sigma-1}_{L^2[0,1]}^2=\norm{\phi_\sigma-\ell_0}_{L^2[0,1]}^2\leq \sum_{m>M}(2^{-m}\norm{\sigma}_\mathrm{TV})^2\leq \frac{2^{-2M}}{3}\cosh^2(M\rho_\Delta).
    \end{align*}
    The desired conclusion then follows.
\end{proof}

We then move on to embed the properties in Lemma~\ref{lem:matching_plain} and Lemma~\ref{lem:almost_uniform} into Gumbel--max parameters to show Proposition~\ref{prop:matching_gumbel_pivotal}.
\begin{proof}[Proof of Proposition~\ref{prop:matching_gumbel_pivotal}]
Let $\sigma$ be as in Lemma~\ref{lem:matching_plain}. When $z\in[\Delta,1/2]$ is a rational number, there exist positive integers $a$ and $b$ such that $z=a/b$. Define
\begin{align*}
    P_z=(1-z,\underbrace{1/b,1/b,\cdots,1/b}_{\text{$a$ times}}),\quad P'_z=(\underbrace{1/b,1/b,\cdots,1/b}_{\text{$b$ times}}),
\end{align*}
and let $\Sigma_z=(\delta_{P_z}-z\delta_{P'_z})/(1-z)$. Since $z\leq 1/2$, we must have $b\geq 2$. Note that both $P_z$ and $P'_z$ sum to one by our construction. Furthermore, we have $1-z\leq 1-\Delta$ since $z\in [\Delta,1/2]$ and $1/b\leq 1/2<1-\Delta$ since $h\in(0,1/2)$. Therefore, both $P_z$ and $P'_z$ are in $\cP_\Delta^{\geq 2}$, and thus $\Sigma_z$ is a signed measure on $\cP_\Delta^{\geq 2}$ satisfying
\begin{align*}
    \Sigma_z(\cP_\Delta)=1,\quad \norm{\Sigma_z}_\mathrm{TV}=\frac{1+z}{1-z}\leq 3.
\end{align*}
Also,
\begin{align*}
    g_{\Sigma_z}=\frac{1}{1-z}(g_{P_z}-zg_{P'_z})=\phi_z.
\end{align*}
Recall from Lemma~\ref{lem:matching_plain} that $\sigma$ is supported on at most $2M+4$ points. Thus, we can write
\begin{align*}
    \sigma=\sum_{i=1}^{2M+4}\alpha_i\delta_{z_i}
\end{align*}
for some $\sum_i\abs{\alpha_i}=\norm{\sigma}_\mathrm{TV}\leq \cosh(M\rho_\Delta)$ and $z_i\in [\Delta,1/2]$. Pick a small positive threshold $\eta$ to be determined later. Define the map $\cR:[\Delta,1/2]\to [\Delta,1/2]\bigcap \bbQ$ by
\begin{align*}
    \cR(z)=\begin{dcases}
        z,\quad &z\in [\Delta,1/2]\bigcap \bbQ;\\
        \text{arbitrary rational number $a/b\in [\Delta,1/2]\bigcap\bbQ$ with $\abs{z-a/b}\leq \eta$},\quad & z\in [\Delta,1/2]\backslash \bbQ.
    \end{dcases}
\end{align*}
Consider the signed measure:
\begin{align*}
    \Sigma=\int_{[\Delta,1/2]}\Sigma_{\cR(z)}\dif \sigma(z)=\sum_{i=1}^{2M+4}\alpha_i \Sigma_{\cR(z_i)}.
\end{align*}
We have
\begin{align*}
    \Sigma(\cP_\Delta)=\int_{[\Delta,1/2]} \Sigma_{\cR(z)}(\cP_\Delta)\dif \sigma(z)=1,
\end{align*}
and
\begin{align*}
    \norm{\Sigma}_\mathrm{TV}\leq \sum_{i=1}^{2M+4}\abs{\alpha_i}\norm{\Sigma_{\cR(z_i)}}_\mathrm{TV}\leq 3\cosh(M\rho_\Delta).
\end{align*}
Furthermore, by the triangle inequality and Lemma~\ref{lem:almost_uniform},
\begin{align*}
    \norm{g_\Sigma-1}_{L^2[0,1]}\leq \norm{\phi_\sigma-1}_{L^2[0,1]}+\norm{g_\Sigma-\phi_\sigma}_{L^2[0,1]}\leq \frac{2^{-M}e^{M\rho_\Delta}}{\sqrt{3}}+\norm{g_\Sigma-\phi_\sigma}_{L^2[0,1]}.
\end{align*}
It suffices to control the last term. In fact, we have
\begin{align*}
    \norm{g_\Sigma-\phi_\sigma}_{L^2[0,1]}&=\norm*{\sum_i\alpha_i\phi_{\cR(z_i)}-\sum_i\alpha_i\phi_{z_i}}_{L^2[0,1]}\\
    &\leq \sum_{i=1}^{2M+4}\abs{\alpha_i}\cdot \norm{\phi_{\cR(z_i)}-\phi_{z_i}}_{L^2[0,1]}\\
    &\leq \sum_{i=1}^{2M+4}\abs{\alpha_i}\cdot \sqrt{3}\abs{z_i-\cR(z_i)}\\
    &\leq \sqrt{3}\cosh(M\rho_\Delta)\eta,
\end{align*}
where the second-to-last line is due to
\begin{align*}
    \norm{\phi_z-\phi_w}_{L^2[0,1]}=\abs{z-w}\cdot \sqrt{\frac{(1+zw)}{(1-z^2)(1-w^2)(1-zw)}}\leq \sqrt{3}\abs{z-w}
\end{align*}
for any $z,w\in[0,1/2]$. Therefore, by setting $\eta= 2^{-M}(1/\sqrt{3}-1/3)$, the desired conclusion is proved.
\end{proof}

\subsection{Proof of Theorem~\ref{thm:lower_bound_full}}\label{apdx:proof_lower_bound_full}
Recall the special class of NTP distributions $P^{(w,S)}$ defined in \eqref{eq:pws}, indexed by the set $\cI$ specified in \eqref{eq:index_set}.
For any $P\in\cP_\Delta^{\cW}$, we use $R_P$ (resp. $S_P$) to denote the distribution of $(U,w)\in (0,1)^{\abs{\cW}}\times \cW$ given the NTP distribution $P$ without (resp. with) the Gumbel--max watermark. For any signed measure $\Xi$ on $\cI$, we use notations
\begin{align*}
    R_\Xi=\sum_{(w,S)\in\cI}\Xi(w,S) R_{P^{(w,S)}},\quad S_\Xi=\sum_{(w,S)\in\cI}\Xi(w,S) S_{P^{(w,S)}},\quad D_\Xi=R_\Xi-S_\Xi
\end{align*}
to denote the corresponding mixture measures, and use $r_\Xi$, $s_\Xi$, and $d_\Xi$ to represent their densities, respectively. Now, we specify for each $w\in\cW$ that
\begin{align*}
    \Xi_w=\sum_{\emptyset\neq S\subseteq\cW\backslash\{w\}} (-1)^{\abs{S}+1}\delta_{(w,S)}.
\end{align*}
We have $\Xi_w(\cI)=1$ due to the identity \(\sum_{j=1}^n (-1)^{j+1}\binom{n}{j}=1\) that holds for all $n\in\bbZ_+$. We further define
\begin{align*}
    \Xi=\frac{1}{\abs{\cW}}\sum_{w\in\cW}\Xi_w.
\end{align*}
Then, we also have $\Xi(\cI)=1$. Let $\cX=(0,1)^{\abs{\cW}}\times \cW$ be the space of full observation. We have the following claim:
\begin{Lemma}
    Let $\Xi$ be as above. We have
    \begin{align*}
        \norm{d_\Xi}_{L^2(\cX)}^2\leq 2(\abs{\cW}-1)!\beta_\Delta^{\abs{\cW}-1}
    \end{align*}
\end{Lemma}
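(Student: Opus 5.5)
The plan is to compute $d_\Xi$ in closed form rather than bound it crudely. The key point is that the alternating signs in $\Xi_w$ make an inclusion--exclusion collapse, and the average over $w$ then concentrates the signal on a single rare event. Write $n=\abs{\cW}-1$ and take the reference measure on $\cX$ to be Lebesgue $\times$ counting. For any $P$ we have $r_P(u,v)=p_v$ and $s_P(u,v)=\indi\{\cS(u,P)=v\}$. For $P^{(w,S)}$ the scale factor $1+\beta_\Delta\abs{S}$ cancels inside the argmax. Hence:
\begin{itemize}
\item $\cS(u,P^{(w,S)})=w$ iff $u_w\geq u_{v'}^{1/\beta_\Delta}$ for all $v'\in S$;
\item $\cS(u,P^{(w,S)})=v\in S$ iff $u_v^{1/\beta_\Delta}\geq u_w$ and $u_v\geq u_{v'}$ for all $v'\in S\setminus\{v\}$.
\end{itemize}

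\textbf{Step 1 (inclusion--exclusion per $w$).} Summing $(-1)^{\abs{S}+1}$ over $S$, I expect
\begin{align*}
\sum_{S}(-1)^{\abs{S}+1}s_{P^{(w,S)}}(u,w)=1-\indi_{A_w}(u),\qquad A_w=\Bigl\{u_w<\min_{v'\neq w}u_{v'}^{1/\beta_\Delta}\Bigr\}.
\end{align*}
For $v\neq w$, I expect
\begin{align*}
\sum_{S\ni v}(-1)^{\abs{S}+1}s_{P^{(w,S)}}(u,v)=\indi\{u_w\le u_v^{1/\beta_\Delta}\}\prod_{v'\neq w,v}\indi\{u_v<u_{v'}\}.
\end{align*}
The $r$-parts are constants in $u$, so I will not compute them directly. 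Instead I use that Gumbel--max preserves marginals: $\int s_P(u,v)\dif u=p_v=\int r_P(u,v)\dif u$. Hence, for each $v$, the function $d_\Xi(\cdot,v)$ equals a constant minus an indicator, and the constant is exactly the mean of that indicator under uniform $U$.

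\textbf{Step 2 (averaging over $w$).} Fix $v$ and collect $K(u,v)=\indi_{A_v}(u)+\sum_{w\neq v}(\text{the indicator above})$, so that $d_\Xi(u,v)=\bigl(\E K(U,v)-K(u,v)\bigr)/\abs{\cW}$. Since $u^{1/\beta_\Delta}\le u$, the event $A_v$ forces $u_v$ to be the smallest coordinate. Similarly, the $w$-th indicator forces $u_w$ to be the smallest and $u_v$ the second smallest coordinate, so at most one term is nonzero. Writing $u_{(1)}<u_{(2)}$ for the two smallest coordinates, both pieces reduce to the same event $F=\{u_{(1)}<u_{(2)}^{1/\beta_\Delta}\}$. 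This gives
\begin{align*}
K(u,v)=\indi_F(u)\,\indi\{v\text{ attains }u_{(1)}\text{ or }u_{(2)}\}.
\end{align*}
I expect this collapse to be the main obstacle. It requires checking carefully the boundary and strictness conventions (ties have measure zero) and confirming that the two descriptions of $F$ coincide.

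\textbf{Step 3 (computing the norm).} Since a variance is at most the second moment,
\begin{align*}
\norm{d_\Xi}_{L^2(\cX)}^2=\frac{1}{\abs{\cW}^2}\sum_{v}\mathrm{Var}(K(U,v))\le\frac{1}{\abs{\cW}^2}\E\sum_v K(U,v)=\frac{2P(F)}{\abs{\cW}^2}.
\end{align*}
By symmetry over which coordinate is the minimum, and because $u_1<u_j^{1/\beta_\Delta}$ already forces $u_1<u_j$,
\begin{align*}
P(F)=\abs{\cW}\,P\Bigl(U_1<\min_{j\geq 2}U_j^{1/\beta_\Delta}\Bigr)=\abs{\cW}\int_0^1(1-x^{\beta_\Delta})^n\dif x.
\end{align*}
Substituting $y=x^{\beta_\Delta}$ gives a Beta integral:
\begin{align*}
\int_0^1(1-x^{\beta_\Delta})^n\dif x=\frac{1}{\beta_\Delta}B\Bigl(\tfrac{1}{\beta_\Delta},n+1\Bigr)=\frac{n!}{\prod_{k=1}^n(1/\beta_\Delta+k)}\le n!\,\beta_\Delta^{\,n}.
\end{align*}
Combining, $\norm{d_\Xi}^2\le 2n!\beta_\Delta^n/\abs{\cW}\le 2(\abs{\cW}-1)!\,\beta_\Delta^{\abs{\cW}-1}$, which is the claimed bound.
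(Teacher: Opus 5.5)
Your argument is correct up to one sign slip, and it takes a genuinely different route from the paper. The slip is in Step 2. By your own Step 1, the $w=v$ term contributes $s_{\Xi_v}(u,v)=1-\indi_{A_v}(u)$ to $\abs{\cW}\,s_\Xi(u,v)$. So $\indi_{A_v}$ enters with a minus sign, while the $w\neq v$ indicators enter with a plus sign. The identity $d_\Xi(u,v)=\bigl(\E K(U,v)-K(u,v)\bigr)/\abs{\cW}$ therefore holds for
\begin{align*}
K(u,v)=\indi_F(u)\bigl(\indi\{v\text{ attains }u_{(2)}\}-\indi\{v\text{ attains }u_{(1)}\}\bigr),
\end{align*}
not for the unsigned sum you wrote. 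This is harmless. $\abs{K}$ is exactly your unsigned indicator, so Step 3 goes through verbatim with $\E\sum_v K(U,v)^2=2P(F)$ in place of $\E\sum_v K(U,v)$. In fact $\E K(U,v)=0$ by exchangeability, which matches $r_\Xi\equiv 1/\abs{\cW}$. Hence $\norm{d_\Xi}_{L^2(\cX)}^2=2P(F)/\abs{\cW}^2$ exactly.

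As for the route, the paper never combines the $\Xi_w$ before squaring. It applies Cauchy--Schwarz, $\norm{d_\Xi}^2_{L^2(\cX)}\le\abs{\cW}^{-1}\sum_w\norm{d_{\Xi_w}}^2_{L^2(\cX)}$. It then evaluates each $\norm{d_{\Xi_w}}^2_{L^2(\cX)}=\pi_w(1-\pi_w)+\sum_{v\neq w}\pi_{w,v}(1-\pi_{w,v})$ using the same inclusion--exclusion identities as your Step 1, and bounds $\pi_w$ and $\pi_{w,v}$ by two separate Beta integrals.

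You instead average over $w$ first. You use the fact that, because $\beta_\Delta\le 1$, the non-constant parts of the different $d_{\Xi_w}$ sit on disjoint events, determined by which coordinate is the minimum. Everything then collapses onto the single event $F$, and only one Beta integral is needed. This yields the exact value $2\pi_w/\abs{\cW}$ with $\pi_w=\int_0^1(1-x^{\beta_\Delta})^{\abs{\cW}-1}\dif x$. That is a factor $\abs{\cW}$ better than the paper's bound, which shows that the paper's Cauchy--Schwarz step is lossy.

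The price is the order-statistics argument, which needs $\beta_\Delta\le 1$. This is guaranteed here since $\Delta\le 1/2$ in the construction, whereas the paper's per-$w$ computation does not use it. The gain only affects the $\abs{\cW}$-dependent constant in Theorem~\ref{thm:lower_bound_full}.
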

\begin{proof}
We use Cauchy--Schwarz to see that
\begin{align*}
    \norm{d_\Xi}_{L^2(\cX)}^2=\norm*{\frac{1}{\abs{\cW}}\sum_w d_{\Xi_w}}_{L^2(\cX)}^2\leq \frac{1}{\abs{\cW}}\sum_{w\in\cW}\norm{d_{\Xi_w}}_{L^2(\cX)}^2.
\end{align*}
By symmetry, it suffices to show
\begin{align*}
    \norm{d_{\Xi_w}}_{L^2(\cX)}^2\leq 2(\abs{\cW}-1)!\beta_\Delta^{\abs{\cW}-1}
\end{align*}
for an arbitrary $w\in\cW$. Note that
\begin{align*}
    \norm{d_{\Xi_w}}_{L^2(\cX)}^2=\int_{(0,1)^{\abs{\cW}}} d_{\Xi_w}^2(u,w)\dif u+\sum_{v\neq w}\int_{(0,1)^{\abs{\cW}}} d_{\Xi_w}^2(u,v)\dif u.
\end{align*}
We control these two types of terms separately.
\paragraph{Case I.}We first control the norm of $d_{\Xi_w}(u,w)$. We have
\begin{align*}
    s_{\Xi_w}(u,w)&=\sum_{\emptyset\neq S\subseteq \cW\backslash\{w\}}(-1)^{\abs{S}+1}s_{P^{(w,S)}}(u,w)\\
    &=\sum_{\emptyset\neq S\subseteq \cW\backslash\{w\}}(-1)^{\abs{S}+1}\indi\left\{\frac{\log u_w}{p^{(w,S)}_w}\geq \frac{\log u_v}{p^{(w,S)}_v},\ \forall\, v\in\cW\backslash\{w\}  \right\}\\
    &=\sum_{\emptyset\neq S\subseteq \cW\backslash\{w\}}(-1)^{\abs{S}+1}\indi\left\{u_v\leq u_w^{\beta_\Delta},\ \forall\, v\in S\right\}\\
    &=\sum_{\emptyset\neq S\subseteq \cW\backslash\{w\}}(-1)^{\abs{S}+1}\prod_{v\in S}\indi\left\{u_v\leq u_w^{\beta_\Delta}\right\}\\
    &=1-\prod_{v\in\cW\backslash\{w\}}\left(1-\indi\left\{u_v\leq u_w^{\beta_\Delta}\right\}\right)\\
    &=1-\indi\left\{u_v> u_w^{\beta_\Delta},\ \forall\, v\in \cW\backslash\{w\}\right\}\\
    &\define 1-A_w(u).
\end{align*}
Let
\begin{align*}
    \pi_w=\int_{(0,1)^{\abs{\cW}}}A_w(u)\dif u.
\end{align*}
We have
\begin{align*}
    r_{\Xi_w}(u,w)&=\sum_{\emptyset\neq S\subseteq \cW\backslash\{w\}}(-1)^{\abs{S}+1}r_{P^{(w,S)}}(u,w)\\
    &=\sum_{\emptyset\neq S\subseteq \cW\backslash\{w\}}(-1)^{\abs{S}+1}p^{(w,S)}_w\\
    &=\sum_{\emptyset\neq S\subseteq \cW\backslash\{w\}}(-1)^{\abs{S}+1}\int_{(0,1)^{\abs{\cW}}}s_{P^{(w,S)}}(u,w)\dif u\\
    &=\int_{(0,1)^{\abs{\cW}}}s_{\Xi_w}(u,w)\dif u\\
    &=1-\pi_w.
\end{align*}
Therefore, we have
\begin{align*}
    \int_{(0,1)^{\abs{\cW}}}d_{\Xi_w}^2(u,w)\dif u=\int_{(0,1)^{\abs{\cW}}}(\pi_w-A_w(u))^2\dif u=\pi_w(1-\pi_w)\leq \pi_w.
\end{align*}
Meanwhile, we have
\begin{align*}
    \pi_w=\int_{(0,1)}(1-x^{\beta_\Delta})^{\abs{\cW}-1}\dif x=\frac{(\abs{\cW}-1)!\beta_\Delta^{\abs{\cW}-1}}{\prod_{j=1}^{\abs{\cW}-1}(1+j\beta_\Delta)}\leq (\abs{\cW}-1)!\beta_\Delta^{\abs{\cW}-1}.
\end{align*}

\paragraph{Case II.}We then calculate the norm of $d_{\Xi_w}(u,v)$ for every $v\neq w$. Since only subsets $S$ that contains $v$ contribute to the sum, we have
\begin{align*}
    s_{\Xi_w}(u,v)&=\sum_{\emptyset\neq S\subseteq \cW\backslash\{w\}}(-1)^{\abs{S}+1}s_{P^{(w,S)}}(u,v)\\
    &=\sum_{v\in S\subseteq \cW\backslash\{w\}}(-1)^{\abs{S}+1}s_{P^{(w,S)}}(u,v)\\
     &=\sum_{T\subseteq \cW\backslash\{w,v\}}(-1)^{\abs{T}}s_{P^{(w,T\cup\{v\})}}(u,v)\\
     &=\sum_{T\subseteq \cW\backslash\{w,v\}}(-1)^{\abs{T}}\indi\left\{\frac{\log u_v}{p^{(w,T\cup\{v\})}_v}\geq \frac{\log u_x}{p^{(w,T\cup\{v\})}_x},\ \forall\,x\in\cW\backslash\{v\}\right\}\\
     &=\indi\left\{u_w\leq u_v^{1/\beta_\Delta}\right\}\prod_{x\in \cW\backslash\{w,v\}}\left(1-\indi\left\{u_x\leq u_v\right\}\right)\\
     &=\indi\left\{u_w\leq u_v^{1/\beta_\Delta},\ u_x>u_v,\ \forall\, x\in\cW\backslash\{w,v\}\right\}\\
     &\define B_{w,v}(u).
\end{align*}
Let
\begin{align*}
    \pi_{w,v}=\int_{(0,1)^{\abs{\cW}}}B_{w,v}(u)\dif u.
\end{align*}
We similarly have
\begin{align*}
    r_{\Xi_w}(u,w)&=\int_{(0,1)^{\abs{\cW}}}s_{\Xi_w}(u,w)\dif u=\pi_{w,v}.
\end{align*}
Therefore, we have
\begin{align*}
    \int_{(0,1)^{\abs{\cW}}}d_{\Xi_w}^2(u,v)\dif u=\int_{(0,1)^{\abs{\cW}}}(\pi_{w,v}-B_{w,v}(u))^2\dif u=\pi_{w,v}(1-\pi_{w,v})\leq \pi_{w,v}.
\end{align*}
Meanwhile, we know that
\begin{align*}
    \pi_{w,v}=\int_{(0,1)}x^{1/\beta_\Delta}(1-x)^{\abs{\cW}-1}\dif x=\frac{(\abs{\cW}-2)!\beta_\Delta^{\abs{\cW}-1}}{\prod_{j=1}^{\abs{\cW}-1}(1+j\beta_\Delta)}\leq (\abs{\cW}-2)!\beta_\Delta^{\abs{\cW}-1}.
\end{align*}
Summarizing the two cases, we conclude that
\begin{align*}
    \norm{d_{\Xi_w}}_{L^2(\cX)}^2\leq \pi_w+\sum_{v\in\cW\backslash\{w\}}\pi_{w,v}\leq 2(\abs{\cW}-1)!\beta_\Delta^{\abs{\cW}-1}.
\end{align*}
\end{proof}
We can now prove Theorem~\ref{thm:lower_bound_full}.
\begin{proof}[Proof of Theorem~\ref{thm:lower_bound_full}]
Now, let $\Lambda_0$ be the uniform distribution on $\cI$. That is, $\Lambda_0(w,S)\equiv 1/(\abs{\cW}(2^{\abs{\cW}-1}-1))$ for all $(w,S)\in\cI$. Since both $\Lambda_0$ and $\Xi$ are permutation invariant. We have
\begin{align*}
    \int_\cI P^{(w,S)}\dif \Lambda_0(w,S)=\int_\cI P^{(w,S)}\dif \Xi(w,S)=(1/\abs{W},1/\abs{\cW},\cdots,1/\abs{\cW}).
\end{align*}
As a result,
\begin{align*}
    R_\Xi=\sum_{(w,S)\in\cI}\Xi(w,S) \mathrm{Unif}(0,1)^{\abs{\cW}}\times P^{(w,S)}=\mathrm{Unif}(0,1)^{\abs{\cW}}\times\mathrm{Unif}(\cW),
\end{align*}
and thus $r_\Xi(u,w)\equiv 1/\abs{\cW}$. Similarly, we have $R_{\Lambda_0}=R_\Xi=\mathrm{Unif}^{\abs{\cW}}\times \mathrm{Unif}(\cW)$ and $r_{\Lambda_0}(u,w)\equiv 1/\abs{\cW}$.

Set $\Lambda_1=(1-2\tau)\Lambda_0+2\tau\Xi$. Then we have $\Lambda_1(\cI)=1$. To make $\Lambda_1$ a probability measure on $\cI$, we need, in addition, its positivity. That is,
\begin{align*}
    (1-2\tau)\frac{1}{\abs{\cW}(2^{\abs{\cW}-1}-1)}-2\tau \frac{1}{1+\beta_\Delta}\geq 0,
\end{align*}
which can be satisfied when
\begin{align*}
    \tau\leq \frac{1}{3\abs{\cW}(2^{\abs{\cW}-1}-1)}.
\end{align*}
Define
\begin{align*}
    Q_0=(\frac{1}{2}+\tau)R_{\Lambda_0}+(\frac{1}{2}-\tau)S_{\Lambda_0},\quad Q_1=\frac{1}{2}R_{\Lambda_1}+\frac{1}{2}S_{\Lambda_1}.
\end{align*}
Then, using $R_\Lambda=R_\Xi$, we obtain
\begin{align*}
    q_1-q_0&=r_{[(1-2\tau)\Lambda_0+2\tau \Xi]/2-(1/2+\tau)\Lambda_0}-s_{-[(1-2\tau)\Lambda_0+2\tau \Xi]/2+(1/2-\tau)\Lambda_0}\\
    &=r_{\tau \Xi-2\tau \Lambda_0}-s_{-\tau \Xi}\\
    &=\tau r_{\Xi}-2\tau r_{\Lambda_0}+\tau s_{\Xi}\\
    &=-\tau r_\Xi+\tau s_\Xi.
\end{align*}
Meanwhile, we have $q_0(u,w)\geq (1/2) r_{\Lambda_0}(u,w)\geq 1/(2\abs{\cW})$. Therefore, we have
\begin{align*}
    \chi^2(Q_1,Q_0)=\int\frac{(q_1-q_0)^2}{q_0}\leq 2\abs{\cW}\tau^2\norm{d_\Xi}^2_{L^2(\cX)}\leq 4\abs{\cW}!\tau^2\beta_\Delta^{\abs{\cW}-1}.
\end{align*}
As a result, $Q_0^{\otimes N}$ and $Q_1^{\otimes N}$ are indistinguishable when
\begin{align*}
    2\abs{\cW}!\tau^2\beta_\Delta^{\abs{\cW}-1}\leq \frac{c_\delta}{N},\quad  \tau\leq \frac{1}{3\abs{\cW}(2^{\abs{\cW}-1}-1)},
\end{align*}
which yields a lower bound of:
\begin{align*}
    \Omega_{\delta,\abs{\cW}}\left(1\wedge \sqrt{\frac{1}{N\Delta^{\abs{\cW}-1}}}\right).
\end{align*}
When $\Delta\in (1/2,1-\abs{\cW})$, we use the trivial parametric lower bound $\Omega_\delta(1/\sqrt{N})$, which can be absorbed into the previous rate.
\end{proof}

\section{Auxiliary Results}
The following three properties of Laguerre polynomials can be found in \cite{abramowitz1966handbook}.
\begin{Lemma}\label{lem:laguerre_orthogonal}
Let $\{L_m\}_{m=0}^{\infty}$ be Laguerre polynomials. We have
\begin{align*}
    \int_{(0,+\infty)} L_m(x)L_n(x)e^{-x}\dif x=\indi\{m=n\}.
\end{align*}
\end{Lemma}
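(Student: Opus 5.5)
The statement to prove is the Laguerre orthonormality relation $\int_{(0,+\infty)}L_mL_ne^{-x}\dif x=\indi\{m=n\}$. I will derive it from the Rodrigues formula
\begin{align*}
    L_n(x)=\frac{e^x}{n!}\frac{\dif^n}{\dif x^n}\left(x^ne^{-x}\right).
\end{align*}

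\textbf{Step 1: the leading coefficient.} Expanding the derivative with the Leibniz rule shows that $L_n$ is a polynomial of degree exactly $n$. Its leading coefficient is $(-1)^n/n!$. This term arises only when all $n$ derivatives fall on $e^{-x}$, which gives $x^n(-1)^ne^{-x}$ before the prefactor $e^x/n!$ is applied.

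\textbf{Step 2: orthogonality to lower powers.} Fix $n\ge 1$ and let $q$ be any polynomial of degree $k<n$. Then
\begin{align*}
    \int_0^\infty q(x)L_n(x)e^{-x}\dif x=\frac{1}{n!}\int_0^\infty q(x)\,\frac{\dif^n}{\dif x^n}\left(x^ne^{-x}\right)\dif x.
\end{align*}
Integrate by parts $n$ times. The boundary terms vanish for two reasons:
\begin{itemize}
    \item at $x=0$, each $\frac{\dif^j}{\dif x^j}(x^ne^{-x})$ with $j<n$ still carries a factor $x^{n-j}$, so it is zero there;
    \item at $x=\infty$, the factor $e^{-x}$ dominates every polynomial.
\end{itemize}
After the $n$ integrations, the integral becomes $\frac{(-1)^n}{n!}\int_0^\infty q^{(n)}(x)\,x^ne^{-x}\dif x$. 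This is zero because $q^{(n)}\equiv 0$ when $\deg q<n$.

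Now take $m\neq n$ and assume without loss of generality that $m<n$. Applying the above with $q=L_m$ gives $\int_0^\infty L_mL_ne^{-x}\dif x=0$.

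\textbf{Step 3: the normalization.} For $m=n$, write $L_n(x)=\frac{(-1)^n}{n!}x^n+r(x)$ with $\deg r<n$. By Step 2, the lower-order part $r$ contributes nothing, so only the leading term remains. Running the same $n$ integrations by parts with $q(x)=\frac{(-1)^n}{n!}x^n$ gives
\begin{align*}
    \int_0^\infty L_n^2e^{-x}\dif x=\frac{(-1)^n}{n!}\cdot\frac{(-1)^n}{n!}\int_0^\infty n!\,x^ne^{-x}\dif x=\frac{\Gamma(n+1)}{n!}=1.
\end{align*}
The case $n=0$ is immediate, since $L_0\equiv 1$ and $\int_0^\infty e^{-x}\dif x=1$.

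The main point requiring care is checking that the boundary terms vanish in Step 2, in particular at $x=0$, which relies on the factor $x^{n-j}$ surviving in each lower-order derivative. The remaining steps are routine.
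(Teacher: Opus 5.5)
Your argument is correct. The Leibniz computation of the leading coefficient $(-1)^n/n!$ holds, the boundary terms vanish (the surviving factor $x^{n-j}$ at $0$ and exponential decay at $\infty$), and the final normalization $\frac{1}{(n!)^2}\cdot n!\cdot\Gamma(n+1)=1$ checks out.

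The paper does not prove this lemma. It simply cites \cite{abramowitz1966handbook} for it, together with the generating function (Lemma~\ref{lem:laguerre_generating_function}) and the Laplace transform identity. Your Rodrigues-formula proof is therefore a genuinely self-contained, elementary argument with no convergence issues. It does tacitly rely on the standard equivalence between Rodrigues' formula and the generating function, which is the only characterization of $L_m$ the paper actually uses.

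A route closer to the paper's own machinery is to integrate the product of two generating functions against $e^{-x}$. Since $1+\frac{t}{1-t}+\frac{s}{1-s}=\frac{1-st}{(1-t)(1-s)}$, one gets $\int_0^\infty\sum_{m,n}L_m(x)L_n(x)t^ms^ne^{-x}\dif x=\frac{1}{1-st}$, whose coefficients are exactly $\indi\{m=n\}$. This is precisely the $p=1$ case of the Gram-matrix generating function computed in the proof of Lemma~\ref{lem:bias_variance}.

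That approach needs a dominated-convergence justification for swapping the double series and the integral. This is available for small $\abs{t},\abs{s}$, which suffices to identify coefficients. In exchange, it derives orthonormality, and likewise the Laplace transform identity, from the single generating-function input.
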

\begin{Lemma}\label{lem:laguerre_generating_function}
Let $\{L_m\}_{m=0}^{\infty}$ be Laguerre polynomials. We have for every $x\in(0,+\infty)$ and $t\in(-1,1)$ that
\begin{align*}
    \sum_{m=0}^\infty L_m(x) t^m=\frac{1}{1-t}\exp\left(-\frac{tx}{1-t}\right)
\end{align*}
\end{Lemma}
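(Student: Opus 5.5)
The claim is the generating function of the Laguerre polynomials. My plan is to derive it from the Rodrigues formula
\begin{align*}
L_m(x)=\frac{e^x}{m!}\frac{\dif^m}{\dif x^m}\left(e^{-x}x^m\right),
\end{align*}
which is consistent with the normalization used in Lemma~\ref{lem:laguerre_orthogonal}. Expanding the derivative with the Leibniz rule gives the explicit form
\begin{align*}
L_m(x)=\sum_{k=0}^m\binom{m}{k}\frac{(-x)^k}{k!}.
\end{align*}
I would start from this explicit sum rather than from a Cauchy integral representation, because it reduces the claim to elementary series manipulation.

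Fix $x>0$ and $t\in(-1,1)$. Multiply the explicit form by $t^m$, sum over $m$, and exchange the order of summation:
\begin{align*}
\sum_{m=0}^\infty L_m(x)t^m=\sum_{k=0}^\infty\frac{(-x)^k}{k!}\sum_{m\ge k}\binom{m}{k}t^m.
\end{align*}
The exchange is justified by absolute convergence. Replacing every term by its absolute value gives the double series
\begin{align*}
\sum_k\frac{x^k}{k!}\sum_m\binom{m}{k}|t|^m=\frac{1}{1-|t|}\exp\left(\frac{x|t|}{1-|t|}\right)<\infty,
\end{align*}
which is the same computation carried out with $|t|$ and $+x$ in place of $t$ and $-x$, so it needs no separate argument.

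The inner sum is the negative binomial series
\begin{align*}
\sum_{m\ge k}\binom{m}{k}t^m=\frac{t^k}{(1-t)^{k+1}},\qquad |t|<1.
\end{align*}
Substituting it, the outer sum becomes
\begin{align*}
\frac{1}{1-t}\sum_k\frac{1}{k!}\left(\frac{-xt}{1-t}\right)^k=\frac{1}{1-t}\exp\left(-\frac{tx}{1-t}\right),
\end{align*}
which is the claimed identity.

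There is no real obstacle in this argument. The only points needing care are checking that the normalization of $L_m$ is the standard one, so that it matches Lemma~\ref{lem:laguerre_orthogonal}, and justifying Fubini through the absolute-value bound above. Alternatively, one can simply cite \cite{abramowitz1966handbook} for the identity, as the paper does.
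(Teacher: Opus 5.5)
Your derivation is correct, but it takes a genuinely different route from the paper. The paper gives no proof of this lemma; it simply quotes the identity from \cite{abramowitz1966handbook}, together with the orthonormality and Laplace-transform facts.

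You instead start from the Rodrigues formula and derive the explicit expansion $L_m(x)=\sum_{k=0}^m\binom{m}{k}(-x)^k/k!$. You then swap the order of summation and evaluate the inner sum with the negative binomial series. Every step checks out. The interchange is legitimate because the majorant series has nonnegative terms, so its value does not depend on the order of summation (Tonelli). Hence your closed form $\frac{1}{1-|t|}\exp\bigl(\frac{x|t|}{1-|t|}\bigr)$, computed in the swapped order, really does certify absolute convergence.

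The citation is shorter and standard. Your argument is self-contained and, more usefully here, it pins down the normalization. The $L_m$ given by the Rodrigues formula are exactly the polynomials with $L_m(0)=1$ that are orthonormal with respect to $e^{-x}$ (Lemma~\ref{lem:laguerre_orthogonal}). They also satisfy the Laplace-transform identity of Lemma~\ref{lem:laguerre_laplace}, which follows from your generating function by integrating term by term. So all three auxiliary Laguerre facts used in the paper refer to the same family.

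Your argument also shows the identity holds for every real $x$ and every complex $t$ with $|t|<1$. This covers both places the paper uses it: with $u,v\in(-1,1)$ in the proof of Lemma~\ref{lem:bias_variance}, and with $t=z\in[0,1)$ in Lemma~\ref{lem:laguerre_beta_product}.
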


\begin{Lemma}\label{lem:laguerre_laplace}
Let $L_m$ be the degree-$m$ Laguerre polynomial. Then, we have
\begin{align*}
    \int_{(0,+\infty)} e^{-zx} L_m(x)\dif x=\frac{(z-1)^m}{z^{m+1}},\quad \forall\, z:\Re(z)>0.
\end{align*}
    
\end{Lemma}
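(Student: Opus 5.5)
The statement is the Laplace-transform identity in Lemma~\ref{lem:laguerre_laplace}: $\int_0^\infty e^{-zx}L_m(x)\dif x=(z-1)^m/z^{m+1}$ for $\Re z>0$. The plan is to read it off from the generating function in Lemma~\ref{lem:laguerre_generating_function} by integrating term by term against $e^{-zx}$ and comparing power series in the generating variable $t$.

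First take $z$ real with $z>0$, and fix $t\in(-1,1)$ with $\abs{t}$ small. Multiply the generating function by $e^{-zx}$ and integrate over $(0,+\infty)$. On the right-hand side the exponent is $-x\bigl(z+t/(1-t)\bigr)$. For $z>0$ and small $\abs{t}$ the coefficient $z+t/(1-t)$ is positive, so the integral equals
\begin{align*}
    \frac{1}{1-t}\cdot\frac{1}{z+t/(1-t)}=\frac{1}{z(1-t)+t}=\frac{1}{z-t(z-1)}=\frac{1}{z}\sum_{m\geq 0}\left(\frac{z-1}{z}\right)^m t^m,
\end{align*}
where the last expansion is valid for $\abs{t}\,\abs{z-1}/\abs{z}<1$. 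Matching the coefficient of $t^m$ with $\int e^{-zx}L_m(x)\dif x$ gives exactly $(z-1)^m/z^{m+1}$.

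The one step that needs care is exchanging $\sum_m$ and $\int$. I would justify it by dominated convergence. The Laguerre polynomials satisfy the classical bound $\abs{L_m(x)}\leq e^{x/2}$ for $x\geq 0$. Hence $\sum_m \abs{L_m(x)}\,\abs{t}^m e^{-zx}\leq e^{-(z-1/2)x}/(1-\abs{t})$, which is integrable whenever $z>1/2$. Alternatively, one can avoid that bound entirely: $L_m$ is a polynomial with coefficients $\binom{m}{k}(-1)^k/k!$, so $\sum_m\abs{L_m(x)}\abs{t}^m\leq \sum_m \abs{t}^m\sum_k\binom{m}{k}x^k/k!$. This series is bounded by $\frac{1}{1-\abs{t}}\exp\bigl(\abs{t}x/(1-\abs{t})\bigr)$, which is integrable against $e^{-zx}$ once $\abs{t}$ is small enough relative to $z$. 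So for each $z>0$ the exchange is legitimate for small $\abs{t}$, and the coefficient comparison holds for all real $z>0$.

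Finally, extend to complex $z$ with $\Re z>0$ by analytic continuation. Both sides are holomorphic in the half-plane $\{\Re z>0\}$: the left-hand side is the Laplace transform of a polynomial, holomorphic by differentiating under the integral, and the right-hand side is rational with its only pole at $0$. They agree on $(0,\infty)$, so they agree on the whole half-plane by the identity theorem.

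A more direct alternative is to use the explicit formula $L_m(x)=\sum_k\binom{m}{k}(-1)^k x^k/k!$ together with $\int_0^\infty x^k e^{-zx}\dif x=k!/z^{k+1}$. This gives $\sum_k\binom{m}{k}(-1)^k z^{-k-1}=z^{-1}(1-1/z)^m$, which is valid directly for complex $\Re z>0$. I would actually present this version, since it is a finite sum and needs no interchange argument, so I expect no real obstacle in either route.
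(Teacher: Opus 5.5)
Your proof is correct, and both of your routes work. This is a genuinely different route from the paper's only in the sense that the paper gives no proof at all: the lemma is one of the auxiliary facts quoted from the Abramowitz--Stegun handbook, so you supply a self-contained derivation where the paper has only a citation.

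Your finite-sum version is the better one to present. It uses $L_m(x)=\sum_{k=0}^m\binom{m}{k}(-1)^kx^k/k!$ and $\int_0^\infty x^ke^{-zx}\dif x=k!/z^{k+1}$ for $\Re(z)>0$; the latter follows by induction on $k$ via integration by parts, with boundary terms vanishing because $\abs{e^{-zx}}=e^{-x\Re(z)}$. With these, the identity holds directly on the whole half-plane, with no interchange of sum and integral and no identity theorem. To make it rest only on facts the paper states, note that the coefficient formula can itself be read off from the generating function in Lemma~\ref{lem:laguerre_generating_function}. It uses the same identity $\sum_{m\geq k}\binom{m}{k}t^m=t^k/(1-t)^{k+1}$ that you already used in your domination step.

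Your generating-function route is closer in spirit to the paper's proof of Lemma~\ref{lem:laguerre_beta_product}, which is exactly the real-argument case of the present lemma after the substitution $z\mapsto 1/(1-z)$. Its extra costs are the domination argument and the analytic continuation. The continuation is more than the paper needs: the lemma is only invoked at real $z=1/p_{t,w}>1$ in the proof of Lemma~\ref{lem:bias_variance}, where your first bound $\abs{L_m(x)}\leq e^{x/2}$ already justifies the interchange.
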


\begin{Lemma}\label{lem:laguerre_beta_product}
    Let $L_m$ be the degree-$m$ Laguerre polynomial and $\ell_m(r)=L_m(-\log(r))$. For every $z\in [0,1)$ and $\phi_z(r)=r^{z/(1-z)}/(1-z)$, it holds that
    \begin{align*}
        \int_{[0,1]}\ell_m(r)\phi_z(r)\dif r=z^m.
    \end{align*}
\end{Lemma}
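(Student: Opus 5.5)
The identity follows from a change of variables and the Laplace transform of Laguerre polynomials (Lemma~\ref{lem:laguerre_laplace}). Substitute $r=e^{-x}$, so that $\dif r=e^{-x}\dif x$, $\ell_m(r)=L_m(x)$, and
\begin{align*}
    \phi_z(e^{-x})=\frac{1}{1-z}e^{-xz/(1-z)}.
\end{align*}
The integral then becomes
\begin{align*}
    \int_{[0,1]}\ell_m(r)\phi_z(r)\dif r=\frac{1}{1-z}\int_{(0,+\infty)}L_m(x)\exp\left(-\frac{x}{1-z}\right)\dif x,
\end{align*}
using $z/(1-z)+1=1/(1-z)$.

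Next we apply Lemma~\ref{lem:laguerre_laplace} with Laplace variable $s=1/(1-z)$. This is legitimate because $z\in[0,1)$ gives $s\geq 1$, hence $\Re s>0$. The integral equals $(s-1)^m/s^{m+1}$. Since $s-1=z/(1-z)$, this is
\begin{align*}
    \frac{(z/(1-z))^m}{(1-z)^{-(m+1)}}=z^m(1-z).
\end{align*}
Multiplying by the prefactor $1/(1-z)$ gives exactly $z^m$.

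The only points needing care are two. First, the integrals must be absolutely convergent so the substitution is justified. This holds because $|L_m|$ grows polynomially while $e^{-x/(1-z)}$ decays exponentially. Second, at $z=0$ we use the convention $0^0=1$. In that case the computation reduces to the orthonormality relation $\int L_m e^{-x}\dif x=\indi\{m=0\}$ from Lemma~\ref{lem:laguerre_orthogonal}, which is consistent with the formula.

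I expect no real obstacle. The main risk is an algebra slip in simplifying $(s-1)^m/s^{m+1}$, so I would verify the case $m=0$ separately: the integral of the density $\phi_z$ over $[0,1]$ should equal $1$.
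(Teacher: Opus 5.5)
Your proof is correct, but it takes a different route from the paper. Both start with the substitution $r=e^{-x}$. The paper then expands $\frac{1}{1-z}e^{-zx/(1-z)}=\sum_{n\ge 0}L_n(x)z^n$ using the generating function (Lemma~\ref{lem:laguerre_generating_function}), swaps sum and integral by Fubini, and uses orthonormality (Lemma~\ref{lem:laguerre_orthogonal}) to pick out the coefficient $z^m$. You instead evaluate the single integral $\frac{1}{1-z}\int_0^\infty L_m(x)e^{-x/(1-z)}\dif x$ in closed form with the Laplace transform (Lemma~\ref{lem:laguerre_laplace}) at $s=1/(1-z)\ge 1$. Your algebra $(s-1)^m/s^{m+1}=z^m(1-z)$ checks out.

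Your route is more economical. It needs only absolute integrability of a polynomial against a decaying exponential. The paper's Fubini step, by contrast, tacitly requires $\sum_n z^n\int_0^\infty\abs{L_mL_n}e^{-x}\dif x<\infty$. That holds by Cauchy--Schwarz and orthonormality since $z<1$, but the paper does not spell it out. Your route also shows that this lemma is the same computation as the bias identity in the proof of Lemma~\ref{lem:bias_variance}, under the substitution $p=1-z$.

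The paper's route, in exchange, exhibits the full expansion $\phi_z=\sum_n z^n\ell_n$, so $z^m$ appears directly as the coefficients of $\phi_z$ in the orthonormal system $\{\ell_n\}$. That is exactly the form in which the identity is used in the Parseval argument of Lemma~\ref{lem:almost_uniform}.

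Your separate treatment of $z=0$ is harmless but unnecessary. The Laplace formula at $s=1$ already gives $0^m$ with the convention $0^0=1$.
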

\begin{proof}
By change-of-variables $r=e^{-x}$ in the integral, we obtain
\begin{align*}
     \int_{[0,1]}\ell_m(r)\phi_z(r)\dif r&=\int_{(0,+\infty)}L_m(x)\cdot \frac{1}{1-z}e^{-zx/(1-z)}\cdot e^{-x}\dif x\\
     &=\int_{(0,+\infty)}L_m(x)\sum_{n=0}^\infty L_n(x)z^n\cdot e^{-x}\dif x\\
     &=\sum_{n=0}^\infty z^n\int_{(0,+\infty)}L_m(x)L_n(x)e^{-x}\dif x\\
     &=z^m,
\end{align*}
where we have used Lemma~\ref{lem:laguerre_generating_function} and Fubini's theorem.
\end{proof}

\begin{Lemma}[Cauchy's estimate, see \cite{stein2010complex} Chapter 2 Corollary 4.3]\label{lem:cauchy_estimate}
Let $f$ be a holomorphic function in the open disk $\bbD(0,r)=\{z\in\bbC:\, \abs{z}< r\}$. Suppose that there exists a $A\in[0,+\infty)$ such that $\abs{f(z)}\leq A$ for all $z\in\bbD(0,r)$. Then, for each $m\in\bbN$,
\begin{align*}
    \abs{f^{(m)}(0)}\leq \frac{m!}{r^m}A.
\end{align*}
\end{Lemma}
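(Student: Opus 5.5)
This is the standard Cauchy estimate, and I will prove it through the Cauchy integral formula on circles strictly inside the disk.

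Fix $m\in\bbN$ and any radius $s\in(0,r)$. Since $f$ is holomorphic on $\bbD(0,r)$, it is holomorphic on a neighborhood of the closed disk $\{\abs{z}\le s\}$. The Cauchy integral formula for derivatives therefore applies there and gives
\begin{align*}
    f^{(m)}(0)=\frac{m!}{2\pi\im}\oint_{\abs{\zeta}=s}\frac{f(\zeta)}{\zeta^{m+1}}\dif\zeta .
\end{align*}

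Next I bound the integral by the standard length-times-maximum estimate. On the circle we have $\abs{f(\zeta)}\le A$, because the circle lies in the open disk where the bound is assumed. We also have $\abs{\zeta^{m+1}}=s^{m+1}$, and the circle has length $2\pi s$. Combining these gives
\begin{align*}
    \abs{f^{(m)}(0)}\le \frac{m!}{2\pi}\cdot\frac{A}{s^{m+1}}\cdot 2\pi s=\frac{m!A}{s^m}.
\end{align*}

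This holds for every $s<r$, so letting $s\uparrow r$ yields $\abs{f^{(m)}(0)}\le m!A/r^m$.

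The only point needing care is this limiting step. The hypothesis bounds $f$ only on the open disk, and $f$ need not extend to the circle $\abs{z}=r$. That is why I do not integrate on $\abs{\zeta}=r$ directly but take circles of radius $s<r$ and pass to the limit, which is legitimate because the right-hand side $m!A/s^m$ is continuous in $s$. For $m=0$ the statement reduces to $\abs{f(0)}\le A$, which is immediate from the hypothesis.
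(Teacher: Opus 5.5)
Your proof is correct and is the standard argument: the paper gives no proof of its own and simply cites Stein--Shakarchi, whose proof is this same Cauchy-integral-formula bound on a circle. Since the hypothesis bounds $f$ only on the open disk, your use of circles of radius $s<r$ followed by $s\uparrow r$ is exactly the right way to reach the stated form.
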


\begin{Lemma}[Hadamard three-lines theorem, see \cite{stein2010complex} Chapter 4 Problem 3]\label{lem:hadamard}
Let $a<b$ be any two real numbers. Let $f:\bbC\to\bbC$ be a bounded function on the strip $\{z\in\bbC:a\leq \Re z\leq b\}$, holomorphic in the interior and continuous on the whole strip. Define the function $G(x)=\sup_{z:\Re z=x}\abs{f(z)}$ for $x\in[a,b]$. Then, for any $x=ta+(1-t)b$ with $t\in[0,1]$, we have
\begin{align*}
    G(x)\leq G(a)^t G(b)^{1-t}.
\end{align*}
    
\end{Lemma}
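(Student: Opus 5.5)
The plan is the classical one: multiply $f$ by an explicit zero-free holomorphic function that normalizes the two boundary suprema to $1$. Then show the resulting bounded function is at most $1$ in modulus on the whole strip, via a maximum-principle argument with a Gaussian damping factor. Throughout, write $S=\{z:\ a\le \Re z\le b\}$ and let $K=\sup_{S}\abs{f}<\infty$.

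\emph{Step 1 (normalization).} To avoid zero boundary suprema, fix $\eta>0$ and set $G_a=G(a)+\eta>0$, $G_b=G(b)+\eta>0$. Define
\begin{align*}
    h(z)=G_a^{-(b-z)/(b-a)}\,G_b^{-(z-a)/(b-a)},\qquad F(z)=f(z)h(z),
\end{align*}
using real logarithms of $G_a,G_b$, so that $h$ is entire and never vanishes. On the line $\Re z=x$ with $x=ta+(1-t)b$, i.e.\ $t=(b-x)/(b-a)$, the modulus is constant: $\abs{h(z)}=G_a^{-t}G_b^{-(1-t)}$. Hence $F$ is bounded on $S$, holomorphic in the interior, continuous on $S$, and satisfies $\abs{F}\le 1$ on both boundary lines. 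It then suffices to show $\abs{F}\le 1$ on all of $S$. Indeed, this gives $G(x)\le G_a^tG_b^{1-t}$, and letting $\eta\to0$ yields the claim, including the degenerate cases $G(a)=0$ or $G(b)=0$.

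\emph{Step 2 (maximum principle with damping).} This is the only real obstacle: the strip is unbounded, so the maximum principle cannot be applied directly. For $\epsilon>0$ let $F_\epsilon(z)=F(z)e^{\epsilon (z-c)^2}$ with $c=(a+b)/2$. Writing $z-c=s+\im y$ with $\abs{s}\le (b-a)/2$, we get
\begin{align*}
    \abs{e^{\epsilon(z-c)^2}}=e^{\epsilon(s^2-y^2)}\le e^{\epsilon (b-a)^2/4}\,e^{-\epsilon y^2}.
\end{align*}
Consequently $\abs{F_\epsilon}\le e^{\epsilon(b-a)^2/4}$ on the boundary lines, and $\abs{F_\epsilon}\to0$ as $\abs{\Im z}\to\infty$ uniformly in $S$, because $F$ is bounded.

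Now take the rectangle $S\cap\{\abs{\Im z}\le Y\}$ with $Y$ large enough that $\abs{F_\epsilon}\le e^{\epsilon(b-a)^2/4}$ on its horizontal sides. The maximum principle for this compact region gives $\abs{F_\epsilon}\le e^{\epsilon(b-a)^2/4}$ inside the rectangle. Letting $Y\to\infty$ extends this to all of $S$. Hence $\abs{F(z)}\le e^{\epsilon(b-a)^2/4}e^{-\epsilon\Re((z-c)^2)}$ for each fixed $z\in S$, and sending $\epsilon\to0$ gives $\abs{F(z)}\le 1$.

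\emph{Step 3 (conclusion).} Fix $x=ta+(1-t)b$. On $\Re z=x$ we have $\abs{f(z)}=\abs{F(z)}/\abs{h(z)}\le G_a^{t}G_b^{1-t}$. Taking the supremum over the line gives $G(x)\le (G(a)+\eta)^t(G(b)+\eta)^{1-t}$, and letting $\eta\downarrow0$ completes the proof.
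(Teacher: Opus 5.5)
Your proof is correct and complete: the nonvanishing normalizing factor $h$, the Gaussian damping $e^{\epsilon(z-c)^2}$ with the maximum principle on truncated rectangles, and the limits $\epsilon\to0$ and $\eta\to0$ all check out, and the $\eta$-limit also covers the degenerate cases $G(a)=0$ or $G(b)=0$. The paper does not prove this lemma itself and only cites the textbook exercise, so there is nothing to compare against; your argument is the standard proof behind that citation.
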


\end{sloppypar}

\end{document}